\newcommandx{\jow}[2][1=]{\todo[linecolor=orange,backgroundcolor=orange!25,bordercolor=orange,#1]{#2}}
\newcommandx{\mateus}[2][1=]{\todo[linecolor=blue,backgroundcolor=blue!25,bordercolor=blue,#1]{#2}}
\def\R{\mathbb{R}}
\def\N{\mathbb{N}}
\def\Z{\mathbb{Z}}
\newcommand{\eps}{\varepsilon}
\newcommand{\mmd}{\mathrm{d}}
\newcommand{\Xstmax}{X^s_T(\text{max})}
\newcommand{\Xstsmooth}{X^s_T(\text{smooth})}
\newcommand{\Xstsobolev}{X^s_T(\text{Sobolev})}
\newcommand{\Xststrichartz}{X^s_T(\text{Strichartz})}
\newcommandx{\eq}{\approxeq}
\newtheorem{theorem}{Theorem}
\newtheorem{corollary}[theorem]{Corollary}
\newtheorem{remark}[theorem]{Remark}
\newtheorem{proposition}[theorem]{Proposition}
\newtheorem{lemma}[theorem]{Lemma}
\def\Xint#1{\mathchoice
{\XXint\displaystyle\textstyle{#1}}%
{\XXint\textstyle\scriptstyle{#1}}%
{\XXint\scriptstyle\scriptscriptstyle{#1}}%
{\XXint\scriptscriptstyle\scriptscriptstyle{#1}}%
\!\int}
\def\XXint#1#2#3{{\setbox0=\hbox{$#1{#2#3}{\int}$}
\vcenter{\hbox{$#2#3$}}\kern-.5\wd0}}
\def\dashint{\Xint-}
\numberwithin{equation}{section}
\numberwithin{theorem}{section}
\begin{document}
\title[$L^2$ critical gZK equation in 3D]{The Cauchy problem for the $L^2-$critical generalized Zakharov-Kuznetsov equation in dimension 3}
\author{Felipe Linares}
\address[F. Linares]{IMPA\\
Instituto Matem\'atica Pura e Aplicada\\
Estrada Dona Castorina 110\\
22460-320, Rio de Janeiro, RJ\\Brazil}
\email{linares@impa.br} 
\author{Jo\~ao P.G. Ramos}
\address[J.P. Ramos]{IMPA\\
Instituto Matem\'atica Pura e Aplicada\\
Estrada Dona Castorina 110\\
22460-320, Rio de Janeiro, RJ\\Brazil}
\email{joaopgramos95@gmail.com}

\keywords{Zakharov-Kuznetsov equation,  well-posedness,  low regularity}
\subjclass{Primary: 35Q53. Secondary: 35B05}

\begin{abstract}
We prove local well-posedness for the $L^2$ critical generalized Zakharov-Kuznetsov equation in $H^s, \, s \in (3/4,1).$ We also prove that the equation is ``almost well-posedness'' for
initial data $u_0 \in H^s, \, s \in [1,2),$ in the sense that the solution belongs to a certain intersection $C([0,T]\colon H^s(\R^3)) \cap X^s_T$ and is unique within that class, 
where we can ensure continuity of the data-to-solution map in an only slightly larger space. We also prove that solutions satisfy the expected conservation of $L^2-$mass for the whole $s \in (3/4,2)$ range, 
and energy for $s \in (1,2).$ By a limiting argument, this implies, in particular, global existence for small initial data in $H^1.$ Finally, we study the question of almost everywhere (a.e.) convergence of solutions of the initial value problem to initial data.
\end{abstract}
\maketitle

\section{Introduction}

In this paper we will study the initial value problem (IVP) associated to the  $L^2$ critical generalized Zakharov-Kuznetsov equation in
dimension 3, that is,
\begin{equation}\label{eq gZK 4/3}
\begin{cases}
\partial_t u + \partial_x \Delta u + \partial_x (u^{7/3}) = 0 & \text{ on } \R^3 \times \R; \cr 
u(x,0) = u_0(x) & \text{ on } \R^3. \cr
\end{cases}
\end{equation}
where $u$ is a real function and $\Delta$ denotes the Laplace operator in space variables.

The equation above is related to the family of the well-known generalized Zakharov-Kuznetsov equation
\begin{equation}\label{gen-zk}
\partial_tu+\partial_x\Delta u+u^k\partial_xu=0, \hskip10pt \text{ on }\; \R^d \times \R, \hskip5pt k\in \Z^{+},\\
\end{equation}
where $d=2, 3$.

The equation above, for $k=1$, arises in the  context of plasma physics, where it was formally derived by Zakharov and Kuznetsov
\cite{ZakharovKuznetsov}  as a long wave small-amplitude limit of the Euler-Poisson system in the ``cold plasma'' approximation. This formal long-wave 
limit was rigorously justified by  Lannes, Linares and Saut   in \cite{LannesLinaresSaut}  (see also \cite{H} for derivation in a different context). 

We also notice that the generalized Zakharov-Kuznetsov equation \eqref{gen-zk} is not completely integrable,
but it has a Hamiltonian structure and possesses three invariants, namely,
\begin{equation*}
	I(t)=\int_{\R^d} u(x,t)\,dx=I(0),
\end{equation*}
\begin{equation*}
	M(t)=\int_{\R^d}u^2(x,t)\,dx= M(0),
\end{equation*}
and
\begin{equation*}
E(t)=\int_{\R^d}\Big(|\nabla u|^2-\frac{2u^{k+1}}{(k+1)(k+2)}\Big)dx=E(0),
\end{equation*}
which are well known to be important in order to obtain a priori estimates that allows one to extend solutions of the IVP globally.

\vspace{3mm}

In addition, a scaling argument suggests that in order to obtain local well-posedness in $H^s(\R^d)$  for the IVP associated to equation \eqref{gen-zk} it is 
required to have $s \ge s_k= \frac{d}{2}-\frac{2}{k}$.

\vspace{3mm}

There is an extensive literature addressing the issue of local well-posedness of the IVP associated to the equation \eqref{gen-zk}.
For $k=1$ and 2D, it starts with the work of Faminskii \cite{Faminskii}, where the local theory for initial data in the Sobolev spaces $H^s(\R^2)$,
$s\ge1,$ is established. The method of proof used there combine smoothing estimates and a contraction mapping principle. We mention also some
subsequent extensions obtained by Linares and Pastor in \cite{LPa}, refining this method with inspiration in the results for the Korteweg-de Vries equation
\begin{equation}\label{kdv}
\partial_tu+\partial_x^3u+u\partial_xu=0, \hskip10pt \text{ on } \; \R \times \R
\end{equation}
by Kenig, Ponce and Vega \cite{KPV1}.  A major improvement was obtained by Molinet and Pilod \cite{MolinetPilod} and
Gr\"unrock and Herr \cite{GruenrockHerr}, in both cases using the Fourier restriction method, where the authors prove local well-posedness 
in $H^s(\R^2)$, $s>1/2$. We notice that the aforementioned results allow to extend the local solutions
globally in $H^s(\R^2), s\ge 1$. Recently, Kinoshita \cite{Kinoshita1} obtained the best possible result attainable with the method of contraction
principle. He proves a local theory in $H^s(\R^2)$, $s>-1/4$,  by implementing sharp bilinear estimates and using a rather
refined analysis. Kinoshita's result implies global well-posedness in $H^s(\R^2)$, $s\ge 0$. In the 2D case, the scale argument suggests local well-posedness results for \eqref{gen-zk} should hold for data in $H^s(\R^2)$, 
with $s>s_k=1-2/k$.  In that regard, sharp local results were obtained by Ribaud and Vento \cite{RV1} for $k\ge 4$.   
In \cite{Gru2}, Gr\"unrock proved the local well-posedness for \eqref{gen-zk} in the case $k=3$ in $H^s(\R^2)$, $s>1/3$.
For the particular nonlinearity $k=2$, also called modified Zakharov-Kuznetsov equation (mZK), for which  
$L^2(\R^2)$ is the critical space suggested by the scale, Kinoshita \cite{Kinoshita2} showed that the best possible result is
$\dot{H}^{s}(\R^2)$, $s\ge 1/4$, complementing the Result by Ribaud and Vento \cite{RV1} that proves local well-posedness in $H^s(\R^2)$, $s>1/4$. In addition to that, recently Bhattacharya, Farah and Roudenko  \cite{BFR} proved global well-posedness for the same 
modified Zakharov--Kuznetsov equation in 2D if $s >3/4$. For more related results see also \cite{BL}, \cite{LPa2}, and \cite{FLPa}.

Regarding the 3D problem and $k=1$, Ribaud and Vento \cite{RV2} showed local well-posedness in $H^s(\R^3)$, 
$s>1$ (see also \cite{LSaut}). Global well-posedness was proved by Molinet and Pilod \cite{MolinetPilod} in the same 
spaces. Herr and Kinoshita in \cite{HK} establish an optimal local well-posedness result by using Picard iteration method, 
for data in $H^s(\R^3)$, $s>-1/2$ and so obtaining global well-posedness in $H^s(\R^3)$, $s\ge 0$. For $k\ge 3$, results 
by Gr\"unrock  \cite{Gru2} establish local well-posedness  for data in $H^s(\R^3)$  and 
$s>3/2-2/k$. The case $k=2$ was particularly addressed by both Gr\"unrock \cite{Gru1} and Kinoshita \cite{Kinoshita2}, 
where both authors show that the IVP \eqref{gen-zk} is locally well-posed in $H^{s}(\R^3)$ for $s > 1/2.$ Kinoshita additionally shows that we can push the former result up 
to the endpoint, and global existence holds in the whole $s \ge 1/2$ range.

We notice that in the 3D case the scale of the equation in \eqref{gen-zk} does not attain $L^2$ for any power $k\in \Z^{+}$, differently
from the case in 2D. The natural question that arises is whether one can have a meaningful equation generalizing the ZK equation where we can
expect local well-posedness in $L^2$.  Using the scale and letting $k$ in \eqref{gen-zk} vary through the reals we find the equation \eqref{eq gZK 4/3}, 
where we consider real initial data $u_0,$ for which we investigate the \emph{real solutions} of \eqref{eq gZK 4/3}.

The equation in \eqref{eq gZK 4/3} is called $L^2$-critical not only because of $L^2$ being the largest resolution space 
for the IVP  \eqref{eq gZK 4/3}, but also due to the fact that the nonlinearity is the \lq\lq borderline" case that imposes restrictions on the size of the data in 
$L^2$ in order to obtain global solutions in $H^1(\R^3)$. In this case, the energy and the mass are invariant (See \eqref{eq l^2 norm}, 
\eqref{eq energy}). In addition, in the study of the existence and stability of ground state solutions to the equation in \eqref{gen-zk} (see \eqref{ground} below) it
is known that for $k<4/3$ the ground states are stable and are unstable if $k>4/3$ (see \cite{deBouard}).  The case $k=4/3$ is conjectured to be unstable. 

Recently, Farah, Holmes, Roudenko and Yang in \cite{FHRY}  showed that the solutions of the IVP associated to the 2D  $L^2$-critical
equation in \eqref{gen-zk}  blow-up in finite or infinite time.  For ground states theory in the 2D case see \cite{FHR} and \cite{CMPS}.

The facts above motivate our interest in the study of the IVP \eqref{eq gZK 4/3}. Our main goal here is to establish local and global
well-posedness for the IVP \eqref{eq gZK 4/3}. Nevertheless, the techniques above described do not apply directly in the analysis we will implement to prove the local theory.

Our main result is stated as follows.

\begin{theorem}\label{thm-lwp}  Let $u_0 \in H^s, \, s \in (3/4,1)$. Then there are function spaces $X^s_T$ so that 
the IVP \eqref{eq gZK 4/3} has a unique solution 
\[
u \in C([0,T] \colon H^s(\R^3)) \cap X^s_T,
\]
where $T = T(\|u_0\|_s) > 0.$ Moreover, the map $u_0 \mapsto u$ from $H^s(\R^d)$ to $C([0,T] \colon H^s(\R^3))\cap X^s_T$ is locally Lipschitz continuous. 
\end{theorem}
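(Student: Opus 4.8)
The plan is to prove Theorem~\ref{thm-lwp} by setting up a Picard iteration scheme for the Duhamel (integral) formulation of \eqref{eq gZK 4/3}, namely
\begin{equation*}
u(t) = U(t)u_0 - \int_0^t U(t-t')\,\partial_x\bigl(u(t')^{7/3}\bigr)\,\dt',
\end{equation*}
where $U(t) = \ee^{-t\partial_x\Delta}$ is the unitary group associated to the linear Zakharov--Kuznetsov flow. First I would fix the resolution space $X^s_T$: since the nonlinearity $u^{7/3}$ is not algebraic, the Fourier restriction norm method is delicate to implement directly, so instead I would build $X^s_T$ out of the classical linear estimates available for $U(t)$ on $\R^3$ --- the sharp Kato-type local smoothing estimate $\|\partial_x^{} D_x^{1/2} U(t)u_0\|_{L^\infty_x L^2_{yt}}\lesssim \|u_0\|_{L^2}$, the associated maximal function estimates $\|U(t)u_0\|_{L^2_x L^\infty_{yt}}$ and $\|U(t)u_0\|_{L^4_{xy}L^\infty_t}$-type bounds, and the Strichartz estimates for $U(t)$ in three dimensions. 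Concretely, $X^s_T$ should be the intersection of $C([0,T]:H^s)$ with weighted versions of these Strichartz/smoothing/maximal spaces at regularity $s$, chosen so that (i) the linear map $u_0\mapsto U(t)u_0$ is bounded from $H^s$ into $X^s_T$ with norm uniform in $T\le 1$, and (ii) the Duhamel operator gains enough derivatives via local smoothing to absorb the $\partial_x$ loss in the nonlinear term.

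The core of the argument is then the nonlinear estimate: I would show that for $s\in(3/4,1)$ there is $\theta>0$ with
\begin{equation*}
\Bigl\|\int_0^t U(t-t')\,\partial_x\bigl(u(t')^{7/3}\bigr)\,\dt'\Bigr\|_{X^s_T} \lesssim T^{\theta}\,\|u\|_{X^s_T}^{7/3},
\end{equation*}
together with the analogous Lipschitz (difference) estimate, so that a standard fixed-point argument on a small ball of $X^s_T$ produces a unique solution and a locally Lipschitz data-to-solution map once $T = T(\|u_0\|_s)$ is chosen small. The derivative in front of the nonlinearity is handled by the local smoothing estimate, which gains one full derivative; the remaining fractional derivative $D_x^s$ (or $D^s$, meaning $D_x^s D_y^s$) must be distributed over the seven-thirds power of $u$. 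Here I would use a fractional Leibniz rule (Kenig--Ponce--Vega / Kato--Ponce type) to write $D^s(u^{7/3})\sim u^{4/3}D^s u$ plus lower-order commutator terms, then bound $u^{4/3}$ in an appropriate maximal-function/$L^\infty$-flavoured space and $D^s u$ in the smoothing or Strichartz space; the exponent $7/3$ (rather than an integer) is exactly what forces the use of the fractional chain rule for the map $x\mapsto |x|^{7/3}\operatorname{sgn}(x)$, which is $C^{1,1/3}$ — smooth enough for the chain rule to apply in the range $s<1+1/3$, and in particular throughout $s\in(3/4,1)$. The gain of a small power $T^\theta$ comes from Hölder in the time variable against the (finite-in-time) Strichartz/maximal norms.

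The main obstacle I expect is the nonlinear estimate at the lower end of the range, i.e.\ pushing $s$ down toward $3/4$: one must balance how many of the seven copies of $u$ go into $L^\infty$-type (maximal function) spaces, which are the expensive estimates in $3$D and typically cost $3/4+$ derivatives, against keeping one copy in the smoothing space carrying $D^s u$. Counting derivatives in three dimensions, the maximal function estimate $\|U(t)u_0\|_{L^2_xL^\infty_{yt}}$ and its variants require regularity strictly above $3/4$, and it is precisely the interplay of this threshold with the scaling-critical exponent that pins the range to $(3/4,1)$. A secondary technical point is that, because $u^{7/3}$ is only Hölder-continuous in a neighbourhood of where $u$ vanishes, one must be slightly careful that the fractional chain rule is being applied to a genuinely $C^1$ (indeed $C^{1,1/3}$) function, so no regularization of the nonlinearity near zero is needed; this is clean as long as $s$ stays below $1+1/3$. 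Once the contraction is established, persistence of regularity and the continuity in $C([0,T]:H^s)$ follow from the same estimates applied to differences, and uniqueness holds in the full class $C([0,T]:H^s)\cap X^s_T$ by a standard argument comparing two solutions on a short time interval and iterating.
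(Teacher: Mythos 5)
Your overall architecture coincides with the paper's: both set up a Picard iteration for the Duhamel formulation in a resolution space $X^s_T$ built from the Kato smoothing estimate (which gains the full derivative lost to $\partial_x$), the $L^2_xL^\infty_{y,T}$ maximal estimate, and Gr\"unrock's $L^4$ Strichartz bound, and both aim at a contraction estimate of the form $T^\theta\|u\|_{X^s_T}^{7/3}$. Where you genuinely diverge is in the treatment of the fractional power $u^{7/3}$. You propose the classical route: fractional Leibniz/chain rule (Kenig--Ponce--Vega, Christ--Weinstein) applied to the $C^{1,1/3}$ map $x\mapsto|x|^{4/3}x$, writing $D^s(u^{7/3})\sim u^{4/3}D^su$ plus commutators. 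The paper instead avoids the chain rule entirely: it telescopes $u^{7/3}=\sum_m\bigl[(P_{m+1}u)^{7/3}-(P_mu)^{7/3}\bigr]$ and proves a bespoke ``almost-paraproduct'' bound (Lemma \ref{lemma nonlinear 1}, modelled on Deng--Germain), in which the low-frequency pieces $m<k$ gain a factor $(k-m)_+2^{m-k}$ by inserting a gradient against the frequency localization at $2^k$. The reason for this detour is exactly the point your proposal glosses over: the nonlinear term must ultimately be measured in the anisotropic norm $L^1_xL^2_{y,T}$ (dual to the smoothing estimate), and the standard fractional chain rule is not an off-the-shelf tool in such mixed $L^p_xL^q_{y,T}$ spaces --- the paper's lemma is explicitly advertised as an \emph{anisotropic} substitute for it. So your plan is morally sound but would require you to either prove a mixed-norm fractional chain rule for $C^{1,1/3}$ nonlinearities or reorganize the estimate so that the derivative distribution happens in isotropic Lebesgue spaces; neither is routine, and this is where the real work of the paper lives.

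Two smaller discrepancies. First, your accounting of the threshold $s>3/4$ attributes it to the cost of the maximal function estimate; in the paper the $L^2_xL^\infty_{y,T}$ maximal bound actually costs $1^+$ derivatives, and the $3/4$ threshold emerges from the Bernstein/interpolation bookkeeping in the high-frequency sum (convergence of $\sum_j 2^{[(12-5p_1)/(8p_1)-(s+2\theta-1-\eps)]j}$ as $p_1\to 12/5$). Second, your heuristic that the chain rule works up to $s<4/3$ obscures the genuine obstruction at $s=1$: in the paper's scheme the low-frequency gain $2^{m-k}$ yields a convolution kernel $j\,2^{(s-1)j}$ that is summable only for $s<1$, which is precisely why the range $[1,2)$ requires the separate ``almost well-posedness'' argument of Section 4. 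Neither point invalidates your proposal for $s\in(3/4,1)$, but the anisotropic chain rule issue in the preceding paragraph is a genuine gap you would need to close.
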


To obtain this result we first employ the methods introduced by Ribaud and Vento \cite{RV1,RV2}, used to deal with the Zakharov-Kuznetsov equation in dimension 3 and the $k-$generalized Zakharov Kuznetsov equation, $k \in \N, k \ge 2,$ in dimension $2.$ 

In order to deal with the fact that our exponent lies strictly between $1$ and $2,$ we need to come up with a method which takes
into consideration both the approach for $k=1$ \cite{RV1} as well as that for $k=2$ \cite{Gru1}. To handle the fact that the nonlinearity is not polynomial, we need to introduce an 
almost-paraproduct decomposition of $u^{7/3}$, together with a nonlinear lemma (see Lemma \ref{lemma nonlinear 1}), in order to deal with that contribution. This nonlinear lemma can be interpreted as an anisotropic version of Proposition 2.3 in \cite{DG}. This allows us to employ the Picard 
iteration method in order to deal with $s \in (3/4,1).$

In the estimates defining the spaces $X^s_T$ to solve the IVP \eqref{eq gZK 4/3} we include a maximal function estimate which is not enough to help us get further regularity but just $s=1^{-}$ at most.  To overcome that problem we prove an \lq\lq almost well-posedness'' result, in the following sense. 
For $s \in [1,2),$ we modify the proof of the nonlinear Lemma \ref{lemma nonlinear 1} in order to obtain a priori estimates 
on the $H^s$ norm of a solution $u(t)$ of  \eqref{eq gZK 4/3}. This is achieved by, instead of using the gradient in the analysis of low frequencies, applying the Laplacian. Unfortunately, this only gives us a priori bounds, and the Duhamel integral operator would only be $1/3-$H\"older
continuous with that method, which makes us use a different method than a direct Picard iteration approach. 
The way out is to consider solutions with smooth enough initial data and lower regularity level, and use the a priori estimate we have in our hands to upgrade this solution by proving it is actually smoother. In the end, we use approximations to the original initial data in $H^s$ and functional analysis considerations to prove that the solution is, in fact, \emph{strong}, and it belongs to the same kind of Banach spaces
measuring regularity we used for the Picard iteration approach.  

\begin{theorem}\label{thm weak at} Let $u_0 \in H^s(\R^3),$ with $s \in [1,2).$ Then there is $T = T(\|u_0\|_s)$ so that the solution $u$ given by Lemma \ref{lemma well under} is the unique
that belongs to to $L^{\infty}([0,T] \colon H^s(\R^3)) \cap \tilde{X}^s_T,$ where we define $\tilde{X}^s_T = \underset{3/4 < r < s}{\bigcap} X^r_T$ and endow it with the norm 
$\| \cdot \|_{\tilde{X}^s_T}^2 =\displaystyle \int_{5/6}^s \|\cdot\|_{X^r_T}^2 \, \mmd r.$ Moreover, if $u_0, v_0 \in H^s$ satisfy that $\|u_0 - v_0\|_{H^s}$ 
is sufficiently small, then so is $\|u-v\|_{\tilde{X}^s_t}.$ 
\end{theorem}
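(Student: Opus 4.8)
The plan is to establish the three claims—existence within $L^\infty([0,T];H^s) \cap \tilde{X}^s_T$, uniqueness in that class, and continuous (indeed Lipschitz) dependence in the weaker $\tilde{X}^s_t$ norm—by leveraging the a priori $H^s$ bound promised by the modified nonlinear lemma together with the already-established local theory at lower regularity $r \in (3/4,1)$ from Theorem \ref{thm-lwp}. First, for existence: given $u_0 \in H^s$ with $s \in [1,2)$, I would approximate it by a sequence $u_0^{(n)} \in H^\infty$ (or at least $H^2$) with $u_0^{(n)} \to u_0$ in $H^s$. Each $u_0^{(n)}$ generates, via Theorem \ref{thm-lwp} applied at some regularity level $r \in (3/4,1)$, a solution $u^{(n)} \in C([0,T_n];H^r) \cap X^r_T$; the point of the smoothness of $u_0^{(n)}$ together with the ``upgrading'' argument sketched in the paper (using the a priori estimate to show the solution is in fact smoother) is that $u^{(n)}$ actually lies in $L^\infty([0,T];H^s) \cap \tilde{X}^s_T$ with $T = T(\|u_0\|_s)$ \emph{uniform in $n$}, because the a priori bound depends only on $\|u_0^{(n)}\|_s \lesssim \|u_0\|_s$. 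This uniform bound is the heart of the matter.

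Next I would pass to the limit. The sequence $u^{(n)}$ is bounded in $L^\infty([0,T];H^s) \cap \tilde{X}^s_T$, hence by weak-$\ast$ compactness (and the Banach--Alaoglu theorem in the Hilbertian space $\tilde{X}^s_T$, whose norm $\|\cdot\|_{\tilde X^s_T}^2 = \int_{5/6}^s \|\cdot\|_{X^r_T}^2\,\mmd r$ makes it a Hilbert space) a subsequence converges weakly-$\ast$ to some $u$ in the same class. To identify $u$ as a solution, I would show strong convergence of $u^{(n)}$ in a lower-regularity space: the differences $u^{(n)} - u^{(m)}$ solve the difference equation, and the Lipschitz estimate from Theorem \ref{thm-lwp} at level $r \in (3/4,1)$ gives $\|u^{(n)} - u^{(m)}\|_{X^r_T} \lesssim \|u_0^{(n)} - u_0^{(m)}\|_{H^r} \to 0$, so $(u^{(n)})$ is Cauchy in $X^r_T$ and converges strongly to $u$ there. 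Strong convergence in $X^r_T$ plus the nonlinear estimates lets me pass to the limit in the Duhamel formulation, so $u$ solves \eqref{eq gZK 4/3}; lower semicontinuity of norms under weak-$\ast$ limits gives $u \in L^\infty([0,T];H^s) \cap \tilde{X}^s_T$.

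For uniqueness within $L^\infty([0,T];H^s) \cap \tilde{X}^s_T$: any two such solutions are in particular in $X^r_T$ for every $r \in (3/4,s)$, so the uniqueness part of Theorem \ref{thm-lwp} at some fixed $r < 1$ already forces them to coincide. The continuous-dependence statement follows the same template: if $\|u_0 - v_0\|_{H^s}$ is small, approximate both by smooth data, use the uniform a priori $H^s$ bounds to keep the approximating solutions in a fixed ball of $L^\infty H^s \cap \tilde X^s_T$, and then run the difference estimate $\|u - v\|_{X^r_T} \lesssim \|u_0 - v_0\|_{H^r}$ for each $r$ in $(5/6,s)$; integrating $\mmd r$ over $[5/6,s]$ and using $\|u_0-v_0\|_{H^r} \le \|u_0 - v_0\|_{H^s}$ yields $\|u-v\|_{\tilde X^s_t} \lesssim \|u_0 - v_0\|_{H^s}$. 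The main obstacle, I expect, is the \emph{uniformity in $n$} of both the existence time $T$ and the $H^s$ bound for the approximating solutions—this is precisely where the modified nonlinear lemma with the Laplacian (rather than the gradient) applied to low frequencies must be invoked, and one has to check carefully that the a priori estimate closes on $[0,T]$ with $T$ depending only on $\|u_0\|_s$, not on the higher Sobolev norms of the regularizations; a secondary technical point is justifying the weak-$\ast$ limit lies in the time-continuous class, which may only give $L^\infty_t H^s$ a priori (hence the statement is phrased with $L^\infty$ rather than $C$).
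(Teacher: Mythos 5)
Your overall strategy is the same as the paper's: mollify the data, use the modified nonlinear lemma to get an a priori $X^s$ bound with existence time and bound depending only on $\|u_0\|_s$ (this is exactly the paper's Lemma \ref{lemma a priori} combined with Lemma \ref{lemma well under}, applied to $u_{0,\eps}=\varphi_\eps * u_0$ with the observation that one can trade a small regularity gain $\delta\sim\eps$ against the loss $\|u_{0,\eps}\|_{s+\delta}\lesssim \eps^{-\delta}\|u_0\|_s$ to keep $T$ uniform), then use the Lipschitz theory at some $\tilde s\in(3/4,1)$ to get strong convergence at low regularity, and conclude by uniqueness of the limit. The uniqueness argument via the subcritical class is also the paper's.

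There is, however, one genuine gap, in the continuous-dependence step. You invoke ``the difference estimate $\|u-v\|_{X^r_T}\lesssim\|u_0-v_0\|_{H^r}$ for each $r\in(5/6,s)$'' and integrate in $r$ to get a Lipschitz bound in $\tilde X^s_T$. No such Lipschitz estimate is available for $r\ge 1$: the difference version of the nonlinear estimate at that regularity only yields a factor $\|u-v\|^{1/3}$, i.e.\ the Duhamel operator is merely $1/3$-H\"older continuous there (the paper says this explicitly after Theorem \ref{thm smooth sol}, and it is the entire reason the result is phrased as ``almost well-posedness'' with a smallness conclusion rather than a Lipschitz one). The correct route, which the paper takes, is to interpolate the genuine Lipschitz estimate at a fixed $\tilde s\in(3/4,1)$ against the uniform $X^s$ bound; this gives control of $\|u_\eps-v_\eps\|_{X^r}$ by a \emph{positive power} of $\|u_{0,\eps}-v_{0,\eps}\|$ whose exponent degenerates as $r\to s$, forcing one to split $\int_{5/6}^{s}=\int_{5/6}^{s-\eta}+\int_{s-\eta}^{s}$ and absorb the second piece using the uniform bound times $\eta$. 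Your argument as written proves more than is true. A secondary soft spot: to place the limit in $\tilde X^s_T$ you appeal to weak-$*$ compactness and lower semicontinuity ``in the Hilbertian space $\tilde X^s_T$''; the $X^r_T$ norms are built from mixed $L^\infty_xL^2_{y,T}$ and $L^2_xL^\infty_{y,T}$ pieces, so $\tilde X^s_T$ is not a Hilbert space and lower semicontinuity of these norms along weak limits requires the Fatou/normalized-$L^N$ arguments that the paper carries out separately (in the proof of Corollary \ref{thm continuity}); the cleaner route here is again interpolation, which gives strong Cauchy-ness in every $X^r_T$ with $r<s$ and uniformly bounded norms, whence membership in $\tilde X^s_T$ follows directly.
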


This result still does not give well-posedness in its total strength. Fortunately, as a consequence of Theorem \ref{thm weak at}, we are still able to obtain that the solution 
as stated belongs to the same kind of spaces as in the $s \in (3/4,1)$ case.

\begin{corollary}\label{thm continuity} Let $u_0 \in H^s(\R^3), \, s \in [1,2)$ be as before. Then the solution $u$ to the IVP \eqref{eq gZK 4/3} belongs to $X^s_T.$ In particular, 
$u \in C([0,T] \colon H^s(\R^3)) \cap X^s_T.$ 
\end{corollary}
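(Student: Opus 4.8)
The plan is to feed the solution $u$ furnished by Theorem~\ref{thm weak at} back into the Duhamel formulation of \eqref{eq gZK 4/3},
\[
u(t) \;=\; U(t)u_0 \;-\; \int_0^t U(t-t')\,\partial_x\!\big(u^{7/3}(t')\big)\,\mmd t' \;=:\; U(t)u_0 \;-\; D(u)(t),
\]
where $U(t)$ denotes the unitary group associated with the linear part of \eqref{eq gZK 4/3}, and to check that each of the norms entering the definition of $X^s_T$, namely the Strichartz norm $\Xststrichartz$, the smoothing norm $\Xstsmooth$, the Sobolev norm $\Xstsobolev$ and the maximal function norm $\Xstmax$, is finite on $u$. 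The linear contribution is immediate: every family of linear estimates used to build $X^s_T$ is a statement about the free evolution that is only \emph{more} favorable at higher regularity, so $\|U(\cdot)u_0\|_{X^s_T}\lesssim \|u_0\|_{H^s}<\infty$. Hence the corollary reduces to showing that $D(u)\in X^s_T$.

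To treat $D(u)$ I would revisit the almost-paraproduct decomposition of $u^{7/3}$ underlying Lemma~\ref{lemma nonlinear 1} and split it, exactly as in the proof of Theorem~\ref{thm weak at}, according to which factor carries the output frequency. In the ``high-low'' interactions a single factor realizes, up to harmless tails, the top $\sim s$ derivatives of the output while the remaining $4/3$ factors sit at comparatively low frequency; in the ``high-high'' interactions all factors live at a common frequency and the derivatives may be distributed so that each factor is measured at a regularity level $r$ \emph{strictly} below $s$. The high-high part is then controlled by $\|u\|_{\tilde X^s_T}^{7/3}$, which is finite by Theorem~\ref{thm weak at}. For the high-low part one invokes precisely the ``Laplacian instead of gradient'' device from the proof of Theorem~\ref{thm weak at}: it lets the top derivatives be absorbed onto the distinguished factor, which is then measured only in $L^\infty_T H^s$, finite by the a priori bound of Theorem~\ref{thm weak at}, while the $4/3$ low factors are again measured in $X^r_T$ with $r<s$. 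This produces an estimate of the schematic shape
\[
\|D(u)\|_{X^s_T}\;\lesssim\;\Big(\|u\|_{L^\infty_T H^s}+\|u\|_{\tilde X^s_T}\Big)\,\|u\|_{\tilde X^s_T}^{4/3},
\]
whose right-hand side is finite; hence $D(u)\in X^s_T$ and therefore $u\in X^s_T$. The membership $u\in C([0,T]\colon H^s(\R^3))$ then follows either because $X^s_T$ is constructed so as to embed into $C([0,T]\colon H^s(\R^3))$, or directly from the Duhamel identity, both of whose terms have by then been shown to be continuous in $t$ with values in $H^s$.

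The step I expect to be the main obstacle is the control of the $\Xstmax$ component of $D(u)$. As recalled in the introduction, the maximal function estimate built into $X^s_T$ is only effective up to $s=1^-$ and gains no regularity, so in the nonlinear term one cannot afford to route the top-derivative factor through $\Xstmax$; that factor must instead land in $\Xstsmooth$, in $\Xststrichartz$, or in $L^\infty_T H^s$, where the derivative count still closes for every $s<2$, while a derivative-free factor absorbs the $L^\infty_T H^s$ norm. Checking that the frequency interactions which actually feed the $\Xstmax$ norm of $D(u)$ are exactly of this admissible type is the delicate bookkeeping that legitimizes the schematic estimate above. Should it fail to close in one stroke, for instance if the honest inequality turns out to be triangular, of the form $\|D(u)\|_{X^s_T}\lesssim T^{\theta}\|u\|_{X^s_T}\|u\|_{\tilde X^s_T}^{4/3}$ with some $\theta>0$, the remedy is a standard continuity and bootstrap argument in time: the gain $T^{\theta}$ turns $D$ into a contraction on short subintervals, so $\|u\|_{X^s_{T'}}<\infty$ for small $T'$, and the uniform in time $H^s$ bound from Theorem~\ref{thm weak at} lets one re-run the argument from successive time slices, covering $[0,T]$ in finitely many steps of a fixed minimal length.
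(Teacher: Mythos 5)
Your strategy --- re-inserting the solution of Theorem \ref{thm weak at} into the Duhamel formula and estimating the nonlinear term directly --- is not the paper's route, and it runs into a genuine obstruction at exactly the point you flag. The nonlinear estimate here has no derivative gain: in the almost-paraproduct decomposition $\Delta_k(u^{7/3})=\sum_m \Delta_k(J_m(u))$, the piece $J_m(u)$ contains exactly \emph{one} factor localized at frequency $2^m$, the remaining $4/3$ factors being low-frequency and carrying no weight (they are only measured in the unweighted mixed norm $L$). For $m\ge k$, and in particular on the diagonal $m\sim k$, the full weight $2^{sk}$ demanded by the output must therefore be paid by that single factor, i.e.\ one needs $\|\Delta_k u\|_{Y^s_T}$ itself; it cannot be downgraded to $\|\Delta_k u\|_{Y^r_T}$ with $r<s$ (the loss $2^{(s-r)k}$ is unsummable), nor rerouted to $L^\infty_T H^s$, because the $\Xstsmooth$ component of the output sits at regularity $s+1$ and is not controlled by $L^\infty_T H^s$. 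So your ``high-high $\lesssim\|u\|_{\tilde X^s_T}^{7/3}$'' claim fails, and the honest inequality is the triangular one you anticipate, $\|D(u)\|_{X^s_T}\lesssim T^{\delta}\|u\|_{X^s_T}\|u\|_{\tilde X^s_T}^{4/3}$. The proposed rescue by continuity/bootstrap in $T$ then does not work: a bootstrap requires knowing a priori that the quantity $\|u\|_{X^s_{T'}}$ is finite (and continuous in $T'$), which is precisely the assertion to be proved; for a function only known to lie in $\tilde X^s_T=\bigcap_{r<s}X^r_T$ the inequality reads ``$\infty\le C+\eps\cdot\infty$'' and yields nothing.

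The paper avoids this by never estimating $u$ directly. It takes the regularized solutions $u_{\eps}$ (mollified data), for which the a priori bound of Lemma \ref{lemma a priori} gives $\|u_{\eps}\|_{X^s_T}\le 2C_s\|u_0\|_s$ \emph{uniformly in} $\eps$ because each $u_{\eps}$ is already known to lie in a higher-regularity space, and then transfers the bound to the limit $u$: weak convergence in $L^2_TH^s$ passes to the Littlewood--Paley pieces, a subsequence gives a.e.\ pointwise convergence of $\Delta_k(u_{\eps})$, and Fatou's lemma --- applied after replacing the $L^\infty_{y,T}$ and $L^\infty_x$ factors by normalized $L^N$ averages over bounded sets, which increase monotonically to the supremum --- yields $\|P_Ku\|_{X^s_T}\le\liminf_{\eps\to 0}\|u_{\eps}\|_{X^s_T}$ for every $K$, hence $u\in X^s_T$. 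Continuity in $H^s$ is then read off from the Duhamel identity as you suggest. If you want to salvage a direct argument, you must either work with the approximating sequence as the paper does, or introduce a frequency truncation $P_Ku$ (each of which \emph{is} in $X^s_T$) and control the resulting commutators uniformly in $K$; the bare estimate on $u$ cannot be closed.
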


\begin{remark} In order to put this result in perspective, notice that Theorem \ref{thm-lwp} guarantees the data-to-solution map in the $s \in (3/4,1)$ context is
continuous, while in the remaining range with our current methods we can only guarantee that the solution does not lose regularity and forms a continuous curve 
in $H^s(\R^3).$ 
\end{remark}

\begin{remark} Although the range $s\in(3/4,2)$ looks reasonably far away from the scaling index $s_c = 0$, we mention that this endpoint 
cannot be included in the well-posedness theory, as least not if we demand that the data-to-solution $u_0 \mapsto u$ map is uniformly 
continuous. In Proposition \ref{illposed} it is shown that the
data-to-solution map is not uniformly continuous in $H^s(\R^3), s\le 0$.
\end{remark}

Next we would like to extend our solution globally in $H^1(\R^3)$. To do so we use the smooth solutions found in order to establish 
the previous theorem. We prove that those solutions allow us to justify the conservation of mass and energy satisfied by the
flow of the equation. More precisely, our next main result reads as follows.

\begin{theorem}\label{thm conserve} Let $u_0 \in H^s, \, s \in (3/4,2),$ and denote by $u \in C([0,T] \colon H^s(\R^3)) \cap X^s_T$ the unique solution to the IVP \eqref{eq gZK 4/3} given by Theorem \ref{thm weak at}. 
The following assertions hold: 
\begin{enumerate}  
\item[(i)] For each $t \in [0,T],$ where $T = T(\|u_0\|_s)$ is given by Theorem \ref{thm weak at}, the quantity 
\begin{equation}\label{eq l^2 norm} 
M(u(t)) = \|u(t)\|_2^2
\end{equation}
is preserved by the flow of the solution of \eqref{eq gZK 4/3}.
\item[(ii)] If $s \in (1,2),$ then, for each $t \in [0,T],$ 
the energy 
\begin{equation}\label{eq energy}
E(u(t)) = \int_{\R^3} |\nabla u(t)|^2 \,\mmd x \, \mmd y - \frac{3}{5} \int_{\R^3} u(x,y,t)^{10/3} \, \mmd x \, \mmd y 
\end{equation}
is preserved by the flow of the solution of \eqref{eq gZK 4/3}.
\end{enumerate}
\end{theorem}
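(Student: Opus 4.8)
The plan is to prove both identities first for the smooth solutions already produced in the proofs of Theorem~\ref{thm-lwp}, Theorem~\ref{thm weak at} and Lemma~\ref{lemma well under}, where the formal integration‑by‑parts manipulations are fully justified, and then to transfer them to a general solution by an approximation argument relying on the stability estimates already available in each range.

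\emph{Step 1 (the identities for smooth solutions).} Let $u_0 \in H^s(\R^3) \cap H^N(\R^3)$ with $N$ large (say $N\ge 4$). By persistence of regularity (part of the machinery behind Theorem~\ref{thm weak at} for $s\in[1,2)$, and a standard bootstrap for the $X^s_T$ theory when $s\in(3/4,1)$), the corresponding solution lies in $C([0,T]\colon H^N(\R^3))$ on $[0,T]$, $T=T(\|u_0\|_{H^s})$, and, using the equation, $t\mapsto u(t)$ is $C^1$ with values in $H^1(\R^3)$. For such $u$ all of $u,\nabla u,\Delta u,u^{7/3},u^{10/3}$ lie in $L^\infty\cap L^2$ with $x$‑derivatives in $L^1(\R^3)$, so every exact $x$‑derivative below integrates to zero. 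Differentiating the mass and integrating by parts,
\begin{equation*}
\frac{\mmd}{\mmd t}M(u(t)) \;=\; 2\int_{\R^3} u\,\partial_t u \;=\; -2\int_{\R^3} u\,\partial_x\bigl(\Delta u+u^{7/3}\bigr) \;=\; \frac{3}{5}\int_{\R^3}\partial_x\bigl(u^{10/3}\bigr) \;=\; 0 ,
\end{equation*}
using $\int u\,\partial_x\Delta u=0$ (it is a sum of partial derivatives) and $2\,(\partial_x u)\,u^{7/3}=\tfrac{3}{5}\,\partial_x(u^{10/3})$. For the energy, the constant $\tfrac35$ being exactly the one that makes the two contributions combine, and $\partial_t u=-\partial_x(\Delta u+u^{7/3})$,
\begin{equation*}
\frac{\mmd}{\mmd t}E(u(t)) \;=\; -2\int_{\R^3}\bigl(\Delta u+u^{7/3}\bigr)\,\partial_t u \;=\; \int_{\R^3}\partial_x\Bigl(\bigl(\Delta u+u^{7/3}\bigr)^2\Bigr) \;=\; 0 .
\end{equation*}
Hence $M(u(t))=M(u_0)$ for $t\in[0,T]$ and, likewise, $E(u(t))=E(u_0)$ for $t\in[0,T]$.

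\emph{Step 2 (approximation).} For general $u_0\in H^s(\R^3)$, $s\in(3/4,2)$, choose smooth data $u_0^n\in H^s\cap H^N$ with $u_0^n\to u_0$ in $H^s$ and $\sup_n\|u_0^n\|_{H^s}<\infty$ (e.g.\ by truncating frequencies); the corresponding smooth solutions $u^n$ are then all defined on a common interval $[0,T]$, $T=T(\|u_0\|_{H^s})$, and satisfy $M(u^n(t))=M(u_0^n)$ and, when $s>1$, $E(u^n(t))=E(u_0^n)$, by Step~1. One passes to the limit: when $s\in(3/4,1)$ the local Lipschitz continuity of Theorem~\ref{thm-lwp} gives $u^n\to u$ in $C([0,T]\colon H^s(\R^3))$; when $s\in[1,2)$ the solution $u$ of Theorem~\ref{thm weak at} is itself obtained as a limit of such smooth solutions, and the accompanying stability estimate (phrased through $\tilde X^s_T$, together with the linear and Duhamel bounds in the solution spaces) yields $u^n\to u$ in $C([0,T]\colon L^2(\R^3))$, and in $C([0,T]\colon H^1(\R^3))$ when $s>1$. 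Since $s\ge 0$ we also have $u_0^n\to u_0$ in $L^2$ (and in $H^1$ when $s\ge 1$). Letting $n\to\infty$ in $M(u^n(t))=M(u_0^n)$ proves~(i). For~(ii), $C([0,T]\colon H^1)$‑convergence gives $\int_{\R^3}|\nabla u^n(t)|^2\to\int_{\R^3}|\nabla u(t)|^2$ uniformly in $t$, while the embedding $H^1(\R^3)\hookrightarrow L^{10/3}(\R^3)$ together with the elementary bound $|a^{10/3}-b^{10/3}|\le C(|a|^{7/3}+|b|^{7/3})|a-b|$ and H\"older's inequality give $\int_{\R^3}(u^n(t))^{10/3}\to\int_{\R^3}u(t)^{10/3}$; the same holds at $t=0$, so $E(u^n(t))=E(u_0^n)$ passes in the limit to $E(u(t))=E(u_0)$.

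\emph{Main obstacle.} The algebraic identities of Step~1 are routine; the two substantive points are, first, that the smooth approximating solutions genuinely exist on an interval whose length depends only on $\|u_0\|_{H^s}$ and not on $\|u_0^n\|_{H^N}$ — which is exactly what the a priori $H^s$‑estimate (the modification of Lemma~\ref{lemma nonlinear 1} made for Theorem~\ref{thm weak at}) together with persistence of regularity provide — and, second, that the weaker stability statement available for $s\in[1,2)$, phrased through $\tilde X^s_T$, is still strong enough to upgrade to $C([0,T]\colon H^1(\R^3))$‑convergence of the approximating solutions when $s>1$, which is what is needed to pass to the limit in the $L^{10/3}$ term of the energy.
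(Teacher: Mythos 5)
Your overall strategy (establish the identities for regular solutions, then pass to the limit using the stability of the local theory) is the same as the paper's, and your Step~2 is essentially the paper's limiting argument. But Step~1 contains a genuine gap: you assert that data $u_0 \in H^s \cap H^N$, $N\ge 4$, produce solutions of \eqref{eq gZK 4/3} lying in $C([0,T]\colon H^N)$ by ``persistence of regularity.'' No such statement is available for this equation, and the obstruction is structural rather than technical: the nonlinearity $w \mapsto w^{7/3}$ is only finitely differentiable (its third derivative behaves like $w^{-2/3}$ near $w=0$), so the usual persistence/commutator arguments do not propagate $H^N$ regularity for large $N$. This is visible in the paper's own estimates: the a priori bound of Lemma \ref{lemma a priori} holds only for $s<2$, because the low-frequency gain $2^{\min(2,\,1/4+s^-)(m-k)}$ in Lemma \ref{lemma nonlinear 2} saturates at two derivatives --- taking a third derivative of $u^{7/3}$ produces the singular factor $u^{-2/3}$ --- and there is no well-posedness or regularity theory above $s=2$ anywhere in the paper. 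Since your energy computation pairs $\Delta u + u^{7/3}$ against $\partial_t u = -\partial_x(\Delta u + u^{7/3})$, you need roughly $u(t)\in H^{5/2}$ for the integrals and integrations by parts to make classical sense, and that regularity cannot be obtained by mollifying the data alone.

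The paper circumvents this by regularizing the \emph{equation}, not just the data: it replaces $u^{7/3}$ by $F_\eps^{7/3}(u)=\eta_\eps * (u*\eta_\eps)^{7/3}$ in \eqref{eq gZK 4/3 smooth}. The outer convolution lets all high derivatives fall on $\eta_\eps$ (at the price of $\eps$-dependent constants), so Proposition \ref{prop smooth} produces genuinely smooth solutions $u^{\eps}$, for which the integration-by-parts identities are valid and yield conservation of $M$ and of a \emph{modified} energy $E_\eps$ (Lemma \ref{lemma conserve approx}). The limit $\eps\to 0$ then requires the extra convergence statement $u^{\eps}\to u$ in $X^s_T$ (Lemma \ref{lemma sub convergence}, upgraded by interpolation for $s\in[1,2)$), which plays the role of your appeal to Lipschitz dependence on the data. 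To repair your argument you would have to either supply an $H^N$ persistence theory for \eqref{eq gZK 4/3} itself (which the non-smoothness of $w\mapsto w^{7/3}$ rules out for $N\ge 3$), or justify the energy identity directly at regularity $s<2$ by a duality/commutator argument; neither is present in your write-up.
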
 

As a consequence of this theorem we obtain our second main result which is the global existence of solutions of the IVP \eqref{eq gZK 4/3}  in $H^1:$

\begin{theorem}\label{thm global} Let $u_0 \in H^1(\R^3)$ and $u \in C([0,T] \colon H^1) \cap X^1_T$ the solution to \eqref{eq gZK 4/3}, as obtained in Theorem \ref{thm weak at} and Corollary 
\ref{thm continuity}. Then, if $\|u_0\|_2$ is sufficiently small, we have that the solution $u \in (L^{\infty} \cap C)(\R \colon H^1(\R^3)).$ 
\end{theorem}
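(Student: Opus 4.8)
The strategy is the standard globalization of $L^2$-critical equations: use the conserved mass and energy together with a Gagliardo--Nirenberg inequality to produce a uniform-in-time a priori bound on $\|u(t)\|_{H^1}$ under a smallness hypothesis on $\|u_0\|_2$, and then iterate the local theory. Two structural remarks make the iteration automatic. First, inspecting the construction behind Theorem~\ref{thm weak at} and Corollary~\ref{thm continuity}, the existence time $T$ is controlled from below by $\|u_0\|_{H^1}$ (it is non-increasing in that quantity), so an a priori bound $\sup_t\|u(t)\|_{H^1}\le R$ on the maximal interval of existence forces that interval to be all of $[0,\infty)$: one simply restarts on consecutive intervals of length $\ge T(R)>0$. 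Second, \eqref{eq gZK 4/3} is invariant under $(x,y,z,t)\mapsto(-x,y,z,-t)$, a symmetry that also preserves the function-space framework, so the backward-in-time statement follows from the forward one and one obtains $u\in C(\R\colon H^1)$.

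For the a priori bound we use the Gagliardo--Nirenberg inequality in $\R^3$,
\[
\|f\|_{L^{10/3}(\R^3)}^{10/3}\ \le\ C_{GN}\,\|\nabla f\|_{L^2(\R^3)}^{2}\,\|f\|_{L^2(\R^3)}^{4/3},
\]
whose exponents are dictated by the $L^2$-criticality of the nonlinearity in \eqref{eq energy} (the Gagliardo--Nirenberg weight on $\|\nabla f\|_2$ is $\theta=3/5$, and $\theta\cdot\tfrac{10}{3}=2$). Substituting into \eqref{eq energy} and invoking the conservation of mass \eqref{eq l^2 norm},
\[
E(u(t))\ \ge\ \|\nabla u(t)\|_2^2\Bigl(1-\tfrac35 C_{GN}\,\|u(t)\|_2^{4/3}\Bigr)\ =\ \|\nabla u(t)\|_2^2\Bigl(1-\tfrac35 C_{GN}\,\|u_0\|_2^{4/3}\Bigr).
\]
Hence, if $\|u_0\|_2^{4/3}\le \tfrac{5}{6 C_{GN}}$, the parenthetical factor is $\ge\tfrac12$, and the conservation of energy (Theorem~\ref{thm conserve}(ii)) gives $\|\nabla u(t)\|_2^2\le 2E(u(t))=2E(u_0)<\infty$, the finiteness coming from the embedding $H^1(\R^3)\hookrightarrow L^{10/3}(\R^3)$. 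Together with $\|u(t)\|_2=\|u_0\|_2$ this yields $\|u(t)\|_{H^1}^2\le\|u_0\|_2^2+2E(u_0)=:R^2$ uniformly in $t$, which by the first paragraph closes the argument — \emph{provided the conservation of energy is available at the regularity of $u$.}

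The remaining point is precisely that Theorem~\ref{thm conserve}(ii) guarantees energy conservation only for $s\in(1,2)$, while here $u_0\in H^1$. This is handled by a limiting argument, which also dovetails with the way the $H^1$ solution is itself constructed. Fix $\sigma\in(1,2)$ and choose $u_{0,n}\in H^\sigma(\R^3)$ with $u_{0,n}\to u_0$ in $H^1$; then $\|u_{0,n}\|_2\to\|u_0\|_2$ and $E(u_{0,n})\to E(u_0)$, the latter because $E$ is continuous on $H^1(\R^3)$ by the above Sobolev embedding. For $n$ large the smallness hypothesis holds for $u_{0,n}$ with a fixed threshold; since the $u_{0,n}$-solutions $u_n$ satisfy the conservation laws of Theorem~\ref{thm conserve} on their intervals of existence, the computation of the previous paragraph gives $\|u_n(t)\|_{H^1}\le R$ there, and because the $H^\sigma$ regularity of $u_n$ persists as long as $\|u_n(t)\|_{H^1}$ is controlled (this uses the a priori estimates underlying Theorem~\ref{thm weak at}), each $u_n$ is global with $\sup_{t\in\R}\|u_n(t)\|_{H^1}\le R$. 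On any compact time interval on which $u$ is defined, concatenating the $H^r$ continuous-dependence estimates of Theorem~\ref{thm-lwp} ($r\in(3/4,1)$, legitimate since $\|u(t)\|_{H^r}\le\|u(t)\|_{H^1}$ stays bounded there) yields $u_n\to u$ in $C$ of that interval into $H^r$; as $\{u_n(t)\}$ is bounded in $H^1$, it follows that $u_n(t)\rightharpoonup u(t)$ weakly in $H^1$ for each such $t$, whence $\|u(t)\|_{H^1}\le\liminf_n\|u_n(t)\|_{H^1}\le R$. Thus the a priori bound holds for $u$ on its maximal interval of existence, and by the first paragraph $u$ is global, $u\in L^\infty(\R\colon H^1)$; continuity in time and persistence in $X^1_T$ on compact subintervals are given by Corollary~\ref{thm continuity}.

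The main obstacle is not the coercivity estimate, which is the textbook $L^2$-critical computation, but the approximation step of the last paragraph: one must verify (i) that the local existence time and continuous-dependence estimates for the approximants $u_n$ and for $u$ are governed by the $H^1$ norm, so that they live on a common time interval and the uniform $H^1$ bound transfers to the limit, and (ii) that $H^\sigma$ regularity of a solution persists whenever its $H^1$ norm is a priori controlled — the persistence-of-regularity statement hidden in the a priori estimates used to prove Theorem~\ref{thm weak at}. Once these are in hand, the iteration to all positive times and the reflection argument for negative times are routine, and (as usual in the $L^2$-critical setting) the smallness threshold is in fact $\|u_0\|_2<\|Q\|_2$ with $Q$ the ground state optimizing the Gagliardo--Nirenberg inequality, though sharpness plays no role here.
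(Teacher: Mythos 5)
Your proposal is correct and follows essentially the same route as the paper: the sharp Gagliardo--Nirenberg inequality combined with conservation of mass and energy yields a uniform $H^1$ bound under the smallness condition (ultimately $\|u_0\|_2<\|\phi\|_2$ with $\phi$ the ground state), applied first to approximating solutions with data in $H^\sigma$, $\sigma\in(1,2)$, where Theorem~\ref{thm conserve}(ii) is available, and then transferred to the $H^1$ solution by the limiting/weak-convergence argument before iterating the local theory. The only cosmetic difference is that the paper takes the approximants to be the mollifications $u_{0,\eps}=u_0*\eta_\eps$ already used in the proof of Theorem~\ref{thm weak at}, rather than arbitrary $H^\sigma$ approximations.
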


Finally,  we are concerned with the question of almost everywhere (a.e.) convergence of solutions of the IVP \eqref{eq gZK 4/3} to initial data. 
We recall the problem proposed by Carleson in \cite{car} in the context of Schr\"odinger operators: for which $s \in \R$ does it hold that if $u_0\in H^s(\R^n),$  then
\begin{equation}\label{car-1}
\underset{t\to 0}{\lim}\; e^{it\Delta}u_0(x)=u_0(x) \hskip15pt {\rm for \,\, a.e.}\; x?
\end{equation}
He showed that {a.e.}\,convergence holds for $u_0\in H^{1/4}(\R^n)$, $n=1$.
Since then the study of this problem (linear and nonlinear) has gained a lot of attention, and inspired by the recent work of Compaan, Luc\`a and Staffilani \cite{CLS}, we prove that for each initial datum 
$u_0 \in H^s(\R^3), \, s > \frac{3}{4},$ we have \emph{pointwise convergence} of the flow to $u_0.$ 

\begin{theorem}\label{thm pointwise}  Let $s\in (3/4,2)$. Then, for each initial datum $u_0 \in H^s(\R^d),$ the unique solution 
$u \in C([0,T] \colon H^s(\R^3))$ to the IVP \eqref{eq gZK 4/3} given by Theorems \ref{thm-lwp} and \ref{thm weak at}  converges \emph{pointwise} to $u_0;$ i.e.,
\[
\lim_{t \to 0} u(x,t) = u_0(x), \, \text{ for a.e. } x \in \R^3.
\]
\end{theorem}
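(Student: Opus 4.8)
\emph{Proof proposal.}
The plan is to insert Duhamel's formula,
\[
u(t) \;=\; e^{-t\partial_x\Delta}u_0 \;-\; \int_0^t e^{-(t-t')\partial_x\Delta}\,\partial_x\!\big(u^{7/3}\big)(t')\,\mmd t' \;=:\; e^{-t\partial_x\Delta}u_0 \;+\; z(t),
\]
and to prove separately that $e^{-t\partial_x\Delta}u_0(x)\to u_0(x)$ and $z(x,t)\to 0$ for a.e.\ $x\in\R^3$ as $t\to 0^+$. The one genuinely analytic ingredient is the \emph{linear maximal function estimate}
\[
\Big\| \sup_{0<t<T}\big|e^{-t\partial_x\Delta}f\big| \Big\|_{L^2(B_R)} \;\lesssim_{R}\; \|f\|_{H^s}, \qquad s>3/4,
\]
valid for every ball $B_R\subset\R^3$; this is exactly (a component of) the estimate built into the norm of the spaces $X^s_T$, and it is precisely at this inequality that the restriction $s>3/4$ enters.

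For the linear term I would run the classical Stein-type maximal principle argument, in the spirit of \cite{CLS}. For Schwartz $f$ one has $e^{-t\partial_x\Delta}f\to f$ uniformly on $\R^3$ as $t\to 0^+$, directly by dominated convergence on the Fourier side. Given $u_0\in H^s$ with $s>3/4$, pick Schwartz functions $f_n\to u_0$ in $H^s$; then, for every $n$ and every $R$,
\[
\limsup_{t\to0^+}\big|e^{-t\partial_x\Delta}u_0(x)-u_0(x)\big| \;\le\; \sup_{0<t<T}\big|e^{-t\partial_x\Delta}(u_0-f_n)(x)\big| \;+\; \big|u_0(x)-f_n(x)\big|,
\]
and the right-hand side has $L^2(B_R)$-norm bounded by $C_R\|u_0-f_n\|_{H^s}\to 0$. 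Since the left-hand side does not depend on $n$, it must vanish for a.e.\ $x\in B_R$; letting $R\to\infty$ gives a.e.\ convergence on $\R^3$.

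For the Duhamel term $z$ I would exploit that, in the Ribaud--Vento scheme of \cite{RV1,RV2}, the multilinear and smoothing estimates that close the contraction in Theorems~\ref{thm-lwp} and~\ref{thm weak at}, when the dependence on the length of the time interval is kept explicit, produce a short-time gain: one obtains
\[
\Big\| \sup_{0<t'<t}\big|z(\cdot,t')\big| \Big\|_{L^2(B_R)} \;\lesssim_{R}\; t^{\theta}\,\mathcal P\big(\|u\|_{X^s_t}\big)
\]
for some $\theta>0$ and some polynomial $\mathcal P$ with $\mathcal P(0)=0$ (here one uses that $s>3/4$ is strictly subcritical, so that a H\"older gain in time is available when pairing the Duhamel integrand against the linear estimates). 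Hence this quantity tends to $0$ as $t\to 0^+$, which forces $z(x,t)\to 0$ for a.e.\ $x$. Together with the linear part this proves the theorem, and the argument is uniform in $s$ over $(3/4,2)$: for $s\in(3/4,1)$ one uses $X^s_T$ and Theorem~\ref{thm-lwp} directly, while for $s\in[1,2)$ one works with the spaces $X^r_T$, $r\in(5/6,1)$, underlying $\tilde X^s_T$, invoking Theorem~\ref{thm weak at} and Corollary~\ref{thm continuity}.

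The step I expect to be most delicate is the linear maximal estimate itself --- although it is supplied by the construction of $X^s_T$ rather than being something to prove afresh here. Via $TT^*$ it reduces to an oscillatory-integral bound which, after Fubini in the two transverse frequency variables, behaves like the one-dimensional generalized-Airy maximal estimate of Kenig--Ruiz--Vega (sharp at $s=3/4$); gluing the ``KdV-like'' $x$-direction to the better-behaved transverse directions without degrading this exponent is the point requiring care, and is the reason the pointwise statement stops at $3/4$ (in line with the failure of uniform well-posedness at $s\le 0$ recorded in Proposition~\ref{illposed}). A slightly more robust variant that treats the linear and Duhamel contributions simultaneously is to approximate $u_0$ in $H^s$ by frequency-truncated data $u_0^{(n)}\in H^{\sigma}$ with $\sigma\in(3/2,2)$: by persistence of regularity (implicit in the contraction estimates) the solutions $u^{(n)}$ exist on a common time interval and lie in $C([0,T];H^{\sigma})\hookrightarrow C([0,T];C_0(\R^3))$, hence converge pointwise everywhere to $u_0^{(n)}$, while $u^{(n)}\to u$ in some $X^r_T$ with $r\in(3/4,s]$; the same Stein argument, applied now to $u-u^{(n)}$ through the maximal-function component of $X^r_T$, yields the conclusion.
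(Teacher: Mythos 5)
Your overall architecture (Duhamel splitting, a maximal-in-time estimate at regularity $3/4^+$, and approximation by smoother/frequency-truncated solutions) is the same as the paper's, but there is a genuine gap at the single most important point: the provenance of the linear maximal estimate
\[
\Bigl\| \sup_{0<t<T}\bigl|U(t)f\bigr| \Bigr\|_{L^2(B_R)} \lesssim_{R} \|f\|_{H^s}, \qquad s>3/4 .
\]
You assert this is ``exactly (a component of) the estimate built into the norm of the spaces $X^s_T$,'' but it is not. The only maximal component of $X^s_T$ is the $L^2_x L^{\infty}_{y,T}$ norm, whose underlying linear bound (Proposition \ref{thm max est}) is $\|U(t)\Delta_k u_0\|_{L^2_xL^{\infty}_{y,T}} \lesssim 2^{(1+\eps)k}\|\Delta_k u_0\|_{L^2}$, i.e.\ it costs $1+\eps$ derivatives and is encoded in \eqref{eq norm max} with the weight $2^{(s-1-\eps)j}$. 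Localizing that estimate to a ball only yields your displayed inequality for $s>1$; for $r\in(3/4,1]$ the weight in $X^r_T(\mathrm{max})$ is negative and the norm does not control $\sup_{y,t}$ at all after summing over dyadic blocks. Consequently both your main argument and your ``more robust variant'' (controlling $u-u^{(n)}$ ``through the maximal-function component of $X^r_T$'') close only for $s>1$ and leave the range $s\in(3/4,1]$ --- which is the interesting part of the statement --- unproved. The missing ingredient is precisely what the paper imports here: Gr\"unrock's sharp $L^4_{x,y}L^{\infty}_T$ maximal estimate for $U(t)$, together with its retarded version
\[
\Bigl\| \int_0^t U(t-t')\,\partial_x \Delta_k f(t')\, \mmd t' \Bigr\|_{L^4_{x,y}L^{\infty}_T} \lesssim 2^{(3/4)^+k}\, \|\Delta_k f\|_{L^1_xL^2_{y,T}},
\]
which is a new estimate, not a consequence of the $X^s_T$ construction, and is the sole reason the threshold is $3/4$ rather than $1$. (Your $TT^*$/Kenig--Ruiz--Vega remark correctly guesses where such an estimate would come from, but you cannot simultaneously defer it to the $X^s_T$ machinery.)

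Once that estimate is granted, your scheme does go through and is essentially the paper's: the paper runs your ``robust variant,'' taking $\eta_{\eps}*f=P_{1/\eps}f$ so that the mollified solutions $u^{\eps}$ are smooth and converge to their (band-limited) data, and then shows $\|u-u^{\eps}\|_{L^4_{x,y}L^{\infty}_T}\to 0$ by combining the retarded $L^4_{x,y}L^{\infty}_T$ bound above with the nonlinear decomposition of Lemma \ref{lemma sub convergence}; the linear contribution is $C_s\|(I-P_{1/\eps})u_0\|_{H^s}\to 0$. Your Duhamel-term estimate with the $t^{\theta}$ gain is plausible in this framework (the $T^{\delta}$ factors from \S 3 supply it), but it too must be run through the $L^4_{x,y}L^{\infty}_T$ retarded estimate rather than through Proposition \ref{thm retarded max est}, for the same reason. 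As written, the proposal proves the theorem only for $s>1$.
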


We use Gr\"unrock's sharp $L^4_{x,y}$ maximal estimate together with an adaptation of our previous techniques to obtain such a result.

\subsection*{Organization} Before moving on to the proofs of our main results, we describe how the paper is organized.

In Section 2, we define the functional spaces we will work, and present linear
estimates useful to define these spaces. In Section 3, we prove Theorem \ref{thm-lwp}. In Section 4, we prove the \lq\lq almost
well-posedness result, Theorem \ref{thm weak at}. 
Finally, in Section 5, we prove the global result (Theorem \ref{thm global}), the conserved quantities (Theorem \ref{thm conserve})
and the Pointwise convergence (Theorem \ref{thm pointwise}).

\section{Preliminaries} 

\subsection{Notation and basic definitions} We use the modified Vinogradov equation $A \lesssim B$ several times to indicate that there is an absolute constant $C>0$ so that $A \le C \cdot  B.$ We also use the original Vinogradov equation
$A \ll B$ to denote that there is a (relatively) large constant $C$ with the property $A \le C \cdot B.$ We also use several times the notation $(-\Delta)^{s/2} f = \langle \nabla \rangle^s f = \mathcal{F}^{-1} ( (1+|\xi|^2+|\eta|^2)^{s/2} \widehat{f}).$ 

Throughout this manuscript, we will work with the inhomogeneous Sobolev spaces $H^s,$ which are the closure of Schwartz functions with 
respect to the norm $\|u\|_{H^s}^2 = \displaystyle\int_{\R^3} |\widehat{u}(\xi)|^2 (1+|\xi|)^{2s} \, \mmd \xi.$ Of course, these function spaces are nested, as can be trivially seen from the definition. \\

We will also use several times the notation 
$$\Delta_j(f)  = \mathcal{F}^{-1}(\varphi_j \cdot \widehat{f}), \, j >0,$$ 
where $\varphi$ is a smooth function supported in the annulus $\{z \in \R^3 \colon |z| \in [1,2]\},$ equal to one on $\{z \in \R^3 \colon |z| \in [5/4,7/4]\},$ $\varphi_j(z) = \varphi(z/2^j),$ and so that 
\[
\Delta_0(f) + \sum_{j > 0} \Delta_j(f) = f,
\]
where we define $\Delta_0(f) := \sum_{j \le 0} \mathcal{F}^{-1}(\varphi_j \widehat{f}).$ For any $j \gg 1,$ the support properties of our $\varphi$ imply that
\[
\tilde{\Delta}_j := \Delta_{j-1} + \Delta_j +\Delta_{j+1} 
\]
satisfies that $\tilde{\Delta}_j \circ \Delta_j = \Delta_j.$ We will use this identity implicitly throughout the text. 

These operators are useful due to the following Littlewood--Paley characterization of Sobolev spaces (see, e.g., \cite[Theorem~1.3.6]{GModern}): 
$f \in H^s(\R^3)$ if and only if
\[
|f|_{s}=\|\Delta_0 (f)\|_2 + \left(\sum_{j > 0} 2^{2sj}\|\Delta_j(f)\|_2^2 \right)^{1/2} < + \infty.
\]
Moreover, the left-hand side above defines an equivalent norm to the usual $\| \cdot \|_{H^s}$ norm. 
With aid of the dyadic decomposition outlined above, we define the main spaces we will use to prove well-posedness. Define, for $u \in \mathcal{S}(\R^4),$ the following norms:
\begin{equation}\label{eq norm sobolev}
\|u\|_{\Xstsobolev} = \|\Delta_0 (u)\|_{L^{\infty}_T L^2_{x,y}} +   \left(\sum_{j > 0} 2^{2sj}\|\Delta_j(u)\|_{L^{\infty}_T L^2_{x,y}}^2 \right)^{1/2},
\end{equation}
\begin{equation}\label{eq norm smooth}
\|u\|_{\Xstsmooth} =  \|\Delta_0 (u)\|_{L^{\infty}_x L^2_{y,T}} +   \left(\sum_{j > 0} 2^{2(s+1)j}\|\Delta_j(u)\|_{L^{\infty}_x L^2_{y,T}}^2 \right)^{1/2},
\end{equation}
\begin{equation}\label{eq norm max}
\|u\|_{\Xstmax} =  \|\Delta_0 (u)\|_{L^2_x L^{\infty}_{y,T}} +   \left(\sum_{j > 0} 2^{2(s-1-\eps)j}\|\Delta_j(u)\|_{L^2_x L^{\infty}_{y,T}}^2 \right)^{1/2},
\end{equation}
\begin{equation}\label{eq norm strichartz} 
\|u\|_{\Xststrichartz} = \|\Delta_0 (u)\|_{L^4_{x,y,T}} +   \left(\sum_{j > 0} 2^{2sj}\|\Delta_j(u)\|_{L^4_{x,y,T}}^2 \right)^{1/2}.
\end{equation}
We therefore define the spaces $X^s_T$ as the closure of $\mathcal{S}(\R^4)$ under the norm
\begin{equation}\label{eq norm main}
\|u\|_{X^s_T} = \|u\|_{\Xstsobolev} + \|u\|_{\Xstsmooth} + \|u\|_{\Xstmax} + \|u\|_{\Xststrichartz}.
\end{equation}
These are the main spaces which will help us prove well-posedness in our setting. We note that sometimes we will use the fact that, for $k > 0,$ we may write 
\[
\|\Delta_k(u)\|_{X^s_T} \sim \|\Delta_k(u)\|_{Y^s_T},
\]
where we let
\[
\|u\|_{Y^s_T} = \|u\|_{L^{\infty}_T H^s_{x,y}} + \| \langle \nabla \rangle^{s+1} u \|_{L^{\infty}_x L^2_{y,T}} + \|\langle \nabla \rangle^{s-1^+} u \|_{L^2_x L^{\infty}_{y,T}} + \| \langle \nabla \rangle^s u\|_{L^4_{x,y,T}}.
\]

\subsection{Linear Estimates} Let 
\begin{align}
\begin{cases}
\partial_t u + \partial_x \Delta u = 0 & \text{ on }\; \R^3 \times \R; \cr
u(x,0) = u_0(x) & \text{ on } \;\R^3, \cr
\end{cases}
\end{align}
be the linear part of \eqref{eq gZK 4/3}. We denote the solution to this problem by $U(t)u_0$
defined by
\begin{equation*}
\widehat{U(t)u_0}(\xi,\eta) = e^{it\xi(\xi^2+|\eta|^2)} \widehat{u_0}(\xi,\eta), \, \xi \in \R, \eta \in \R^{2}.
\end{equation*}
By Plancherel's theorem, this is an unitary group on $L^2.$ Besides that, the following estimates hold: 

\begin{proposition}[Kato Smoothing]\label{thm localsmoothing} For any $u_0 \in L^2(\R^3),$ it holds that 
\[
\|\nabla U(t) u_0\|_{L^{\infty}_x L^2_{y,T}} \lesssim \|u_0\|_{L^2}.
\]
\end{proposition}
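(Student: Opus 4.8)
The plan is to reduce the estimate, via Plancherel's theorem in the $y$ and $t$ variables, to a one-dimensional computation in $\xi$, where the gain is produced by a change of variables adapted to the dispersive phase $\phi_\eta(\xi) = \xi(\xi^2+|\eta|^2) = \xi^3 + |\eta|^2\xi$. It suffices to take $u_0 \in \mathcal{S}(\R^3)$ and conclude by density; moreover it is enough to bound the norm over all $t \in \R$ rather than $t \in [0,T]$, since restriction to $[0,T]$ only decreases the $L^2_{y,T}$ norm (and the constant we obtain will not even depend on $T$).

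First I would fix $x \in \R$ and write, for any first-order derivative $\partial \in \{\partial_x, \partial_{y_1}, \partial_{y_2}\}$,
\[
\partial U(t)u_0(x,y) = \int_{\R^2} e^{iy\cdot\eta}\Bigl[\int_\R m(\xi,\eta)\, e^{ix\xi}\, e^{it\phi_\eta(\xi)}\, \widehat{u_0}(\xi,\eta)\, \d\xi\Bigr]\d\eta,
\]
where $m(\xi,\eta) = i\xi$ when $\partial = \partial_x$ and $m(\xi,\eta) = i\eta_k$ when $\partial = \partial_{y_k}$. Applying Plancherel in $y$ for each fixed $(x,t)$ and then integrating in $t$ over $\R$, the quantity $\int_\R\int_{\R^2}|\partial U(t)u_0(x,y)|^2\,\d y\,\d t$ equals a constant multiple of
\[
\int_{\R^2}\int_\R\Bigl|\int_\R m(\xi,\eta)\, e^{ix\xi}\, e^{it\phi_\eta(\xi)}\, \widehat{u_0}(\xi,\eta)\, \d\xi\Bigr|^2\d t\,\d\eta.
\]

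Next, for fixed $\eta$, I would substitute $\mu = \phi_\eta(\xi)$. Since $\phi_\eta'(\xi) = 3\xi^2 + |\eta|^2$ is strictly positive for $\eta \neq 0$ (and for $\eta = 0$ is nonnegative, vanishing only at $\xi = 0$), $\phi_\eta$ is a smooth increasing bijection of $\R$ onto $\R$, and the inner integral is precisely the one-dimensional Fourier transform in $t$ of the function $\mu \mapsto \dfrac{m(\xi(\mu),\eta)}{\phi_\eta'(\xi(\mu))}\,e^{ix\xi(\mu)}\,\widehat{u_0}(\xi(\mu),\eta)$. Applying Plancherel in $t$ and changing variables back to $\xi$, the displayed quantity becomes a constant times
\[
\int_{\R^2}\int_\R\frac{|m(\xi,\eta)|^2}{3\xi^2+|\eta|^2}\, |\widehat{u_0}(\xi,\eta)|^2\,\d\xi\,\d\eta.
\]
The point is now just the elementary pointwise bounds $\dfrac{|\xi|^2}{3\xi^2+|\eta|^2}\le \tfrac13$ and $\dfrac{|\eta_k|^2}{3\xi^2+|\eta|^2}\le 1$, so in every case the multiplier in front of $|\widehat{u_0}|^2$ is bounded by an absolute constant, and the whole expression is $\lesssim \|u_0\|_{L^2}^2$ — crucially with a bound \emph{independent of $x$}, since $|e^{ix\xi}| = 1$ and $x$ never entered the estimate. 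Taking the supremum over $x$ and summing the contributions from $\partial_x, \partial_{y_1}, \partial_{y_2}$ yields $\|\nabla U(t)u_0\|_{L^\infty_x L^2_{y,T}}\lesssim \|u_0\|_{L^2}$.

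The only mildly delicate point is the change of variables $\mu = \phi_\eta(\xi)$ on the slice $\eta = 0$, where $\phi_0'$ vanishes at the origin; this is harmless since $\{\eta = 0\}$ has measure zero in $\R^2$, so Fubini lets us discard it, while for every $\eta \neq 0$ the map is a genuine diffeomorphism. Everything else is Plancherel and the two displayed scalar inequalities. Conceptually, this is the standard Kato-type local smoothing estimate for the Zakharov--Kuznetsov group, and it is exactly the anisotropic structure of the phase $\xi(\xi^2+|\eta|^2)$ — whose $\xi$-derivative dominates both $\xi^2$ and $|\eta|^2$ — that allows one full derivative to be recovered in $x$ as well as in each $y$-direction.
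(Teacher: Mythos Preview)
Your argument is correct and complete: the Plancherel reduction in $(y,t)$, the change of variables $\mu=\phi_\eta(\xi)$ for each fixed $\eta\neq 0$, and the pointwise multiplier bounds $\xi^2/(3\xi^2+|\eta|^2)\le 1/3$, $\eta_k^2/(3\xi^2+|\eta|^2)\le 1$ are exactly the right ingredients, and your handling of the $\eta=0$ slice via Fubini is fine.

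For comparison, the paper does not give a proof at all --- it simply refers to \cite[Proposition~3.1]{RV2}. The argument you wrote is precisely the standard one that appears in Ribaud--Vento, so there is no genuine methodological difference; you have just made explicit what the paper outsources to a citation.
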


\begin{proof} See, for instance, \cite[Proposition~3.1]{RV2}.
\end{proof}

\begin{proposition}[Maximal Estimate]\label{thm max est} For any $T \in (0,1),$ it holds that 
\[
\|U(t)u_0\|_{L^2_x L^{\infty}_{y,T}} \lesssim \|u_0\|_{H^s},
\]
for all $s>1$ and all $u_0 \in \mathcal{S}(\R^3).$ In particular, for the endpoint $s=1,$ it holds that 
\[
\|U(t) \Delta_k u_0\|_{L^2_x L^{\infty}_{y,T}} \lesssim k^2 \|\Delta_k u_0\|_{H^1},
\]
for all $k \ge 1.$ 
\end{proposition}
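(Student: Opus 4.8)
\textbf{Setup and reduction.} The plan is to exploit the anisotropic structure of the symbol $\xi(\xi^2+|\eta|^2)$: the $x$-variable behaves like a (one-dimensional) Airy/KdV phase while the $\eta$-variables behave like a Schrödinger phase, each shifted by the presence of the other. Since we want $L^2_x$ on the outside, I would first freeze $x$ and estimate the $L^\infty_{y,T}$ norm, then take $L^2_x$. Writing $U(t)u_0(x,y) = \int_{\R^3} e^{i(x\xi + y\cdot\eta) + it\xi(\xi^2+|\eta|^2)}\widehat{u_0}(\xi,\eta)\,\mmd\xi\,\mmd\eta$, I would decompose dyadically in the full frequency $|\xi|+|\eta| \sim 2^k$ (using the $\Delta_k$ already defined) and also Littlewood--Paley decompose separately in $\xi$ and in $\eta$, so that on each piece $|\xi|\sim 2^{k_1}$, $|\eta|\sim 2^{k_2}$ with $\max(k_1,k_2) = k + O(1)$. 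It suffices to prove the estimate for a single such block with an acceptable power of $k$ in front and then sum; since we only want $s>1$ (resp. the $k^2$ loss at $s=1$), a few powers of $k$ per block are affordable.

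\textbf{Core estimates on a dyadic block.} For the high-$\eta$ regime $|\eta|\sim 2^{k_2} \gtrsim |\xi|$, the phase in $y$ is $y\cdot\eta + t\xi|\eta|^2$ plus a lower-order $t\xi^3$ term; fixing $x$ and performing the $\xi$-integral first contributes only a bounded (in the relevant norms) factor after using the decay of the resulting oscillatory integral, and then the $\eta$-integral is a genuine $2$-dimensional Schrödinger-type propagator. Here I would invoke the standard local (in time) maximal estimate for the Schrödinger group in $\R^2$, which costs $\|u_0\|_{H^s}$ for $s>1$ — this is where the restriction $s>1$ enters, matching the known sharp threshold $s>d/4$ in dimension $d=2$. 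The $L^2_x$ norm is handled by Plancherel in $x$ after observing that the $x$-dependence only enters through a unimodular factor $e^{ix\xi}$ composed with the flow, so Fubini/Minkowski lets us put the $L^2_x$ inside. For the high-$\xi$ regime $|\xi|\sim 2^{k_1} \gtrsim |\eta|$, the $x$-phase $x\xi + t\xi^3 + t\xi|\eta|^2$ is an Airy-type phase in $\xi$; here one uses the one-dimensional Airy maximal function estimate (Kenig--Ponce--Vega), which is actually $L^2_x$-bounded with no derivative loss in the $x$-direction, and the two remaining $\eta$-integrals are estimated crudely in $L^1_\eta$ against $\|\langle\eta\rangle^s \widehat{u_0}\|$, again costing only $\|u_0\|_{H^s}$ for $s>1$ (the $\eta$-integral of $\langle\eta\rangle^{-2s}$ converges in $\R^2$ precisely when $s>1$).

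\textbf{The endpoint $s=1$.} For the refined statement with a single frequency block $\Delta_k u_0$ and the $k^2$ loss, I would redo the above with $s=1$: the divergence of $\int_{\R^2}\langle\eta\rangle^{-2}\,\mmd\eta$ over $|\eta|\lesssim 2^k$ is only logarithmic, giving a factor $k$; combined with a second factor $k$ coming from the dyadic summation in the complementary variable that can no longer be absorbed by the geometric series, one obtains the stated $k^2 \|\Delta_k u_0\|_{H^1}$. Alternatively, and perhaps more cleanly, one interpolates: the estimate holds with constant $\lesssim 1$ for $s = 1+\delta$ and with constant $\lesssim 2^{Nk}$ (some fixed $N$) for $s=0$ on a single block, and a logarithmic-type interpolation in $\delta \sim 1/k$ yields the polynomial loss $k^2$ at $s=1$.

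\textbf{Main obstacle.} The genuinely delicate point is the \emph{coupling} between the two groups of variables: the Airy phase in $\xi$ carries the term $t\xi|\eta|^2$ and the Schrödinger phase in $\eta$ carries $t\xi|\eta|^2$ as well, so one cannot simply factor $U(t)$ as a tensor product of a $1$D Airy group and a $2$D Schrödinger group. The plan to get around this is to fix the ``spectator'' variable before applying each one-dimensional/two-dimensional maximal bound — i.e., treat $\xi$ as a frozen parameter (it only rescales time $t \mapsto t\xi$) when running the $2$D Schrödinger maximal estimate, and treat $|\eta|^2$ as a frozen parameter (a harmless shift $x \mapsto x + t|\eta|^2$, which does not affect the $L^\infty_{y}$ norm at fixed $x$ after we also integrate in $x$) when running the $1$D Airy estimate. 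Making the freezing rigorous — in particular checking that the implied constants in the one-dimensional estimates are uniform in the time-rescaling parameter $\xi$ on the relevant dyadic range, which they are by scaling for the Airy case and by the time-locality $T<1$ for the Schrödinger case — is the part that requires care, but this is exactly the strategy already used in \cite{RV2} for the $k=1$ equation and I would follow it closely, referring there for the one-dimensional inputs.
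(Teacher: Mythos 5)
The paper's own proof of this proposition is a one-line citation of Propositions 3.2 and 3.3 of \cite{RV2}, and since your plan ultimately defers to that reference for the substantive inputs, the bottom line agrees with the paper. However, the argument you actually sketch is \emph{not} the argument of \cite{RV2}, and as written it has a genuine gap coming from the order of the mixed norm. In $L^2_xL^{\infty}_{y,T}$ the supremum over $(y,t)$ is taken \emph{before} the $L^2_x$ integration. Your plan for the high-$\eta$ regime is to freeze $\xi$, apply the two-dimensional Schr\"odinger maximal estimate to the inner $\eta$-integral (with rescaled time $t\xi$), and then integrate over $\xi$ by Minkowski/Plancherel. But Minkowski's inequality in the form $\bigl\|\int F_{\xi}\,\mmd\xi\bigr\|_{L^2_xL^{\infty}_{y,T}}\le\int\|F_{\xi}\|_{L^2_xL^{\infty}_{y,T}}\,\mmd\xi$ forces you to take the modulus of each $F_{\xi}$, and $|F_{\xi}(x,y,t)|=\bigl|e^{i(t\xi)\Delta_y}\widehat{u_0}(\xi,\cdot)(y)\bigr|$ no longer depends on $x$ at all: the only $x$-dependence was the unimodular factor $e^{ix\xi}$. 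A function of $(y,t)$ alone has infinite $L^2_x(\R)$ norm, so the step produces nothing; and Plancherel in $x$ cannot be applied after the sup over $(y,t)$ has destroyed the Fourier structure. The same issue, in milder form, affects the high-$\xi$ regime: the shift $x\mapsto x+t|\eta|^2$ is \emph{time-dependent}, and a $t$-dependent translation does not commute with $\|\cdot\|_{L^2_xL^{\infty}_T}$ (it would with $L^{\infty}_TL^2_x$), so you cannot reduce to the unshifted Airy maximal function; what is actually needed is a maximal estimate for $e^{t\partial_x^3+ct\partial_x}$ uniform over drifts $|c|\lesssim\xi^2$ comparable to the dispersion, where the stationary-phase geometry genuinely changes.

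What \cite{RV2} does instead is to avoid any tensor-product factorization: one writes $U(t)\Delta_k u_0$ as convolution with the localized kernel
\[
I_k(x,y,t)=\int_{\R^3}e^{i(x\xi+y\cdot\eta+t\xi(\xi^2+|\eta|^2))}\varphi_k(\xi,\eta)\,\mmd\xi\,\mmd\eta,
\]
proves pointwise decay estimates for $I_k$ by splitting physical space into regions according to the stationary points of the \emph{full} three-dimensional phase, and then converts these kernel bounds into the $L^2_xL^{\infty}_{y,T}$ estimate by the standard Kenig--Ponce--Vega duality/$TT^*$ scheme; the $k^2$ loss at $s=1$ records the logarithmic divergences arising in the degenerate regions, rather than an interpolation in $\delta\sim 1/k$ as you suggest. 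If you intend to give a self-contained proof rather than cite, that is the route to follow; as a citation-based proof, your last paragraph alone (follow \cite{RV2}) is what the paper does, but the freezing strategy in the middle of your write-up should be removed or substantially repaired.
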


\begin{proof} See Propositions 3.2 and 3.3 in \cite{RV2}. 
\end{proof}

\begin{proposition}[$L^2-L^4$ Strichartz estimate]\label{thm gruenrock} For all $u_0 \in L^2(\R^3),$ it holds that 
\[
\|U(t)u_0\|_{L^4_{x,y,t}} \lesssim \|u_0\|_{L^2}. 
\]
\end{proposition}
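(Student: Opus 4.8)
\emph{Plan.} The estimate is due to Gr\"unrock \cite{Gru1}, so the fastest route is simply to quote it; for completeness I describe the idea behind it. Put $v := U(t)u_0$ and $\phi(\xi,\eta) = \xi(\xi^2+|\eta|^2)$. Since $\|v\|_{L^4_{x,y,t}}^2 = \||v|^2\|_{L^2_{x,y,t}}$ and the space--time Fourier transform of $|v|^2$ is a convolution of two measures carried by the characteristic surface $\Sigma = \{\tau = \phi(\xi,\eta)\}$ with densities built from $\widehat{u_0}$, Plancherel reduces the claim (modulo harmless reflections and conjugations) to the bilinear bound: for $f,g \in L^2(\R^3)$,
\[
\Big\| \int_{\R^3} f(\zeta_1)\, g(\zeta - \zeta_1)\, \delta\big(\tau - \phi(\zeta_1) - \phi(\zeta - \zeta_1)\big)\, \mmd\zeta_1 \Big\|_{L^2_{\zeta,\tau}(\R^3\times\R)} \lesssim \|f\|_{L^2}\|g\|_{L^2},
\]
where $\zeta = (\xi,\eta) \in \R\times\R^2$; specialising $f,g$ to $\widehat{u_0}$ (up to reflection) then gives the proposition.

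The plan for the bilinear bound is to freeze $(\zeta,\tau)$ and use the quadratic geometry of $\phi$ on the relevant fibre. With the symmetrising substitution $\zeta_1 = \tfrac12(\zeta+\omega)$, $\zeta - \zeta_1 = \tfrac12(\zeta-\omega)$, $\omega = (a,b)\in\R\times\R^2$, one computes $\phi(\zeta_1)+\phi(\zeta-\zeta_1) = \tfrac14\phi(\zeta) + Q_\zeta(\omega)$ with $Q_\zeta(a,b) = \tfrac{\xi}{4}(3a^2+|b|^2) + \tfrac12 a\,\eta\cdot b$, a quadratic form in $\omega$ whose matrix has determinant a fixed multiple of $\xi(3\xi^2-|\eta|^2)$ — equivalently, a multiple of $\det\mathrm{Hess}\,\phi(\xi,\eta)$. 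One would then transfer the $\delta$--constraint onto the level set $\{Q_\zeta = \tau - \tfrac14\phi(\zeta)\}$, a quadric in the three-dimensional $\omega$--space, apply Cauchy--Schwarz there, and integrate back in $(\zeta,\tau)$ via the coarea formula; the unfolding returns precisely $\|f\|_{L^2}^2\|g\|_{L^2}^2$, provided the surface measure one picks up in the Cauchy--Schwarz step is under control.

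The crux — and the reason this is a genuine theorem rather than a soft consequence of dispersive decay, which for the degenerate Zakharov--Kuznetsov dispersion is too weak to yield $L^4_{x,y,t}$ by abstract Strichartz machinery — is that this surface measure is \emph{not} uniformly bounded: its size is governed by $|\xi(3\xi^2-|\eta|^2)|$, and $Q_\zeta$ degenerates exactly on the cone $\{\xi = 0\}\cup\{|\eta|^2 = 3\xi^2\}$, which is precisely where the Gaussian curvature of $\Sigma$ vanishes. A plain Cauchy--Schwarz therefore cannot recover the scale-invariant bound. The remedy, following Gr\"unrock, is to run the scheme on the pieces of a Littlewood--Paley plus Whitney-type decomposition of frequency space adapted to this degeneracy cone: on each piece the localisation makes the fibre surface measure bounded and produces a geometric gain measured by the distance to the cone, and one then reassembles the pieces using almost-orthogonality in the output variables $(\zeta,\tau)$. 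I expect the main obstacle to be this final summation — one must check that the off-cone gain strictly beats the near-cone loss, uniformly across dyadic scales, so that the reconstructed estimate carries no leftover logarithm or power of the frequency.
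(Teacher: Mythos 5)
Your proof coincides with the paper's: Proposition \ref{thm gruenrock} is established there solely by citing Gr\"unrock \cite{Gru1} (with \cite[Lemma~3]{Gru2} as an alternative), which is exactly your primary route, so there is nothing to check against at the level of detail you sketch. For what it is worth, your supplementary outline is sound as far as it goes --- the symmetrized quadratic form $Q_\zeta$ and its determinant $\propto \xi(3\xi^2-|\eta|^2)$ are computed correctly, and you rightly identify the decomposition and summation near the degeneracy cone as the genuine content --- but, like the paper, you ultimately defer that step to the cited reference.
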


\begin{proof} This is a direct consequence of the observations made in \cite{Gru1}. Alternatively, see also \cite[Lemma~3]{Gru2}. 
\end{proof}

As a consequence of those estimates, we have the following retarded estimates for the group $U(t):$

\begin{proposition}[\cite{RV2}, Propositions 3.5--3.7] Let $f \in \mathcal{S}(\R^4).$ It holds that 
\begin{enumerate}
 \item[(i)]\label{thm retarded unitary}  
 \[
 \left\| \nabla \int_0^t U(t-t') f(t') \, \mmd t'\right\|_{L^{\infty}_T L^{2}_{x,y}} \lesssim \|f\|_{L^1_x L^2_{y,T}};
 \]
 \item[(ii)]\label{thm retarded localsmoothing} 
  \[
 \left\| \nabla^2 \int_0^t U(t-t') f(t') \, \mmd t' \right\|_{L^{\infty}_x L^2_{y,T}} \lesssim \|f\|_{L^1_x L^2_{y,T}};
 \]
 \item[(iii)]\label{thm retarded max est} 
  \[
 \left\| \int_0^t U(t-t') \Delta_k f(t') \, \mmd t' \right \|_{L^2_x L^{\infty}_{y,T}} \lesssim_{\eps} 2^{\eps k} \|\Delta_k f\|_{L^1_x L^2_{y,T}}. 
 \]
\end{enumerate}
\end{proposition}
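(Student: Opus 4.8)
The plan is to derive all three bounds from the homogeneous linear estimates of Propositions~\ref{thm localsmoothing}, \ref{thm max est} and \ref{thm gruenrock} by a $TT^{\ast}$/duality argument, followed by a device for removing the time-truncation in the Duhamel integral. Throughout I use $U(t-t') = U(t)U(-t')$, the unitarity of $U$ on $L^2$ and the fact that $\nabla$ commutes with $U$, and I abbreviate $Gf(t) := \int_0^t U(t-t')f(t')\,\mmd t'$ and $G_T f(t) := \int_0^T U(t-t')f(t')\,\mmd t'$.

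First I would dualize the Kato estimate of Proposition~\ref{thm localsmoothing}: since $u_0 \mapsto \nabla U(t)u_0$ is bounded $L^2_{x,y} \to L^\infty_x L^2_{y,T}$ and $\nabla$ is skew-adjoint, its adjoint (with $(L^\infty_x L^2_{y,T})^{\ast} = L^1_x L^2_{y,T}$) gives
\[
\Big\| \int_0^T U(-t')\,\nabla f(t')\,\mmd t' \Big\|_{L^2_{x,y}} \lesssim \|f\|_{L^1_x L^2_{y,T}},
\]
and localizing a derivative to a dyadic block, $\big\|\int_0^T U(-t')\Delta_k f(t')\,\mmd t'\big\|_{L^2} \lesssim 2^{-k}\|\Delta_k f\|_{L^1_x L^2_{y,T}}$ for $k\ge 1$. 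With this in hand the \emph{non-retarded} analogues of (i)--(iii) are immediate. For (i), $\nabla G_T f(t) = U(t)\int_0^T U(-t')\nabla f(t')\,\mmd t'$, so unitarity and the dual estimate bound $\|\nabla G_T f(t)\|_{L^2}$ uniformly in $t$. For (ii), writing $\nabla^2 G_T f(t) = \nabla U(t)\big[\int_0^T U(-t')\nabla f(t')\,\mmd t'\big]$ and applying Proposition~\ref{thm localsmoothing} to the bracketed $L^2$-function together with the dual estimate yields $\|\nabla^2 G_T f\|_{L^\infty_x L^2_{y,T}} \lesssim \|f\|_{L^1_x L^2_{y,T}}$. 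For (iii) with $k\ge 1$ the bracketed function $\int_0^T U(-t')\Delta_k f(t')\,\mmd t'$ has frequency $\sim 2^k$, so the endpoint maximal estimate of Proposition~\ref{thm max est} bounds the $L^2_x L^\infty_{y,T}$-norm of $G_T(\Delta_k f)$ by $k^2\cdot 2^k\cdot 2^{-k}\|\Delta_k f\|_{L^1_x L^2_{y,T}} = k^2\|\Delta_k f\|_{L^1_x L^2_{y,T}}$, and $k^2 \lesssim_\eps 2^{\eps k}$ absorbs the loss.

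It remains to pass from $G_T$ to $G$. For (i) this is free: the time variable appears only in the outer $L^\infty_T$, so for each fixed $t$ one has $\nabla G f(t) = \nabla G_T(f\mathbf 1_{[0,t]})(t)$ with $\|f\mathbf 1_{[0,t]}\|_{L^1_x L^2_{y,T}} \le \|f\|_{L^1_x L^2_{y,T}}$, and one takes the supremum over $t$. For (ii) and (iii) the time variable sits \emph{inside} the norm, and there I would remove the truncation by a Christ--Kiselev-type dyadic-in-time decomposition of $\int_0^t$, exactly as in the treatment of the analogous KdV retarded smoothing estimates by Kenig--Ponce--Vega and in Propositions~3.5--3.7 of \cite{RV2}; when the mixed-norm structure obstructs a black-box application of the lemma, the fallback is a direct estimate on the kernel of $G_T$ by stationary phase on the oscillatory integral defining the propagator, so that the truncated kernel is controlled by the same quantity. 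For (iii) one has the extra comfort that the source and target time-integrabilities genuinely improve ($2$ versus $\infty$) and that a logarithmic loss in the decomposition would in any case be harmless, being absorbed into $2^{\eps k}$.

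The main obstacle is precisely this last step for (ii): one must pass from the full Duhamel operator to the retarded one in a regime where source and target carry the same ($L^2$) integrability in time, the borderline case where Christ--Kiselev does not apply verbatim, forcing one either into a careful kernel/oscillatory-integral estimate or into re-running the proof of the homogeneous smoothing estimate (via Plancherel in the $(y,t)$ variables after a change of variable on the $x$-frequency side) directly for the inhomogeneous equation $\partial_t v + \partial_x\Delta v = f$, $v(0)=0$. Since these bounds are exactly Propositions~3.5--3.7 of \cite{RV2}, one may of course also simply invoke that reference.
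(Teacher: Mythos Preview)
The paper provides no proof of this proposition at all: it is stated with the attribution ``\cite{RV2}, Propositions~3.5--3.7'' and then used as a black box, so your proposal is strictly more than what the paper does. Your sketch is the standard route (dualize the Kato smoothing to get the non-retarded bounds, then remove the truncation), and you correctly flag the one genuine obstacle, namely that in (ii) the source and target both carry $L^2$ integrability in time so the Christ--Kiselev lemma does not apply off the shelf; since the paper simply cites \cite{RV2} for precisely these estimates, your final sentence deferring to that reference is in fact exactly the paper's own ``proof''.
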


\begin{proposition}[Retarded $L^2-L^4$ Strichartz]\label{thm retarded gruenrock} Let $f \in \mathcal{S}(\R^4).$ It holds that 
\[
\left\| \nabla \int_0^t U(t-t') f(t') \, \mmd t' \right\|_{L^4_{x,y,t}} \lesssim \|f\|_{L^1_xL^2_{y,t}}.
\]
\end{proposition}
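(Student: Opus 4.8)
The plan is to deduce this retarded estimate from its \emph{non-retarded} counterpart
\[
\left\| \nabla \int_{\R} U(t-t') f(t') \, \mmd t' \right\|_{L^4_{x,y,t}} \lesssim \|f\|_{L^1_x L^2_{y,t}},
\]
together with the Christ--Kiselev lemma. To prove the non-retarded bound I would write $\nabla \int_{\R} U(t-t') f(t') \, \mmd t' = U(t)\, g$, where $g := \nabla \int_{\R} U(-t') f(t') \, \mmd t'$; this is immediate, since $U$ is a unitary group commuting with $\nabla$. By the Strichartz estimate of Proposition \ref{thm gruenrock} it then suffices to show $g \in L^2(\R^3)$ with $\|g\|_{L^2} \lesssim \|f\|_{L^1_x L^2_{y,t}}$, and this is exactly the dual form of Kato smoothing. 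Indeed, by duality it is enough to bound $\langle g, h \rangle$ for $h$ in a dense subclass of $L^2(\R^3)$, and moving $\nabla$ and $U(-t')$ onto $h$ (using $U(t)^* = U(-t)$ and integration by parts) gives
\[
\langle g, h \rangle = - \int_{\R} \langle f(t'), \nabla U(t') h \rangle \, \mmd t';
\]
estimating the integrand for fixed $t'$ by Cauchy--Schwarz in $y$ followed by an $L^1_x$--$L^\infty_x$ Hölder step, and afterwards Cauchy--Schwarz in $t'$, one obtains
\[
|\langle g, h \rangle| \le \|f\|_{L^1_x L^2_{y,t}} \, \|\nabla U(\cdot) h\|_{L^\infty_x L^2_{y,t}} \lesssim \|f\|_{L^1_x L^2_{y,t}} \, \|h\|_{L^2},
\]
the last inequality being precisely Proposition \ref{thm localsmoothing}. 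Taking the supremum over such $h$ yields the non-retarded estimate. (Note that the retarded estimates already available do not help here: each of them carries an $L^\infty_t$ norm, whereas the target $L^4_{x,y,t}$ demands genuine time integrability, supplied only by the Strichartz inequality.)

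To pass from the non-retarded operator $f \mapsto \nabla \int_{\R} U(t-t') f(t')\,\mmd t'$ to the retarded one, I would invoke the Christ--Kiselev lemma. The operator is bounded from the source space into $L^4_{x,y,t}$, whose time exponent $4$ strictly exceeds the exponent $2$ appearing in the time slot of $\|f\|_{L^1_x L^2_{y,t}}$; the only structural property of the source norm the argument uses is its $\ell^2$-summability over disjoint time intervals,
\[
\sum_j \|f \, \mathbf{1}_{I_j}\|_{L^1_x L^2_{y,t}}^2 \le \|f\|_{L^1_x L^2_{y,t}}^2 \qquad \text{for disjoint } I_j,
\]
which follows from Minkowski's integral inequality. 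Combined with $2 < 4$, this is exactly what is needed to run the Christ--Kiselev decomposition of the time axis into intervals of comparable mass, and one concludes the stated bound for the retarded operator.

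The main obstacle is this last step. Since $L^1_x L^2_{y,t}$ is not literally of the form $L^p_t(X)$ --- the $L^1$ sits in a space variable rather than in time --- one cannot quote the Christ--Kiselev lemma in its most commonly cited formulation verbatim. One must either use a version phrased in terms of the $\ell^2$-in-time additivity above, or, working with $f \in \mathcal{S}(\R^4)$ throughout (which is all the proposition asks for, and is enough by density), carry out the dyadic-in-time splitting by hand. The remaining ingredients --- the duality identity, the Cauchy--Schwarz/Hölder chain, and the two applications of Propositions \ref{thm localsmoothing} and \ref{thm gruenrock} --- are routine.
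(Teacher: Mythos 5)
Your proposal is correct and is essentially the paper's proof: the non-retarded bound is obtained by combining Proposition \ref{thm gruenrock} with the dual form of the Kato smoothing estimate (Proposition \ref{thm localsmoothing}), and the passage to the retarded operator uses an anisotropic Christ--Kiselev lemma. The subtlety you flag about $L^1_x L^2_{y,t}$ not being of the form $L^p_t(X)$ is exactly what the paper handles by citing Lemma B.3(i) of \cite{BLC}.
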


\begin{proof} Proposition \ref{thm gruenrock} together with the dual version of Proposition \ref{thm localsmoothing} implies that 
\[
\left\| \nabla \int_{-\infty}^{\infty} U(t-t') f(t') \, \mmd t' \right\|_{L^4_{x,y,t}} \lesssim \|f\|_{L^1_x L^2_{y,t}}.
\]
Using Lemma B.3, part (i) in \cite{BLC}, which is an anisotropic version of the famous Christ--Kiselev lemma (see \cite{CK}), gives us the result. 
\end{proof}

\section{The Cauchy problem for \eqref{eq gZK 4/3} : The case $s \in (3/4,1)$} 

As mentioned in the introduction, for the easier $s \in (3/4,1)$ range we are able to employ the Picard iteration method. Indeed, we let 
\[
\Gamma(u) = U(t)u_0 + \int_0^t U(t-t') \partial_x(u^{7/3})(t') \, \mmd t'.
\]
We wish to prove that $\Gamma$ preserves the metric space $E_a(T) = \{u \in X^s_T, \|u\|_{X^s_T} \le a\}$ for some $a>0,$ and that, for such $a,$ it also defines a contraction there. In order 
to prove the first fact, we need to prove that 
\begin{equation}\label{eq group bound}
\|U(t)u_0\|_{X^s_T} \lesssim C_s \|u_0\|_{H^s}.
\end{equation}
This is a relatively simple computation using the fact that $U$ is unitary and Propositions \ref{thm localsmoothing},\ref{thm max est} and \ref{thm gruenrock}, and thus we will omit it. Therefore, we are left 
with the integral term $\int_0^t U(t-t') \partial_x(u^{7/3})(t') \, \mmd t'$ to handle. As bounding that term is essentially as difficult as bounding the difference 
\[
\int_0^t U(t-t') \partial_x(u^{7/3}-v^{7/3})(t') \, \mmd t',
\]
which naturally arises when trying to prove that $\Gamma$ is a contraction, we prove only the latter. 

On the other hand, we observe that the retarded Proposition \ref{thm retarded unitary}(i), \ref{thm retarded localsmoothing}(ii), \ref{thm retarded max est}(iii) and 
\ref{thm retarded gruenrock} imply, by the same token as before, that 
\begin{equation}\label{eq nonlinear}
\left\| \int_0^t U(t-t')\partial_x (u^{7/3}-v^{7/3})(t') \, \mmd t' \right\|_{X^s_T} \lesssim \left\| 2^{sk} \|\Delta_k (u^{7/3}-v^{7/3}) \|_{L^1_xL^2_{y,T}} \right\|_{\ell^2(\N)}.
\end{equation}
Until this point, we have followed essentially the same lines as in \cite{RV1,RV2}. The first point we have to change from the approach undertaken in those references is in the decomposition of the nonlinear part. 
In the case of the Zakharov--Kuznetsov equation, Ribaud and Vento's approach involves writing
\[
u^2= \lim_{j \to \infty} (P_j u)^2 = (P_0 u)^2 + \sum_{j \ge 0} \Delta_{j+1} u (P_{j+1} u + P_j u).
\]
Of course, such a clean formula is only available when the exponent is an integer. Our approach will be similar, yet being careful with the fact that we have a fractional nonlinearity. 

\begin{lemma}\label{lemma nonlinear 1} Let $u,v \in \mathcal{S}(\R^4)$ and $k \ge 1.$ It holds that $\| \Delta_k(u^{7/3}-v^{7/3}) \|_{L^1_x L^2_{y,T}}$ is bounded by an absolute constant times
\begin{equation}\label{eq first bound} 
\begin{split}
\left(\| u\|^{4/3}_L +  \| v\|^{4/3}_L\right)& \sum_{m \ge k} \| \Delta_m(u-v) \|_{L^{p_1}_{x,y,T}} \\
+ \left(\| u\|^{1/3}_L + \| v\|^{1/3}_L\right) & \|u-v\|_L \times \sum_{m \ge 0} \min(1,(k-m)_+ 2^{m-k}) \|\Delta_m u\|_{L^{p_1}_{x,y,T}}.
\end{split}
\end{equation} 
Here, we abbreviate $L = L^{4p_1'/3}_x L^{4\tilde{p_1}/3}_{y,T},$ where $q', \tilde{q}$ are defined for $q \ge 2$ so that $\frac{1}{q} + \frac{1}{q'} = 1, \, \frac{1}{q} + \frac{1}{\tilde{q}} = 1/2.$ 
\end{lemma}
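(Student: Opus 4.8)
\textbf{Proof proposal for Lemma \ref{lemma nonlinear 1}.}

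The plan is to reduce everything to a pointwise (in $x,y$) paraproduct-type estimate for the fractional power $g \mapsto g^{7/3}$ and then sum the resulting dyadic pieces in $L^1_x L^2_{y,T}$ via H\"older. First I would record the elementary inequality
\[
|a^{7/3} - b^{7/3}| \lesssim \left(|a|^{4/3} + |b|^{4/3}\right)|a-b|,
\]
valid for real $a,b$, which comes from the fundamental theorem of calculus applied to $t \mapsto t^{7/3}$ together with the fact that $|t|^{4/3}$ is convex (so the derivative $\tfrac{7}{3}|t|^{4/3}\operatorname{sgn} t$ is controlled on the segment between $a$ and $b$ by $|a|^{4/3}+|b|^{4/3}$). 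This handles the ``ultra-high frequency'' interaction, i.e.\ frequencies $m \ge k$ on the factor $u-v$: writing $w = u - v$, applying $\Delta_k$ and then H\"older in $(x,y,T)$ with the exponents so that $\tfrac{4}{3}\cdot\tfrac{3}{4p_1'} + \tfrac{1}{p_1} = 1$ on the $x$-side and the analogous balance $\tfrac{4}{3}\cdot\tfrac{3}{4\tilde{p_1}} + \tfrac 1{p_1}=\tfrac12$ on the $(y,T)$-side, one gets exactly the first line of \eqref{eq first bound}, with $L = L^{4p_1'/3}_x L^{4\tilde{p_1}/3}_{y,T}$ absorbing the $|u|^{4/3}+|v|^{4/3}$ factor and $\sum_{m\ge k}\|\Delta_m w\|_{L^{p_1}_{x,y,T}}$ absorbing $w$ (the sum over $m \ge k$ appears because $\Delta_k(F\cdot w)$ with $F$ of comparable-or-lower frequency only sees the portion of $w$ of frequency $\gtrsim 2^k$).

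The more delicate term is the ``low-frequency on $w$'' interaction, which should produce the second line with the gain $\min(1,(k-m)_+ 2^{m-k})$. Here the idea is to freeze $w$ at a low frequency $2^m$, $m \le k$, and use a first-order Taylor expansion of $t\mapsto t^{7/3}$ \emph{around} the low-frequency part $P_m u$ (or rather $P_{\le m} u$): schematically
\[
u^{7/3}-v^{7/3} \;=\; \tfrac{7}{3}\,(P_{\le m}u)^{4/3}\,w \;+\; \text{(quadratic-type remainder)},
\]
so that the leading term, after applying $\Delta_k$, is a product of a function localized below frequency $2^m$ with $w$; hence $\Delta_k$ of it vanishes unless the high-frequency factor — which must be $w$ itself, but $w$ sits at frequency $\lesssim 2^m \ll 2^k$ — contributes, forcing a commutator/mean-value gain. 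Quantitatively, $\Delta_k\big[(P_{\le m}u)^{4/3} P_m w\big]$ is estimated by a Coifman--Meyer / Bony-type paraproduct bound giving the factor $2^{m-k}$ for each unit of frequency separation, and the $(k-m)_+$ arises from summing the telescoping contributions of the annuli between $2^m$ and $2^k$ (this is exactly the mechanism behind ``Proposition 2.3 in \cite{DG}'' referenced in the introduction, here in its anisotropic incarnation). I would then bound the remainder term by $(|u|^{1/3}+|v|^{1/3})|w|\cdot|\text{something of frequency}\sim 2^k|$ and observe that it is dominated by the same sum with the trivial bound $1$ in the $\min$. Collecting, and applying H\"older in $(x,y,T)$ exactly as in the first case — now with $(|u|^{1/3}+|v|^{1/3})|w|$ playing the role of the amplitude, which is why the norm $\|u-v\|_L$ factors out and $\|u\|_L^{1/3}+\|v\|_L^{1/3}$ appears — yields the second line of \eqref{eq first bound}.

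The main obstacle I anticipate is making the fractional Leibniz / paraproduct step rigorous for the non-smooth nonlinearity $t\mapsto t^{7/3}$: unlike the polynomial case in \cite{RV1,RV2} where $u^2$ decomposes exactly, here I must control $\Delta_k$ of a composition $F(u) = u^{7/3}$ and extract the frequency-difference gain $2^{m-k}$ without losing derivatives. The clean way is to write $u^{7/3}-v^{7/3} = \int_0^1 \tfrac{7}{3}\,(v + \theta w)^{4/3}\,\d\theta \cdot w$, decompose $w = \sum_m \Delta_m w$, and for each $m$ split the amplitude $A_\theta := (v+\theta w)^{4/3}$ into $P_{\le k-C} A_\theta$ and its high-frequency complement; the low-high interaction $\Delta_k(P_{\le k-C}A_\theta \cdot \Delta_m w)$ is nonzero only when $m \gtrsim k$, contributing to line one, while the high-high and high-low pieces are where the $\min(1,(k-m)_+2^{m-k})$ genuinely comes from and must be handled by Bernstein's inequality on $A_\theta$ (using that $\|\Delta_\ell A_\theta\|$ is controlled, via the chain rule and Bernstein, by $2^{-\ell}$ times a factor of $|u|^{1/3}+|v|^{1/3}$ times a frequency-$2^\ell$ piece of $u$ or $v$). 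Throughout, all products must be estimated in the anisotropic space $L^1_x L^2_{y,T}$, so every H\"older split has to respect the two different exponent balances (conjugate on $x$, dual-to-$L^2$ on $(y,T)$), which is precisely why the exponents $4p_1'/3$ and $4\tilde{p_1}/3$ with the tilde/prime conventions enter — I would set these up once at the start and reuse them mechanically.
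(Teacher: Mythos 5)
Your high-frequency analysis and the anisotropic H\"older bookkeeping (the balances $1/p_1'+1/p_1=1$ in $x$ and $1/\tilde{p_1}+1/p_1=1/2$ in $(y,T)$, which is exactly why $L=L^{4p_1'/3}_xL^{4\tilde{p_1}/3}_{y,T}$ appears) agree with the paper's. The genuine gap is in the low-frequency interaction. You linearize via $u^{7/3}-v^{7/3}=\int_0^1\tfrac73 A_\theta\,\d\theta\cdot w$ with $A_\theta=(v+\theta w)^{4/3}$, do a Bony splitting of $A_\theta\cdot\Delta_m w$, and claim that $\|\Delta_\ell A_\theta\|$ is controlled ``via the chain rule and Bernstein by $2^{-\ell}$ times $(|u|^{1/3}+|v|^{1/3})$ times a frequency-$2^\ell$ piece of $u$ or $v$.'' That step fails as stated: $\nabla A_\theta=\tfrac43(v+\theta w)^{1/3}\nabla(v+\theta w)$ contains the \emph{full} gradient, i.e.\ contributions $2^{m'}\Delta_{m'}u$ over \emph{all} $m'$, and since the outer factor $(v+\theta w)^{1/3}$ is not frequency-localized you cannot discard the pieces with $m'>\ell$; they enter with coefficients $2^{m'-\ell}\gg1$, which is not dominated by the weights $\min(1,(k-m)_+2^{m-k})\le1$ in \eqref{eq first bound}. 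Extracting a genuinely frequency-localized piece of $u$ out of $\Delta_\ell$ of the composition $(v+\theta w)^{4/3}$ is essentially the content of the lemma itself, so the argument is circular at exactly the point where the gain must be produced. (Your paragraph-one justification of the first line --- that the amplitude has ``comparable-or-lower frequency'' so $\Delta_k$ only sees $\Delta_{\ge k}w$ --- suffers from the same flaw, although your later split into $P_{\le k-C}A_\theta$ and its complement repairs that particular part.)

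The missing idea, and the way the paper proceeds, is to pre-localize \emph{before} differentiating: telescope $u^{7/3}-v^{7/3}=\sum_{m\ge0}J_m(u,v)$ with $J_m=[(P_{m+1}u)^{7/3}-(P_{m+1}v)^{7/3}]-[(P_mu)^{7/3}-(P_mv)^{7/3}]$, apply the fundamental theorem of calculus twice inside $J_m$ to exhibit the factors $\Delta_{m+1}(u-v)$ and $\Delta_{m+1}u+\theta\Delta_{m+1}(u-v)$, and only then, for $m<k$, use $\|\Delta_kJ_m\|_{L^1_xL^2_{y,T}}\lesssim2^{-k}\|\nabla J_m\|_{L^1_xL^2_{y,T}}$. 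Because every function inside $J_m$ is already cut off at frequency $2^{m+1}$, the gradient produces only factors $2^{m'}$ with $m'\le m+1\le k$, so the net gain $2^{m-k}$ is real, and the factor $(k-m)_+$ arises from resumming (Fubini) the contributions of $J_m$ over $m'\le m<k$. Your intuition about where $(k-m)_+2^{m-k}$ comes from is correct, but you need this telescoping (or some equivalent pre-localization of the amplitude) to make the Bernstein step legitimate for the non-polynomial nonlinearity; without it the high-frequency content of $u$ hiding inside $A_\theta$ destroys the estimate.
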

\begin{proof} 
We start by writing 
\begin{equation*}
\begin{split}
\Delta_k(u^{7/3}-v^{7/3}) &= \sum_{m \ge 0} [\Delta_k((P_{m+1} u)^{7/3}-(P_{m+1}v)^{7/3}) - \Delta_k ( (P_m u)^{7/3}-(P_m v)^{7/3}))] \\
			  &= \sum_{m \ge 0} \Delta_k (J_m(u,v)),
\end{split}
\end{equation*}
where we define $J_m(u,v)$ to be
\[
 P_{m+1}(u-v) \int_{0}^1 ((P_{m+1}u) + \theta (P_{m+1}(v-u)))^{4/3} 
 \]
\[
- P_m(u-v) \int_0^1 ((P_{m}u) + \theta (P_{m}(v-u)))^{4/3} \, \mmd \theta.
\]
We modify this expression once more: it can be rewritten as 
\begin{align*}
 & \Delta_{m+1}(u-v) \int_0^1 ((P_{m+1}u) + \theta (P_{m+1}(v-u)))^{4/3} \mmd \theta \cr
 & -  P_m(u-v) \int_0^1 \left[ ((P_{m}u) + \theta (P_{m}(v-u)))^{4/3} - ((P_{m+1}u) + \theta (P_{m+1}(v-u)))^{4/3}\right] \mmd \theta .\cr
\end{align*}
Finally, the last expression can be written in the form
\begin{align*}
 &=:  \Delta_{m+1}(u-v) \tilde{J}_m(u,v) + c P_m(u-v) \int_0^1 [\Delta_{m+1} u + \theta \Delta_{m+1}(u-v)] \cr  
 & \left( \int_0^1 \left[ ((P_{m}u) + \theta (P_{m}(v-u))) + s(\Delta_{m+1} u + \theta \Delta_{m+1}(u-v))\right]^{1/3} \mmd s \right) \mmd \theta. \cr
\end{align*}
By virtue of Minkowski's inequality, Young's convolution inequality and H\"older's inequality, we get that, for $m \ge k,$ 
\begin{equation*}
\begin{split}
 & \| \Delta_k(J_m(u,v)) \|_{L^1_x L^2_{y,T}} \\
					  & \lesssim  \|\Delta_{m+1} (u-v) \|_{L^{p_1}_{x,y,T}} \left(\|u\|_{L^{4p_1'/3}_x L^{4\tilde{p_1}/3}_{y,T}}^{4/3} + \|v\|_{L^{4p_1'/3}_x L^{4\tilde{p_1}/3}_{y,T}}^{4/3}\right) \\
					  & +   \| \Delta_m u \|_{L^{p_1}_{x,y,T}} \left(\|u\|_{L^{4p_1'/3}_x L^{4\tilde{p_1}/3}_{y,T}}^{1/3} + \|v\|_{L^{4p_1'/3}_x L^{4\tilde{p_1}/3}_{y,T}}^{1/3}\right)\|u-v\|_{L^{4p_1'/3}_xL^{4\tilde{p_1}/3}_{y,T}}.
\end{split}					  
\end{equation*}
Now, for $ m < k,$ we gain an exponential factor by introducing a gradient to take advantage of frequency localization around $2^k.$ 
Young's inequality several times plus H\"older's inequality then imply
\begin{align*}
& \| \Delta_k (J_m (u,v)) \|_{L^1_x L^2_{y,T}} \cr 
&\lesssim 2^{-k} \| \nabla [ (P_{m+1} u)^{7/3}-(P_{m+1}v)^{7/3} ]  - \nabla [(P_m u)^{7/3}-(P_mu)^{7/3}] \|_{L^1_xL^2_{y,T}} \cr 
					 \lesssim 2^{-k} \sum_{i=0,1} & \| |P_{m+i}(u-v)|(|\nabla P_{m+i}u|+|\nabla P_{m+i}v|)|(|P_{m+i}u|^{1/3}+|P_{m+i} v|^{1/3}\|_{L^1_xL^2_{y,T}}. \cr
\end{align*}
This is clearly bounded by an absolute constant times
\[
2^{-k} \left( \sum_{m' \le m+1}  2^{m'} \| \Delta_{m'} u \|_{L^{p_1}_{x,y,T}} \right) \| u-v\|_{L^{4p_1'/3}_xL^{4\tilde{p_1}/3}_{y,T}} \left(\|u\|_{L^{4p_1'/3}_x L^{4\tilde{p_1}/3}_{y,T}}^{1/3} + \|v\|_{L^{4p_1'/3}_x L^{4\tilde{p_1}/3}_{y,T}}^{1/3}\right). 
\]
Together with the bound for high frequencies this implies the lemma. 
\end{proof}

We note that this lemma is heavily inspired by Proposition 2.3 in \cite{DG}, which is a clean version of such a result in the recent literature. 

\subsection{Analysis of high frequencies} We now look at the frequencies $\ge k$ in the bound given by Lemma \ref{lemma nonlinear 1}. More precisely, we wish to estimate
\begin{equation}\label{eq high}
\|w\|_{L^{4p_1'/3}_xL^{4\tilde{p_1}/3}_{y,T}} \hskip5pt\text{ and } \hskip5pt\left\| 2^{sk} \sum_{m > k} \| \Delta_m w \|_{L^{p_1}_{T,x,y}} \right\|_{\ell^2(\N)}
\end{equation}
in terms of the norm $\|w\|_{X^s_T}.$ By interpolating the first and fourth terms in the sum defining $\|\Delta_k w \|_{Y^s_T}$, it holds that 
\begin{equation}\label{eq interpol1}
2^{sk} \| \Delta_k w\|_{L^q_T L^p_{x,y}} \lesssim \|\Delta_k w\|_{Y^s_T},
\end{equation}
whenever we have $\frac{1}{q} = \frac{\theta}{4}, \, \frac{1}{p} = \frac{1}{2} - \frac{\theta}{4}.$ Therefore, 
$$2^{sj} \|\Delta_j w\|_{L^{p_1}_{T,x,y}} \lesssim T^{\delta} 2^{sj}\| \Delta_j w \|_{L^{q_1}_TL^{p_1}_{x,y}} \lesssim T^{\delta} \|\Delta_j w \|_{Y^s_T},$$
for some $\delta >0.$ In order to take care of  $\|w\|_{L^{4(p_1)'/3}_x L^{4(\tilde{p_1})/3}_{y,T}}$, we first bound it by 
\[
\sum_{j \ge 0} \| \Delta_j w\|_{{L^{4(p_1)'/3}_x L^{4(\tilde{p_1})/3}_{y,T}}}.
\]
Now, interpolating the second and third terms defining $Y^s_T,$ we get 
\begin{equation}\label{eq interpol2} 
2^{(s+2\theta-1-\eps) j} \| (\Delta_j w) \|_{L^{q_2}_x L^{4\tilde{p_1}/3}_{y,T}} \lesssim \| \Delta_j w\|_{Y^s_T},
\end{equation}
where $\frac{3}{2\tilde{p_1}} = \theta,$ and thus $q_2 = \frac{8p_1}{p_1 + 6}.$ Bernstein's inequality then implies that
\[
\| \Delta_j w\|_{{L^{4(p_1)'/3}_x L^{4(\tilde{p_1})/3}_{y,T}}} \lesssim 2^{\left(\frac{12-5p_1}{8p_1}\right)j} \| \Delta_j w \|_{L^{q_2}_x L^{4\tilde{p_1}/3}_{y,T}}.
\]
 This implies that 
\[
\|w\|_{L^{4(p_1)'/3}_x L^{4(\tilde{p_1})/3}_{y,T}} \lesssim \sum_{j \ge 0} 2^{\left[\left(\frac{12-5p_1}{8p_1}\right)-(s+2\theta-1-\eps) \right] j}\|w\|_{Y^s_T}.
\]
The sum above converges for $s > 1 - 2\theta - \frac{12-5p_1}{8p_1} = \frac{12+p_1}{8p_1}.$
Notice that the argument above for using Bernstein's inequality implies, in particular, that the same strategy works as long as $p_1 < \frac{12}{5}.$ Therefore, setting $p_1 = \frac{12}{5} - \gamma$ above with $\gamma>0$ sufficiently 
small and running the argument, 
we will see that the second equation in \eqref{eq high} is bounded pointwise by 
\[
T^{\delta} \|(1_{j \le 0} 2^{-sj})*\|(\Delta_j w)\|_{Y^s_T}\|_{\ell^2(\N)} \times \|w\|_{X^s_T}^{4/3} \lesssim T^{\delta} \|w\|_{X^s_T}^{7/3},
\]
by the discrete version of Young's convolution inequality, as long as $s>\frac{3}{4}.$ 

\subsection{Analysis of low frequencies}
We need now to bound
\begin{equation}\label{eq low} 
\left\| 2^{sk} \sum_{k>m} (k-m)_+ 2^{-(k-m)} \|\Delta_m u\|_{L^{p_1}_{T,x,y}}\right\|_{\ell^2(\N)}.
\end{equation}
As we saw in the analysis for the high frequencies, 
\[
\| \Delta_m u\|_{L^{p_1}_{T,x,y}} \lesssim 2^{-sm} T^{\delta} \|\Delta_m u\|_{Y^s_T}.
\]
Thus we get that
\begin{equation}\label{eq young2}
\begin{split}
2^{sk} \sum_{k>m} (k-m) 2^{-(k-m)} \|\Delta_m u\|_{L^{p_1}_{T,x,y}} 
&\lesssim T^{\delta}\sum_{k>m} (k-m) 2^{(s-1)(k-m)} \|\Delta_m u \|_{Y^s_T} \\
&= T^{\delta} \left[(1_{j \ge 0} j 2^{(s-1)j}) * \| \Delta_j u\|_{Y^s_T}\right] (k). 
\end{split}
\end{equation}
Again by the discrete version of Young's inequality, we get that the $\ell^2(\N)$ norm of the expression on the left hand side of \eqref{eq young2} is bounded by 
\[
T^{\delta} \|u\|_{X^s_T} \| (1_{j \ge 0} j 2^{(s-1)j})\|_{\ell^1(\N)} \lesssim T^{\delta} \|u\|_{X^s_T},
\]
as long as $s < 1.$ 
Therefore, we get from \eqref{eq first bound} that 
\[
\left\| \int_0^t U(t-t')\partial_x (u^{7/3})(t') \, \mmd t' \right\|_{X^s_T} \lesssim T^{\delta} \|u\|_{X^s_T}^{7/3},
\]
given that $s \in (3/4,1).$ 

Therefore, together with the estimate \eqref{eq first bound}, we obtain 
\begin{equation}\label{eq final bound}
\left\| 2^{sk} \| \Delta_k(u^{7/3}-v^{7/3})\|_{L^1_xL^2_{y,T}}\right\|_{\ell^2(\N)} \lesssim T^{\delta} \|u-v\|_{X^s_T} \left(\|u\|_{X^s_T}^{4/3} + \|v\|_{X^s_T}^{4/3}\right).
\end{equation}

\subsection{Conclusion} The Duhamel formulation of \eqref{eq gZK 4/3} implies that the operator 
\[
\Gamma(u) = U(t)u_0 + \int_0^t U(t-t')\partial_x(u^{7/3})(t') \, \mmd t' 
\]
is a contraction on $E_a(T),$ where we pick $a = 2C_s\|u_0\|_{H^s}$ the upper bound in \eqref{eq group bound} and $T \sim \|u_0\|_s^{-\beta},$ for some $\beta >0.$ Therefore, 
the fixed point theorem yields us that there is a solution to such a problem in $C([0,T] \colon H^s(\R^3))\cap X^s_T$ with $s \in (3/4,1),$ and it is unique. Moreover, 
standard considerations imply that the data-to-solution map is Lipschitz in that case, which concludes the proof of well-posedness. 

\section{The Cauchy problem for \eqref{eq gZK 4/3}: The case $s \in [1,2).$} 

\subsection{An a priori bound} 

The gain of the form $2^{m-k}$ for low frequencies given by Lemma \ref{lemma nonlinear 1} is enough to allow us to prove local well-posedness for $s \in (3/4,1),$ but it is clear from the 
analysis of low frequencies in \S 3.2 above that it breaks down for $s=1.$ In order to fix that, we will need the following version of Lemma \ref{lemma nonlinear 1}.

\begin{lemma}\label{lemma nonlinear 2} Let $u \in \mathcal{S}(\R^4)$ and $k \ge 1.$ It holds that $\| \Delta_k(u^{7/3}) \|_{L^1_x L^2_{y,T}}$ is bounded by an absolute constant times
\begin{equation}\label{eq mod bound} 
\begin{split}
\| u\|^{4/3}_{X^s_T} \times & \left ( \sum_{m \ge k} \| \Delta_m(u) \|_{L^{p_1}_{x,y,T}} +  \sum_{m \le k} (k-m)_+ 2^{\min(2,\frac{1}{4} + s^-)(m-k)} \|\Delta_m u\|_{L^{p_1}_{x,y,T}}\right).
\end{split} 
\end{equation}
Here, we abbreviate $L = L^{4p_1'/3}_x L^{4\tilde{p_1}/3}_{y,T},$ where $q', \tilde{q}$ are defined for $q \ge 2$ so that $\frac{1}{q} + \frac{1}{q'} = 1, \, \frac{1}{q} + \frac{1}{\tilde{q}} = \frac12,$ 
and $s \in (1-\gamma,+\infty)$ for some sufficiently small $\gamma > 0.$ 
\end{lemma}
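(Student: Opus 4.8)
The plan is to rerun the telescoping paraproduct scheme from the proof of Lemma \ref{lemma nonlinear 1} with $v=0$, changing only the treatment of the low modes: where that proof extracts a single gradient to gain a factor $2^{m-k}$, here we extract a (possibly fractional) operator $\langle\nabla\rangle^{\sigma}$ with $\sigma=\min(2,\tfrac14+s^-)$ and gain $2^{\sigma(m-k)}$ (for $\sigma=2$ this is exactly the Laplacian-based argument announced in the introduction). Concretely, I would write
\[
\Delta_k(u^{7/3})=\sum_{m\ge 0}\Delta_k\big((P_{m+1}u)^{7/3}-(P_m u)^{7/3}\big)=:\sum_{m\ge 0}\Delta_k(J_m(u)),
\]
and, as in Lemma \ref{lemma nonlinear 1}, rewrite $J_m(u)$ by the fundamental theorem of calculus as $\Delta_{m+1}u$ times an average over $\theta\in[0,1]$ of $(P_m u+\theta\Delta_{m+1}u)^{4/3}$, plus the correction term comparing the $P_m$ and $P_{m+1}$ profiles. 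For the high modes $m\ge k$ nothing changes: Minkowski's, Young's convolution and Hölder's inequalities give, verbatim as before,
\[
\|\Delta_k(J_m(u))\|_{L^1_xL^2_{y,T}}\lesssim \|\Delta_m u\|_{L^{p_1}_{x,y,T}}\,\|u\|_L^{4/3},
\]
and summing over $m\ge k$ and invoking the interpolation bound $\|u\|_L\lesssim\|u\|_{X^s_T}$ from \S 3.1 (legitimate for $s>3/4$, hence for $s>1-\gamma$ with $\gamma$ small) yields the first sum in \eqref{eq mod bound}.

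The new input is the regime $m<k$. Since $\Delta_k$ is frequency-localized at $\sim 2^k$, I would write $\Delta_k(J_m(u))=\Delta_k\langle\nabla\rangle^{-\sigma}\big(\langle\nabla\rangle^{\sigma}J_m(u)\big)$ and use that $\Delta_k\langle\nabla\rangle^{-\sigma}$ acts on any mixed-norm Lebesgue space with operator norm $\lesssim 2^{-\sigma k}$. It then remains to estimate $\|\langle\nabla\rangle^{\sigma}[(P_{m+i}u)^{7/3}]\|_{L^1_xL^2_{y,T}}$ for $i\in\{0,1\}$. Because $t\mapsto t^{7/3}$ is of class $C^{2,1/3}$, for every $\sigma\le 2$ the fractional Leibniz/chain rule for compositions applies and expands this into a finite sum of terms, each the product of a derivative-free factor $(P_{m+i}u)^{a}$ with $a\in\{1/3,4/3\}$ and factors carrying in total at most $\sigma$ derivatives on $P_{m+i}u$ — the model cases being $(P_{m+i}u)^{4/3}\langle\nabla\rangle^{\sigma}(P_{m+i}u)$ and, when $\sigma>1$, $(P_{m+i}u)^{1/3}(\nabla P_{m+i}u)\langle\nabla\rangle^{\sigma-1}(\nabla P_{m+i}u)$. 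I would bound each term by Hölder's inequality, always placing the low-power factor $(P_{m+i}u)^{a}$ in an $L$-type norm controlled by $\|u\|_{X^s_T}$ via \S 3.1, and the remaining factor — frequencies $\lesssim 2^{m+1}$, carrying $\le\sigma$ derivatives — in $L^{p_1}_{x,y,T}$, where a Littlewood--Paley decomposition gives $\sum_{m'\le m+1}2^{\sigma m'}\|\Delta_{m'}u\|_{L^{p_1}_{x,y,T}}$. Collecting, $\|\Delta_k(J_m(u))\|_{L^1_xL^2_{y,T}}\lesssim\|u\|_{X^s_T}^{4/3}\sum_{m'\le m+1}2^{\sigma(m'-k)}\|\Delta_{m'}u\|_{L^{p_1}_{x,y,T}}$; summing over $0\le m<k$ and interchanging the two sums turns the counting of admissible $m$'s into the polynomial weight $(k-m)_+$ and reproduces the second sum in \eqref{eq mod bound}.

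I expect the genuine difficulty, and the reason for the slightly clumsy exponent $\min(2,\tfrac14+s^-)$, to be the calibration of how large $\sigma$ may be, where two ceilings compete. The nonlinearity $t\mapsto t^{7/3}$ is only $C^{2,1/3}$, so once $\sigma>2$ the surviving power $7/3-\sigma$ drops below $1/3$ and the harmless placement of the low-power factor in an $L$-type norm fails; this gives the ceiling $2$, binding for $s\ge 7/4$. For $1\le s<7/4$ the binding ceiling is instead set by how much smoothing the $X^s_T$-norms actually encode for the low-power composition factors and for the Strichartz/maximal pieces: the maximal part \eqref{eq norm max} carries only $s-1-\eps$ derivatives and hence degenerates exactly at $s=1$, and matching the Hölder exponents against the \S 3.1 interpolation (which already forces $p_1<12/5$ and $s>3/4$) leaves only $\sigma<\tfrac14+s$ derivatives to spare. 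The care needed in the write-up is therefore mostly bookkeeping: organizing the fractional Leibniz expansion so that $(P_{m+i}u)^{1/3}$ (or $(P_{m+i}u)^{4/3}$) is never differentiated and always lands in the $L$-type norm, and checking that with $\sigma$ at this threshold every exponent produced is strictly on the convergent side.
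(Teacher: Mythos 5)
Your overall architecture matches the paper's: the same telescoping decomposition, the identical treatment of the modes $m\ge k$, and for $m<k$ the idea of trading the frequency localization of $\Delta_k$ for derivatives falling on $(P_{m+i}u)^{7/3}$. (The paper applies exactly $\nabla^2$, gains $2^{-2k}$, and expands by the ordinary chain rule into $c\,(P_{m+i}u)^{4/3}\nabla^2 P_{m+i}u + c'\,(P_{m+i}u)^{1/3}|\nabla P_{m+i}u|^2$; your first model term is the analogue of the first of these and is handled correctly, yielding the full gain $2^{2(m'-k)}$.) The gap is in the second model term, where the derivatives are split between two gradient factors. Your prescription --- put the underived factor $(P_{m+i}u)^{1/3}$ in the $L$-type norm and ``the remaining factor'' in $L^{p_1}_{x,y,T}$ --- does not close: the H\"older exponents do not match (since $\tfrac{1}{4p_1'}+\tfrac{1}{p_1}<\tfrac1{p_1'}+\tfrac1{p_1}=1$, a product of an $L^{4p_1'}_x$ function with an $L^{p_1}_x$ function does not land in $L^1_x$), and the ``remaining factor'' is a \emph{product} of two derivative-bearing functions, whose $L^{p_1}$ norm is not of the form $\sum_{m'\le m}2^{\sigma m'}\|\Delta_{m'}u\|_{L^{p_1}}$. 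A further H\"older is unavoidable, and it forces exactly one of the two gradient factors into the $L$-slot alongside $(P_mu)^{1/3}$. This is the crux of the paper's argument: one bounds $\|(\nabla P_mu)(P_mu)^{1/3}\|_{L^{p_1'}_xL^{\tilde p_1}_{y,T}}\le\|u\|_L^{1/3}\|\nabla P_mu\|_{L}$ and must then estimate $\|\nabla P_mu\|_{L}\lesssim 2^{(3/4+(1-s)+\eps)k}\|u\|_{X^s_T}$ by rerunning the \S 3.1 interpolation with one extra derivative; it is this loss, combined with $2^{-2k}$ and the single $2^{m'}$ from the $L^{p_1}$ factor, that produces the exponent $\tfrac14+s^-$. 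In your write-up the ceiling $\tfrac14+s$ is asserted in the calibration paragraph but is never produced by the estimates you describe --- as stated, they would give the full gain $2^{\sigma(m-k)}$ with $\sigma=2$ for every $s>3/4$, so the mechanism and the claimed exponent are inconsistent.

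A secondary concern: for $s\in[1,2)$ your $\sigma=\min(2,\tfrac14+s^-)$ lies in $(5/4,2]$, and a one-term fractional chain rule of the form $\|\langle\nabla\rangle^\sigma F(u)\|\lesssim\|F'(u)\|\cdot\|\langle\nabla\rangle^\sigma u\|$ is not available for $\sigma>1$; the correction terms it generates are precisely of the $u^{1/3}|\nabla u|^2$ type above, and the relevant estimates would have to be proved in the anisotropic mixed norms $L^{p}_xL^{q}_{y,T}$, where the standard references do not apply off the shelf. The paper sidesteps this entirely by applying the integer operator $\nabla^2$ and the classical chain rule (legitimate since $t\mapsto t^{7/3}$ is $C^2$), which is the cleaner route; the cap at the non-integer value $\tfrac14+s^-$ then emerges not from choosing a fractional $\sigma$, but from the cost of measuring $\nabla P_mu$ in the $L$-norm.
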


\begin{proof} The outline of the proof is exactly the same as in that of Lemma \ref{lemma nonlinear 1}. If we write $\Delta_k(u^{7/3}) = \sum_{m \ge 0} \Delta_k(J_m u)$ as before, the same analysis 
as before yields that, for $m \ge k,$ 
\[
\| \Delta_k (J_m u) \|_{L^1_xL^2_{y,T}} \lesssim \|u\|_L^{4/3} \sum_{m \ge k} \|\Delta_m u\|_{L^{p_1}_{x,y,T}}. 
\]
On the other hand, for the frequencies $m < k,$ we write 
\begin{align*}
\| \Delta_k (J_m u) \|_{L^1_xL^2_{y,T}} &\lesssim 2^{-2k} \sum_{i=0,1} \| \nabla^2 (P_{m+i} u)^{7/3} \|_{L^1_xL^2_{y,T}} \cr 
					\lesssim 2^{-2k} \sum_{i=0,1} & \left(\| (\nabla^2 P_{m+i}u) (P_{m+i} u)^{4/3}\|_{L^1_xL^2_{y,T}}  + \| |\nabla P_{m+i} u|^2 (P_{m+i} u)^{1/3}\|_{L^1_x L^2_{y,T}}\right). \cr 
					=: I_1(k,m) +  & I_2(k,m).  \cr  
\end{align*}
In order to analyze the first term, we use H\"older's inequality in conjunction with Young's convolution and Bernstein's inequality again. This implies that 
\[
I_1(k,m) \lesssim \left(\sum_{m' \le m} 2^{2(m'-k)} \|\Delta_{m'} u \|_{L^{p_1}_{x,y,T}}\right) \|u\|_L^{4/3}.
\]
Using Fubini shows that this term can be absorbed into the right hand side of \eqref{eq mod bound}. Now, for the second term, we use H\"older's inequality for $L^{p_1'}_x L^{\tilde{p_1}}_{y,T}$ with $(\nabla P_{m+i} u) (P_{m+i}u)^{1/3}$ 
and $L^{p_1}_{x,y,T}$ with $\nabla P_{m+i} u.$ Further uses of triangle inequality and Bernstein's inequality yield
\begin{equation}\label{eq second term} 
I_2(k,m) \lesssim 2^{-2k} \left( \sum_{m' \le m} 2^{m'} \|\Delta_{m'} u\|_{L^{p_1}_{x,y,T}}\right) \| (\nabla P_{m} u) (P_{m}u)^{1/3}\|_{L^{p_1'}_x L^{\tilde{p_1}}_{y,T}}.
\end{equation}
Another use of H\"older's inequality shows that the right-hand side above is bounded by 
\[
C 2^{-2k} \left( \sum_{m' \le m} 2^{m'} \|\Delta_{m'} u\|_{L^{p_1}_{x,y,T}}\right) \|u\|_{L}^{1/3} \|\nabla P_m u\|_{L^{4(p_1)'/3}_x L^{4\tilde{p_1}/3}_{y,T}}.
\]
We focus on the last factor above. An analysis identical to the one employed in \S 3.1 implies that 
\[
\|\nabla P_m u\|_{L^{4(p_1)'/3}_x L^{4\tilde{p_1}/3}_{y,T}} \lesssim \sum_{m' \le m} 2^{\left[\left(\frac{12-5p_1}{8p_1}\right)-(s+2\theta-1-\eps) + 1\right] m'} \| \Delta_{m'} u \|_{Y^s_T}.
\]
with $\theta = \frac{3}{2 \tilde{p_1}}.$ Choosing $p_1$ sufficiently close to $12/5,$ the right hand side above is bounded by 
\[
\sum_{m' \le m} 2^{\left(\frac{3}{4} +(1-s) + \eps\right)m'} \| \Delta_{m'} u \|_{Y^s_T} \lesssim 2^{\left(\frac{3}{4} +(1-s) + \eps\right)k} \|u\|_{X^s_T}.
\]
for any $s \in (1-\gamma,+\infty),$  $\gamma>0$ sufficiently small. Using \eqref{eq second term}, the upper bound above and Fubini, 
\[
\sum_{m \in \N, m \le k} I_2(k,m) \lesssim \left(\sum_{m' \le k} (k-m')  2^{\left(\frac{1}{4} + s - \eps\right)(m'-k)} \|\Delta_{m'} u\|_{L^{p_1}_{x,y,T}}\right) \|u\|_{X^s_T}^{4/3}.
\]
As this estimate can clearly be absorbed into the right-hand side of \eqref{eq mod bound}, the proof of the lemma is done. 
\end{proof}

We are now in a position to prove the following a priori bound:

\begin{lemma}\label{lemma a priori} Suppose $u \in X^s_T$ satisfies $\|u\|_{X^s_T} \le 2C_s \|u_0\|_{H^s}$ 
and $s \in (3/4,2)$. Then there is $T' = T'(\|u_0\|_s) < T$ so that 
\[
\|\Gamma^j u\|_{X^s_{T''}} \le 2C_s \|u_0\|_s,
\]
for all $T'' \in [0,T'],$ where $\{\Gamma^k u\}_{k \ge 0}$ denotes the orbit of the function $u$ under the action of $\Gamma.$ 
\end{lemma}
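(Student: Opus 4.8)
The plan is to treat this as a purely \emph{a priori} statement and to propagate the bound along the orbit $\{\Gamma^j u\}_{j\ge 0}$ by induction; unlike in \S 3, no contraction is available for $s\in[1,2)$, since Lemma \ref{lemma nonlinear 2} controls $\Delta_k(u^{7/3})$ but not a difference $\Delta_k(u^{7/3}-v^{7/3})$, so the induction will have to be run by hand. Write $\Gamma(w) = U(t)u_0 + \mathcal{D}(w)$ with $\mathcal{D}(w) := \int_0^t U(t-t')\partial_x(w^{7/3})(t')\,\mmd t'$. Two elementary facts I will use repeatedly: by \eqref{eq group bound}, $\|U(t)u_0\|_{X^s_{T''}} \le C_s\|u_0\|_{H^s}$ for every $T''\le T$; and $\|w\|_{X^s_{T''}}$ is nondecreasing in $T''$, because the four norms \eqref{eq norm sobolev}--\eqref{eq norm strichartz} are each taken over the time slab $[0,T'']$. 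In particular $\|u\|_{X^s_{T''}} \le \|u\|_{X^s_T} \le 2C_s\|u_0\|_s =: a$ for all $T''\le T$.

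The crux is the nonlinear a priori bound
\[
\|\mathcal{D}(w)\|_{X^s_{T''}} \;\le\; \widetilde{C}_s\,(T'')^{\delta}\,\|w\|_{X^s_{T''}}^{7/3}, \qquad s\in(3/4,2),
\]
for some $\delta=\delta(s)>0$. For $s\in(3/4,1)$ this is \eqref{eq final bound} of \S 3 taken with $v=0$. For $s\in[1,2)$ I would rerun the same scheme but feed it Lemma \ref{lemma nonlinear 2} in place of Lemma \ref{lemma nonlinear 1}: the retarded linear estimates reduce $\|\mathcal{D}(w)\|_{X^s_{T''}}$, exactly as in the passage from \eqref{eq nonlinear} onwards, to the $\ell^2(\N)$ norm of $2^{sk}\|\Delta_k(w^{7/3})\|_{L^1_x L^2_{y,T''}}$, and \eqref{eq mod bound} bounds this by $\|w\|_{X^s_{T''}}^{4/3}$ times a high-frequency sum $\sum_{m\ge k}\|\Delta_m w\|_{L^{p_1}_{x,y,T''}}$ plus a low-frequency sum $\sum_{m\le k}(k-m)_+2^{\min(2,\tfrac{1}{4}+s^-)(m-k)}\|\Delta_m w\|_{L^{p_1}_{x,y,T''}}$. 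The high-frequency sum, and the factor $\|w\|_{X^s_{T''}}^{4/3}$, are disposed of verbatim by the interpolation-plus-Bernstein computation of \S 3.1, which needs only $s>3/4$ (with $p_1$ taken slightly below $12/5$) and which is where the power $T^{\delta}$ comes from. For the low-frequency sum I would insert $\|\Delta_m w\|_{L^{p_1}_{x,y,T''}} \lesssim (T'')^{\delta}2^{-sm}\|\Delta_m w\|_{Y^s_{T''}}$, turning $2^{sk}$ times that sum into the discrete convolution of $m\mapsto\|\Delta_m w\|_{Y^s_{T''}}$ against the kernel $1_{j\ge0}\,j\,2^{-(\min(2,\tfrac{1}{4}+s^-)-s)j}$; the decisive point is that $\min(2,\tfrac{1}{4}+s^-)>s$ for \emph{every} $s<2$, so this kernel lies in $\ell^1(\N)$ and the discrete Young inequality delivers the bound $\lesssim(T'')^{\delta}\|w\|_{X^s_{T''}}$ in $\ell^2(\N)$. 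I expect this last point --- checking that the gain afforded by Lemma \ref{lemma nonlinear 2} strictly beats the weight $2^{sk}$ throughout $[1,2)$, and only degenerates as $s\uparrow 2$ --- to be the one genuinely new ingredient; the rest is the bookkeeping of \S 3.2.

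Granting the displayed bound, the conclusion follows by a short induction. I would set $T' := \min\{\,T/2,\; c\,\|u_0\|_s^{-\beta}\,\}$ with $\beta := \tfrac{4}{3\delta}$ and $c=c(s)>0$ small enough that $\widetilde{C}_s\,(T')^{\delta}\,a^{7/3}\le C_s\|u_0\|_s$; since $a=2C_s\|u_0\|_s$ this last inequality reads $\widetilde{C}_s(2C_s)^{7/3}(T')^{\delta}\|u_0\|_s^{4/3}\le C_s$, i.e. $(T')^{\delta}\lesssim\|u_0\|_s^{-4/3}$, so such a $c$ exists. Then for $j=0$ we have $\|\Gamma^0 u\|_{X^s_{T''}}=\|u\|_{X^s_{T''}}\le a$ for all $T''\le T'$ by the monotonicity above, and if $\|\Gamma^j u\|_{X^s_{T''}}\le a$ for all $T''\le T'$, then for every such $T''$,
\[
\|\Gamma^{j+1}u\|_{X^s_{T''}} \;\le\; \|U(t)u_0\|_{X^s_{T''}} + \|\mathcal{D}(\Gamma^j u)\|_{X^s_{T''}} \;\le\; C_s\|u_0\|_s + \widetilde{C}_s\,(T'')^{\delta}\,a^{7/3} \;\le\; 2C_s\|u_0\|_s = a,
\]
where the nonlinear term was estimated using $\|\Gamma^j u\|_{X^s_{T''}}\le a$, and the last inequality used $(T'')^{\delta}\le(T')^{\delta}$ together with the choice of $c$. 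Hence $\|\Gamma^j u\|_{X^s_{T''}}\le 2C_s\|u_0\|_s$ for all $j\ge0$ and all $T''\in[0,T']$, and since $T'\le T/2<T$ this is exactly the claimed statement.
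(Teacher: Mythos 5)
Your proposal is correct and follows essentially the same route as the paper: reduce $\|\Gamma w\|_{X^s_{T''}}$ via the retarded linear estimates to the $\ell^2$ norm of $2^{sk}\|\Delta_k(w^{7/3})\|_{L^1_xL^2_{y,T''}}$, feed in Lemma \ref{lemma nonlinear 2}, and observe that the low-frequency kernel is $\ell^1$-summable precisely because $\min(2,\tfrac14+s^-)>s$ for all $s<2$, which yields $\|\mathcal{D}(w)\|_{X^s_{T''}}\lesssim (T'')^{\delta}\|w\|_{X^s_{T''}}^{7/3}$ and then the conclusion by iteration. Your explicit induction merely spells out what the paper summarizes as ``iterating this result.''
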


\begin{proof} We notice that the result holds trivially true for $s \in (3/4,1)$ by the considerations in the previous section, and therefore we assume $s \in [1,2).$ 

We write the Duhamel formula defining $\Gamma:$
\begin{equation}\label{eq duhamel} 
\Gamma u(t) = U(t)u_0 + \int_0^t U(t-t') \partial_x (u^{7/3}) (t') \, \mmd t'. 
\end{equation}
Applying the $X^s_{T''}$ norm on both sides, we see that 
\[
\|\Gamma u\|_{X^s_{T''}} \lesssim C_s \|u_0\|_s + \left\| 2^{sk} \| \Delta_k (u^{7/3}) \|_{L^1_xL^2_{y,T''}} \right\|_{\ell^2(\N)}. 
\]
Due to Lemma \ref{lemma nonlinear 2} and the bounds from \S 3.2, we can bound this last expression by 
\[
C_s\|u_0\|_s + (T'')^{\delta} \|u\|_{X^s_{T''}}^{4/3}\left\| 2^{sk} \left( \sum_{m \in \N} \min\left(1,(k-m)_{+}2^{\min(2,\frac{1}{4}+s^{-})(m-k)}\right) \| \Delta_m u\|_{L^q_{T''}L^{p_1}_{x,y}}\right)\right\|_{\ell^2(\N)}. 
\]
Again using that $\|\Delta_m u\|_{L^q_T L^{p_1}{x,y}} \lesssim 2^{-sm}\|\Delta_m u\|_{X^s_{T''}}$ we get that the $\ell^2(\N)$ norm is, in turn, bounded by 
\[
(T'')^{\delta} \|u\|_{X^s_{T''}}^{4/3} \left( \| \|\Delta_j u\|_{X^s_{T''}} * (1_{j \le 0} 2^{sj})\|_{\ell^2(\N)} + \| \|\Delta_j u\|_{X^s_{T''}} * (1_{j \ge 0} j 2^{(s-\min(2,1/4+s^-))j})\|_{\ell^2(\N)}\right).
\]
Now, Young's convolution inequality allows us to bound the expression above by 
$$(T'')^{\delta} \|u\|_{X^s_{T''}}^{7/3}$$
if $s < 2.$ Indeed, if $s < 7/4,$ then $\min(2,1/4+s^-) = 1 + s^-,$ and thus $s - 1/4 - s^- =-1/4^+.$ If $2 > s \ge 7/4,$ we will just 
have $s-2 < 0$ in the exponent of the term being convolved. 

Therefore, if $T'=T'(\|u_0\|_s)$ is sufficiently small, as $\|u\|_{X^s_{T}}  \le 2C_s \|u_0\|_{H^s}$,  it holds that $\|\Gamma u\|_{X^s_{T'}} \le 2C_s \|u_0\|_{H^s}$ as 
well. The assertion of the lemma follows then by iterating this result. 
\end{proof}

\subsection{Weak well-posedness for $s \in [1,2)$}

The following lemma states an `almost well-posedness' for the remaining range. It will be useful in order to prove the persistence of regularity and uniqueness for initial data in $H^s, \, s \in [1,2).$ 

\begin{lemma}\label{lemma well under} Let $u_0 \in H^s(\R^3),$ with $s \in [1,2).$ Then there is $T=T(\|u_0\|_s)$ and a unique solution $u$ to \eqref{eq gZK 4/3} 
so that $u \in C([0,T] \colon H^r(\R^3)) \cap X^r_T,$ for each $r \in (3/4,s).$ 
\end{lemma}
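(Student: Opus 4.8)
The plan is to prove Lemma \ref{lemma well under} by running the Picard iteration at the lower regularity level $r \in (3/4,s)$, where Theorem \ref{thm-lwp} already applies, and then bootstrapping: use the a priori bound of Lemma \ref{lemma a priori} (valid for the full range $s \in (3/4,2)$) to show that the solution, which a priori only lives in $C([0,T']\colon H^r) \cap X^r_{T'}$, in fact lies in $X^s_{T'}$ with a time $T'$ depending only on $\|u_0\|_s$ rather than on $r$.

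More concretely, first I would fix $u_0 \in H^s$ and, for each $r \in (3/4,s)$, invoke Theorem \ref{thm-lwp} to produce a unique solution $u^{(r)} \in C([0,T_r]\colon H^r(\R^3)) \cap X^r_{T_r}$ with $T_r = T_r(\|u_0\|_r)$. Since the Sobolev spaces are nested and $\|u_0\|_r \le \|u_0\|_s$, all these $T_r$ can be taken bounded below by a single $T_0 = T_0(\|u_0\|_s) > 0$, and by uniqueness in the smaller class the $u^{(r)}$ all coincide on their common interval of existence — call this common function $u$. The content of the lemma is then to upgrade $u$ from $X^r_T$-regularity to $X^s_T$-regularity for a time $T$ controlled only by $\|u_0\|_s$. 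This is exactly where Lemma \ref{lemma a priori} enters: applying $\Gamma$ to $u$ reproduces $u$ (it is a fixed point), so if we knew $\|u\|_{X^s_{T''}} \le 2C_s\|u_0\|_s$ for small $T''$ we would be done, but a priori $u$ need not even belong to $X^s_{T''}$. The way around this is to apply Lemma \ref{lemma a priori} to the Picard iterates $\Gamma^j(U(t)u_0)$ starting from the smooth-in-$X^s$ seed $U(t)u_0$ (which lies in $X^s_{T}$ with norm $\le C_s\|u_0\|_s$ by \eqref{eq group bound}): the lemma gives uniform-in-$j$ bounds $\|\Gamma^j(U(t)u_0)\|_{X^s_{T'}} \le 2C_s\|u_0\|_s$ on a time $T' = T'(\|u_0\|_s)$. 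On the other hand, by the contraction estimate \eqref{eq final bound} at regularity level $r$, these same iterates converge in $X^r_{T'}$ to the fixed point $u$. A weak-compactness / lower-semicontinuity argument in $X^s_{T'}$ (the $L^\infty_T L^2$, $L^\infty_x L^2_{y,T}$, $L^2_x L^\infty_{y,T}$ and $L^4$ components all have the Fatou-type property under a.e. or weak limits of a bounded sequence) then forces $u \in X^s_{T'}$ with $\|u\|_{X^s_{T'}} \le 2C_s\|u_0\|_s$. Continuity in time, i.e. $u \in C([0,T']\colon H^s)$, follows from the Duhamel formula and the bounds just obtained in the standard way.

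The main obstacle I anticipate is the passage to the limit in the $X^s$ norm: unlike the $X^r$ convergence which is in norm, here we only have a bounded sequence of iterates and must extract a limit that we then identify with $u$. The maximal-function component $\|\cdot\|_{\Xstmax}$, being an $L^2_x L^\infty_{y,T}$-type quantity, is the most delicate under limiting operations, and one has to be slightly careful that weak-$*$ limits in the relevant mixed-norm spaces are compatible with the $X^r$-norm limit (they are, because $X^r$ convergence gives convergence in, say, $\mathcal{S}'(\R^4)$, which pins down the limit). A clean way to organize this is to note that each dyadic block $\Delta_k u$ is the $X^r_{T'}$-limit of $\Delta_k \Gamma^j(U(t)u_0)$ and that for a fixed $k$ the $X^s$ and $X^r$ norms of a single block are equivalent up to a $2^{(s-r)k}$ factor, so block-by-block one gets $\|\Delta_k u\|_{Y^s_{T'}} \le \liminf_j \|\Delta_k \Gamma^j(\cdots)\|_{Y^s_{T'}}$, and then summing the squares with weights $2^{2sk}$ and using the uniform bound with Fatou's lemma for series yields the claim. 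Uniqueness within $C([0,T']\colon H^r) \cap X^r_{T'}$ for each $r$ is immediate from Theorem \ref{thm-lwp}, so the solution produced is the unique one in each of those classes simultaneously, which is precisely the statement of the lemma.
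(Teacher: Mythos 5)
Your proposal is correct in substance and shares the paper's core architecture: generate the Picard iterates from the seed $U(t)u_0$, use the contraction at a fixed low regularity $\tilde{s}\in(3/4,1)$ (where Theorem \ref{thm-lwp} applies, since $\|u_0\|_{\tilde s}\le\|u_0\|_s$) to get norm convergence there, and use Lemma \ref{lemma a priori} to get uniform $X^s_{T'}$ bounds on the iterates with $T'=T'(\|u_0\|_s)$. Where you diverge is the final upgrade step. The paper simply interpolates the convergent $X^{\tilde s}_T$ norm against the bounded $X^s_T$ norm to conclude that the iterates are Cauchy in $X^r_T$ for every $r\in[\tilde s,s)$ --- which is exactly the assertion of the lemma and nothing more; the passage to $X^s_T$ itself (with the Fatou/weak-limit machinery for the $L^2_xL^\infty_{y,T}$ component) is deliberately postponed to Corollary \ref{thm continuity}. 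You instead run the lower-semicontinuity argument immediately and land in $X^s_{T'}$ directly; your block-by-block organization (norm convergence of each $\Delta_k\Gamma^j(U(t)u_0)$ at level $\tilde s$, equivalence of the $Y^{\tilde s}$ and $Y^s$ norms of a single block up to $2^{(s-\tilde s)k}$, then Fatou for the series in $k$) is sound and in fact somewhat cleaner than the weak-compactness route, since you have genuine norm convergence of the iterates rather than only weak convergence of a mollified family. The cost is that you prove more than the lemma states, essentially absorbing Corollary \ref{thm continuity}; the benefit is that you avoid the interpolation-of-$X^r$-norms step. Two small inaccuracies to fix: Theorem \ref{thm-lwp} and the contraction estimate \eqref{eq final bound} are only available for $r\in(3/4,1)$, not for all $r\in(3/4,s)$ as your opening paragraph suggests (your actual argument does not use this, but the phrasing should be corrected); and the identification of the limit as a fixed point should be made at the level $\tilde s<1$ where $\Gamma$ is Lipschitz, or else one must invoke, as the paper does, that $\Gamma$ is still $1/3$-H\"older continuous on $X^r_T$ for $r\ge 1$. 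Uniqueness is handled exactly as in the paper, by descending to $H^{\tilde s}$ via nestedness.
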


\begin{proof} For $s \in [1,2),$ let $\tilde{s} \in (3/4,1)$ be a fixed parameter. Let $T = T(\|u_0\|_{\tilde{s}},\|u_0\|_s)$ be sufficiently small, and 
$w \in X^s_T$ satisfy that $\|w\|_{X^s_T} \le 2\min(C_{\tilde{s}},C_s) \|u_0\|_{\tilde{s}}.$ The fact that the norms$\|\cdot\|_{X^s_T}$ are increasing 
with $s$ and that we have proved Lipschitz well-posedness for $\tilde{s} \in (3/4,1)$ implies that 
\[
\| \Gamma^j w - \Gamma^k w\|_{X^{\tilde{s}}_T} \to 0 \text{ as } \min(k,j) \to \infty. 
\]
By Lemma \ref{lemma a priori}, we know that 
\[
\| \Gamma^j w - \Gamma^k w\|_{X^s_T} \le 4C_s \|u_0\|_s.
\]
By interpolating between the norms of $X^s_T$ and $X^{\tilde{s}}_T,$ we obtain that 
\[
\| \Gamma^j w - \Gamma^k w\|_{X^r_T} \to 0 \text{ as } \min(j,k) \to \infty
\]
for all $r \in [\tilde{s}, s).$ This implies that the sequence $\{\Gamma^j w\}_{j \ge 0}$ converges in $X^r_T,$ and thus it converges to a fixed point of $\Gamma,$ 
as the proof of Lemma \ref{lemma a priori} also shows that $\Gamma$ is $1/3-$H\"older continuous on $X^r_T,$  $r \in [1,2).$

Let this fixed point be $w \in X^s_T.$ The properties of $\Gamma$ imply that $w \in C([0,T] \colon H^r) \cap X^r_T.$ By the definition of $\Gamma,$ this $w$ satisfies the Duhamel formulation of \eqref{eq gZK 4/3}, 
and thus it is a solution to \eqref{eq gZK 4/3} with initial data in $H^r(\R^3).$ 

If $w_1,w_2 \in C([0,T] \colon H^r) \cap X^r_T$ are solutions to \eqref{eq gZK 4/3} for some $T>0,$ then, by the fact that the spaces $H^r$ and $X^r_T$ are nested
in $r \in \R,$ they are also strong solutions to \eqref{eq gZK 4/3} in $C([0,T] \colon H^{\tilde{s}}) \cap X^{\tilde{s}}_T,$ for some $\tilde{s} \in (3/4,1).$ As we know that uniqueness holds for that case, 
it also holds for $r \in [1,2).$ This finishes the proof. 
\end{proof} 

As a direct consequence, we are in position to prove the almost well-posedness result for $s \in [1,2)$  stated in Theorem \ref{thm weak at}.


\begin{proof}[Proof of Theorem \ref{thm weak at}] Start by considering the set of mollified initial data $u_{0,\eps} = \varphi_{\eps} * u_0$ and $u_{\eps}$ the solution given by Lemma \ref{lemma well under}. By that result, 
it holds that $u_{\eps}$ belongs to $C([0,T_{\eps,\delta}] \colon H^{s+\delta}) \cap X^{s+\delta}_{T_{\eps,\delta}},$ where $T_{\eps,\delta} = T \left(\eps^{-\delta}\|u_0\|_s\right).$ Indeed, 
it holds that 
\begin{equation}\label{eq up inter}
\|u_{\eps}\|_{X^{s+\delta}_{T'}} \le 2C_s \|u_{0,\eps}\|_{s+\delta}.
\end{equation}
whenever $T' < T(\|u_{0,\eps}\|_{s+\delta}).$ On the other hand, as $\|u_{0,\eps}\|_{s+\delta} \lesssim  \eps^{-\delta} \|u_0\|_s$ and the $T$ from Lemma \ref{lemma well under} is nonincreasing on 
the norm of $\|u_0\|_s,$ the conclusion follows. Furthermore, notice that for $\delta \sim \eps,$ we get that $T_{\eps,\delta} \sim T(\|u_0\|_s)$ is \emph{independent} of $\eps,$ like the upper bound 
in \eqref{eq up inter}.

On the other hand, for $\tilde{s} \in (3/4,1),$ the considerations in \S 3 imply that 
\begin{equation}\label{eq down inter} 
\|u_{\eps_1} - u_{\eps_2} \|_{X^{\tilde{s}}_{T''}} \le 2C_{\tilde{s}} \|u_{0,\eps_1} - u_{0,\eps_2}\|_{\tilde{s}},
\end{equation}
whenever $T'' < \tilde{T}(\|u_0\|_{\tilde{s}}).$ By \eqref{eq up inter} and the fact that the $X^s_T-$norms are increasing in $s \in \R,$ it holds that 
$\{u_{\eps}\}_{\eps > 0}$ is a bounded sequence in $X^s_{T'}, T'=T'(\|u_0\|_s).$ Interpolating \eqref{eq up inter} together with \eqref{eq down inter} then implies that $\{u_{\eps}\}_{\eps > 0}$ is a Cauchy sequence in $X^r_{T''},$ 
for all $ r \in (3/4,s)$ and all $T'' < \min(T_0(\|u_0\|_s),\tilde{T}(\|u_0\|_{\tilde{s}})).$ 

By uniqueness of the limit, the solution $u$ to the IVP \eqref{eq gZK 4/3} belongs to all $X^r_{T''}$ for such $r$ and $T''.$ Notice, moreover, that the upper bound on the 
$X^r_{T''}-$norm of the solution is uniformly bounded for such $r$ and $T''.$ This readily implies that $u \in \tilde{X}^s_T.$ 

Also, notice that $\{u_{\eps}\}_{\eps >0}$ is a bounded sequence in $C([0,T_0] \colon H^s),$ and by the Banach--Alaoglu theorem, we can suppose (passing to a subsequence if necessary) 
that $u_{\eps} \rightharpoonup u$  in $L^2([0,T] \colon H^s(\R^3)).$  By properties of weakly convergent sequences and the uniform boundedness of $u_{\eps}$ in $C([0,T] \colon H^s),$ 
we can easily conclude that $u \in L^{\infty}([0,T_0] \colon H^s(\R^3)).$ 

Finally, the continuity of the solution with respect to initial data in the $\tilde{X}^s_T$ norm can be proved as follows: let $u_0, v_0 \in H^s(\R^3), \, s \in [1,2),$ and let 
$u,v \in L^{\infty}([0,\delta] \colon H^s) \cap \tilde{X}^s_{\delta},$ for some common time $\delta >0.$ Then 
\begin{equation}\label{eq continuity} 
\|u-v\|_{X^r_{\delta}} \le \|u-u_{\eps}\|_{X^r_{\delta}} + \|u_{\eps} - v_{\eps}\|_{X^r_{\delta}} + \|v-v_{\eps}\|_{X^r_{\delta}}
\end{equation}
holds trivially by triangle's inequality. Fix then $\eta > 0$ and let $r \in [3/4+\eta,s-\eta].$ By the fact that $(u_{\eps},v_{\eps}) \to (u,v)$ in 
$X^r_{\delta} \times X^r_{\delta},$ and that we can quantify the rate of such a convergence by the interpolation of \eqref{eq up inter} and \eqref{eq down inter}, 
it holds that we can make the first and third terms in \eqref{eq continuity} uniformly small (with respect to $r \in [3/4+\eta,s-\eta]$) by choosing $\eps > 0$ sufficiently small. 

On the other hand, for fixed $\eps > 0,$ the same interpolation argument shows that the middle term $\|u_{\eps} - v_{\eps}\|_{X^r_{\delta}}$ can be bounded in such an interval of values of $r$ 
by some positive power of $\|u_{0,\eps} - v_{0,\eps}\|_{H^r} \lesssim C\|u_0 - v_0\|_{H^r}.$ If $v_0$ and $u_0$ are close, then so is the middle term of \eqref{eq continuity}. Therefore, 
\[
\int_{5/6}^s \|u-v\|_{X^r_{\delta}}^2 \, \mmd r \lesssim (s-\eta - 5/6) \times \delta + \eta \times \sup_{r \in [s-\eta,s]} \left(\|u\|_{X^r_{\delta}}^2 + \|v\|_{X^r_{\delta}}^2\right).
\]
By the uniform boundedness of the $X^r_{\delta}$ norms asserted above, the right-hand side in this last equation can be made arbitrarily small. 
\end{proof}



\begin{proof}[Proof of Corollary \ref{thm continuity}] We first prove a useful property of weak convergence: \\

\noindent\textit{Claim: If $u_{\eps} \rightharpoonup u$ in $L^2([0,T] \colon H^s(\R^3)),$ then $\Delta_k(u_{\eps}) \rightharpoonup \Delta_k(u)$ in the same space.} 
Indeed, let $v \in L^2([0,T] \colon H^{-s}(\R^3))$ be fixed. Then 
\[
\int_0^T \langle v(t), \Delta_k u_{\eps}(t) \rangle_{H^{-s},H^s} \, \mmd t = \int_0^T \langle \Delta_k v(t) , u_{\eps} (t) \rangle_{H^{-s},H^s} \, \mmd t.
\]
As $\Delta_kv \in  L^2([0,T] \colon H^{-s}(\R^3)),$ the righ-hand side of the last equation still converges to 
\[
\int_0^T \langle \Delta_k v(t), u(t) \rangle_{H^{-s},H^s} \, \mmd t = \int_0^T \langle v(t) , \Delta_k u(t) \rangle_{H^{-s},H^s} \, \mmd t
\]
as $\eps \to 0.$ This concludes the proof of this claim. \\ 

Therefore, fix $K \in \N.$ By passing to subsequences a finite number of times, we can suppose that, besides the weak convergence property given by the claim above, we have that 
$\Delta_k(u_{\eps}) \to \Delta_k(u)$ for almost every $(x,y,t) \in \R^3 \times [0,T], \,\, k =1,2,\dots,K.$ In order to bound $\|u\|_{X^s_T},$ we need to bound terms of the form 
\begin{equation}\label{eq problematic} 
\|\Delta_k(u)\|_{L^{\infty}_x L^2_{y,T}} , \,\|\Delta_k(u)\|_{L^2_x L^{\infty}_{y,T}} , \, \|\Delta_k(u)\|_{L^{\infty}_T L^2_{x,y}}, \text{ and} \, \|\Delta_k(u)\|_{L^4_{x,y,T}}.
\end{equation}
The easiest to handle is the last term in \eqref{eq problematic}. Indeed, 
\[
\int_{\R^3 \times [0,T]} |\Delta_k(u)(x,y,t)|^4 \, \mmd x \, \mmd y \, \mmd t \le \liminf_{\eps \to 0} \int_{\R^3 \times [0,T]} |\Delta_k(u_{\eps})(x,y,t)|^4 \, \mmd x \, \mmd y \, \mmd t
\]
follows directly from Fatou's lemma and the asserted pointwise convergence from before. The strategy to bound the other terms is similar, but this time we need a substitute for the $L^{\infty}$
norms appearing. Recall, for that purpose, that the normalized norms $\|\cdot\|_{\L{}^N (I,\mmd m)} = \frac{1}{m(I)^{1/N}} \| \cdot \|_{L^N(I,\mmd m)}$ are bounded by the $L^{\infty}-$norm on $I.$ 
Moreover, if $f \in L^{\infty},$ then $\|f\|_{\L{}^N} \nearrow \|f\|_{\infty}$ as $N \to \infty.$ Inspired by that, we can prove, for instance, that for fixed $R> 0, N \gg 1,$ 
\begin{equation}\label{eq approximate infinity} 
\left( \int_{\R} \left(\dashint_{B_R^2 \times [0,T]} |\Delta_k(u)(x,y,t)|^N \, \mmd y \, \mmd t \right)^{2/N} \, \mmd x \right)^{1/2} 
\end{equation}
is bounded -- by the same Fatou argument as before -- by 
\[
\liminf_{\eps \to 0} \left( \int_{\R} \left(\dashint_{B_R^2 \times [0,T]} |\Delta_k(u_{\eps})(x,y,t)|^N \, \mmd y \, \mmd t \right)^{2/N} \, \mmd x \right)^{1/2} \le \liminf_{\eps \to 0} \|\Delta_k(u_{\eps})\|_{L^2_x L^{\infty}_{y,T}}.
\]
On the other hand, taking first $N \to \infty$ in \eqref{eq approximate infinity} and using the fact that $L^N-$averages converge monotonically to the supremum and monotone convergence, then taking $R \to \infty$ and using monotone convergence again, 
we see that \eqref{eq approximate infinity} converges to $\|\Delta_k(u)\|_{L^2_x L^{\infty}_{y,T}}.$ As the other terms in \eqref{eq problematic} can be bounded in similar (in fact, simpler) 
ways, we have that, for each fixed $K \in \N,$ the bound
\[
\|P_K u\|_{X^s_T} \le \limsup_{\eps \to 0} \|u_{\eps}\|_{X^s_T} \le 2C_s\|u_0\|_s
\]
holds, for $T = T(\|u_0\|_s)$ sufficiently small, given in Theorem \ref{thm weak at}. Taking $K \to \infty$ above then implies that $u \in X^s_T, \, \|u\|_{X^s_T} \le 2C_s \|u_0\|_s.$ Writing the Duhamel formulation of 
\eqref{eq gZK 4/3} that $u$ satisfies as 
\[
u(t) - U(t)u_0 = \int_0^t U(t-t') \partial_x (u^{7/3})(t') \, \mmd t'
\]
and applying the $X^s_T-$norm on both sides and the estimates from Lemma \ref{lemma a priori}, we get continuity of $u(t)$ at $t=0.$ By the group property, continuity at all other values of $t \in [0,T]$ follow in 
the same way.
\end{proof}

Although the range $s \in (3/4,2)$ looks reasonably far away from the scaling index $s_c = 0,$ we mention that this endpoint cannot be included in the well-posedness theory, as least not if we demand that the data-to-solution 
$u_0 \mapsto u$ map is uniformly continuous. This is the content of the following proposition. 

\begin{proposition}\label{illposed} Let $s \le 0.$ Then the IVP \eqref{eq gZK 4/3} is ill-posed for initial data in $H^s(\R^3),$ in the sense that the data-to-solution map 
is not uniformly continuous. 
\end{proposition}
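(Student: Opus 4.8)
\textbf{Proof proposal for Proposition \ref{illposed}.}

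The plan is to exhibit, for each small parameter, a pair of initial data that are close in $H^s(\R^3)$, $s \le 0$, but whose corresponding solutions separate by an order-one amount in $H^s$ after an arbitrarily short time. I would build these data out of highly oscillatory, frequency-localized wave packets adapted to the dispersion relation $\omega(\xi,\eta) = \xi(\xi^2+|\eta|^2)$, in the spirit of the classical ill-posedness arguments of Kenig--Ponce--Vega and Birnir--Kenig--Ponce--Vega--Svanstedt for KdV-type equations, and their Zakharov--Kuznetsov analogues (e.g.\ \cite{LPa}, \cite{BL}). Concretely, for a large frequency $N$ and small amplitudes $a_1, a_2$, I would take $u_0^{(i)}$ to be essentially $a_i N^{-s} \phi(x/N^{\alpha}, y) e^{iN x}$ (suitably made real, and with the transverse profile chosen so that the packet is concentrated near a single point of the characteristic variety); the normalization $N^{-s}$ makes $\|u_0^{(i)}\|_{H^s} \sim a_i$, so the data stay bounded and $\|u_0^{(1)} - u_0^{(2)}\|_{H^s} \sim |a_1 - a_2|$ can be made as small as we wish. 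The point of the scaling $s \le 0$ is that, unlike in the subcritical range, one can feed in such packets at $O(1)$ cost in $H^s$ while the nonlinear interaction is \emph{not} small.

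The core of the argument is an asymptotic expansion of the solution. I would write $u^{(i)}(t) = U(t) u_0^{(i)} + \text{(Duhamel remainder)}$ and argue that, on the relevant short time scale, the solution is well-approximated by $U(t) u_0^{(i)}$ plus the first Picard iterate
\[
w^{(i)}(t) = \int_0^t U(t-t') \partial_x\big((U(t') u_0^{(i)})^{7/3}\big)\, \mmd t',
\]
with the higher-order terms genuinely lower order in $N$. The linear evolution $U(t) u_0^{(i)}$ preserves the $H^s$ norm and contributes only a phase, so it cancels out of the difference $u^{(1)} - u^{(2)}$ up to controlled errors. The first iterate, however, carries a nonlinear phase whose leading behaviour is of the form $e^{i c\, \omega'(\text{packet center})\, t \, N^{\beta}}$ times an amplitude of size comparable to $\|u_0^{(i)}\|_{H^s}^{7/3}$ in $H^s$; crucially, since $7/3 > 1$, the prefactors $a_i^{7/3}$ differ even when $a_1, a_2$ are both $O(1)$, and the phase it generates is \emph{different} for $a_1$ and $a_2$. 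Choosing $N$ so that this phase difference is of order $\pi$ at some time $t_N \to 0$, the two first iterates become essentially out of phase, so that
\[
\| u^{(1)}(t_N) - u^{(2)}(t_N) \|_{H^s} \gtrsim 1
\]
while $\|u_0^{(1)} - u_0^{(2)}\|_{H^s} \to 0$, contradicting uniform continuity. Since $t_N$ can be driven to $0$, the failure of uniform continuity occurs on every time interval $[0,T]$.

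The technical heart — and the step I expect to be the main obstacle — is making the Picard expansion rigorous at this regularity: one must show that the Duhamel remainder beyond the first iterate is negligible in $H^s$ on the time scale $t_N$, \emph{and} that the first iterate really does have the claimed leading-order phase and amplitude. At $s \le 0$ there is no small-data local theory to lean on, so I would instead work on a fixed small (but order-one in $N$) time interval, set up the contraction not in $X^s_T$ but in a norm tailored to the wave packet (tracking its frequency support of width $\sim N^{\alpha}$ in $\xi$ and its transverse spread), and carefully exploit the anisotropy of the symbol $\xi(\xi^2+|\eta|^2)$ so that the stationary-phase analysis of $U(t') u_0^{(i)}$ and of the oscillatory integral defining $w^{(i)}$ is clean. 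The fractional power $u^{7/3}$ requires an extra wrinkle: one expands $(a + \epsilon)^{7/3}$ and checks that the contributions of the rougher pieces of the wave packet (e.g.\ where it is small) do not spoil the leading term, using the same paraproduct-type decomposition as in Lemma \ref{lemma nonlinear 1}. Once the linear stationary-phase estimates and the size of the first iterate are pinned down, the rest is bookkeeping: optimize $N$, $\alpha$, and $a_1 - a_2$ to send the data distance to zero and the solution distance to a constant at times $t_N \to 0$.
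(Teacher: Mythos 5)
Your strategy is genuinely different from the paper's, and it contains a gap that I do not think is repairable along the lines you sketch. The paper avoids all approximation issues by using \emph{exact} solutions: it takes a ground state $\phi$ of $\Delta\phi - \phi + \phi^{7/3}=0$, rescales it to $\psi_c(r)=c^{3/4}\phi(c^{1/2}r)$, and uses the explicit traveling waves $u_c(z,t)=\psi_c((x-ct)^2+|y|^2)$. Two such data with speeds $c_n=n$ and $c_{n+1}=n+1$ converge to each other in $L^2$ (hence in $H^s$, $s\le 0$), while at any fixed $t>0$ the solutions have translated by different amounts $c_nt$, and Riemann--Lebesgue shows their inner product tends to $0$, so the difference has $L^2$ norm $\to\sqrt{2}\|\phi\|_2$. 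The amplitude-dependence of the \emph{translation speed} is the decoherence mechanism, and no Duhamel expansion, remainder estimate, or stationary phase is needed. This is the route of \cite{LPa} and of Birnir--Kenig--Ponce--Vega--Svanstedt, which you cite but then do not follow.

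The gap in your version is the step you yourself flag as "the main obstacle," and it is not mere bookkeeping. First, controlling the Duhamel remainder beyond the first Picard iterate at $s\le 0$ is essentially equivalent to having a perturbative local theory at that regularity, which is exactly what is unavailable (and what the proposition is partly asserting cannot exist); inventing a packet-adapted norm in which the contraction closes for the full nonlinear flow at $s\le 0$ would be a substantial theorem in itself, not a technical wrinkle. Second, the phase-rotation mechanism you invoke ("the first iterate carries a nonlinear phase $e^{ic\,\omega'\,tN^{\beta}}$ whose prefactor $a_i^{7/3}$ distinguishes the two solutions") is imported from the NLS setting, where $|u|^2u$ acting on a modulated plane wave literally produces an amplitude-dependent phase $e^{i|a|^2t}$. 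For the real, derivative, fractional nonlinearity $\partial_x(u^{7/3})$ this does not transfer: a real sign-changing packet $a\cos(Nx)\phi$ gives $(a\cos(Nx)\phi)^{7/3}$, whose Fourier support and leading-order output are not tractable the way $(e^{iNx})^k=e^{ikNx}$ is for integer $k$, and the paraproduct decomposition of Lemma \ref{lemma nonlinear 1} gives upper bounds, not the lower bounds and asymptotics your decoherence argument requires. The amplitude-dependence for KdV/ZK-type equations manifests as a translation speed, not a phase rotation, which is precisely why the soliton construction is the natural (and here, the paper's) choice.
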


\begin{proof} We only sketch the proof, as the main outline is the same as in \cite[Theorem~1.2]{LPa}. 

Indeed, fix $\phi \in H^1(\R^3)$ a positive, radial solution to 
\begin{equation}\label{ground}
\Delta \phi - \phi + \phi^{7/3} = 0.
\end{equation}
The existence of such a function is a by-product of the theory developed by Weinstein \cite{W}. Now, we let $\psi_c(r) = c^{3/4} \phi(c^{1/2} r),$  where we abuse notation and denote 
a radial function by its value at radius $r\ge0.$ The function 
\[
u_c(z,t) = \psi_c((x-ct)^2 + |y|^2), \,\, z=(x,y) \in \R^3,
\]
can be seen through an easy computation to satisfy the IVP \eqref{eq gZK 4/3} with initial datum $u_{0,c}(z) = c^{3/4} \phi(\sqrt{c}x).$
Moreover, it is easy to verify that the Fourier transform of the initial data $\widehat{u_{0,c}}(\xi,\eta) = c^{-3/4} \widehat{\phi}\left(\frac{\xi}{\sqrt{c}},\frac{\eta}{\sqrt{c}}\right).$ A simple calculation then shows that
\[
\lim_{n \to \infty} \langle u_{0,c_n},u_{0,c_{n+1}} \rangle_{L^2} = \|\phi\|_{L^2}^2,
\]
where $c_n = n, \, \forall n \ge 1.$ As we also have that $\|u_{0,c_n}\|_{L^2} = \|\phi\|_{L^2},$ we conclude that 
\[
\|u_{0,c_n} - u_{0,c_{n+1}}\|_{L^2} \to 0 \text{ as } n \to \infty.
\]
On the other hand, a similiar computation with the Fourier transform for fixed time $t>0$ yields that 
\[
\langle u_{c_n}(t), u_{c_{n+1}}(t) \rangle_{L^2} = \left(\frac{n}{n+1}\right)^{3/4} \int_{\R^3} e^{2\pi i \xi \sqrt{n}} \widehat{\phi}(\xi,\eta) \overline{\widehat{\phi}}\left(\frac{\sqrt{n}\xi}{\sqrt{n+1}},\frac{\sqrt{n}\eta}{\sqrt{n+1}}\right) \, \mmd \xi \mmd \eta.
\]
An application of the Riemann--Lebesgue lemma implies that the expression above goes to $0$ as $n \to \infty.$ By the way we defined our solutions, we have conservation of $L^2$ norm, and thus $\|u_{c_n}(t)\|_2 = \|\phi\|_2.$ This implies 
that 
\[
\lim_{n \to \infty} \|u_{c_n}(t) - u_{c_{n+1}}(t)\|_{L^2} = \sqrt{2} \|\phi\|_2.
\]
Thus the solution map is not uniformly continuous.
\end{proof}

\section{Conservation laws and global existence in $H^s$} 




\subsection{The auxiliary problem} In order to prove that the conservation laws \eqref{eq l^2 norm} and \eqref{eq energy}, we 
are going to be interested in the following smoother version of \eqref{eq gZK 4/3}:

\begin{equation}\label{eq gZK 4/3 smooth} 
\begin{cases}
\partial_t u^{\eps} + \partial_x \Delta u^{\eps} + \partial_x(F^{7/3}_{\eps}(u^{\eps})) = 0 & \text{ on } \R^3 \times \R; \cr
u^{\eps}(x,0) = u_0(x) & \text{ on } \R^3,
\end{cases}
\end{equation}
where we define $F_{\eps}^\alpha (u) = \eta_{\eps} * (u * \eta_{\eps})^{\alpha} ,$ for some smooth, even, positive function $\eta:\R^d \to \R$. We notice that this approach towards verifying the 
validity of the conservation laws is far from new; see, for instance, the pioneering work of Ginibre and Velo \cite{GV}, the quantification due to Weinstein \cite{W} and the book by Tao \cite{Tao} 
for further details on this method. \\

The first result about this equation is that solutions to \eqref{eq gZK 4/3 smooth} are smooth for smooth initial data, as the following proposition states: 

\begin{proposition}\label{prop smooth} For each $u_0 \in H^N(\R^3), \, N \gg 1,$ there exists $T=T(\eps,N,\|u\|_{H^N})$ such that the IVP \eqref{eq gZK 4/3 smooth} is locally well-posed in 
$$C([0,T] \colon H^N(\R^3)) \cap L^{\infty}([0,T] \colon W^{1,\infty}(\R^3)).$$ 
\end{proposition}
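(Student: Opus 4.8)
The plan is to treat \eqref{eq gZK 4/3 smooth} as an ordinary semilinear evolution equation whose nonlinearity, \emph{because of the double mollification by} $\eta_\eps$, is a bounded and locally Lipschitz map on $H^N(\R^3)$; the proposition then follows from the standard contraction-principle argument applied to the Duhamel formulation, together with the Sobolev embedding $H^N \hookrightarrow W^{1,\infty}$, which holds since $N \gg 1$.

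First I would isolate the structural point. Write $g(t) = t|t|^{4/3}$ (the real-variable reading of $t \mapsto t^{7/3}$, which is what is meant here since we work with real solutions), so that $F_\eps^{7/3}(u) = \eta_\eps * \bigl( g(u * \eta_\eps) \bigr)$ and the nonlinear term in \eqref{eq gZK 4/3 smooth} is $\mathcal N_\eps(u) := \partial_x \eta_\eps * \bigl( g(u*\eta_\eps) \bigr)$. I would then record three elementary facts. (i) Convolution with $\eta_\eps$ is bounded on every $H^m$, $\|u*\eta_\eps\|_{H^m} \le \|\eta\|_{L^1}\|u\|_{H^m}$, and moreover $\|u*\eta_\eps\|_{L^\infty} \le \|\eta_\eps\|_{L^2}\|u\|_{L^2} \le C_\eps\|u\|_{H^N}$. (ii) $g$ is $C^1$ with $|g'(t)| = \tfrac73|t|^{4/3}$, hence $|g(a)-g(b)| \le \tfrac73\max(|a|,|b|)^{4/3}|a-b|$ and $g(0)=0$, so $g$ maps $L^2\cap L^\infty$ into $L^2$ with $\|g(v_1)-g(v_2)\|_{L^2} \le \tfrac73(\|v_1\|_{L^\infty}+\|v_2\|_{L^\infty})^{4/3}\|v_1-v_2\|_{L^2}$ and $\|g(v)\|_{L^2} \le \|v\|_{L^\infty}^{4/3}\|v\|_{L^2}$. (iii) Since $\eta_\eps$ is smooth and rapidly decreasing, $f \mapsto \partial_x(\eta_\eps * f)$ maps $L^2(\R^3)$ boundedly into $H^N(\R^3)$ with operator norm $\le C_{\eps,N}$. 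Composing (i)--(iii) gives $\mathcal N_\eps : H^N(\R^3)\to H^N(\R^3)$ with
\[
\|\mathcal N_\eps(u_1)-\mathcal N_\eps(u_2)\|_{H^N} \le C_{\eps,N}\bigl(\|u_1\|_{H^N}^{4/3}+\|u_2\|_{H^N}^{4/3}\bigr)\|u_1-u_2\|_{H^N},
\]
and in particular $\|\mathcal N_\eps(u)\|_{H^N} \le C_{\eps,N}\|u\|_{H^N}^{7/3}$.

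Second, I would run the fixed-point argument. Let $U(t)$ be the linear group from Section~2, which is unitary on every $H^m$ since it is a Fourier multiplier of modulus $1$ commuting with $\langle\nabla\rangle^m$. Consider
\[
\Gamma_\eps(u)(t) = U(t)u_0 - \int_0^t U(t-t')\,\mathcal N_\eps(u(t'))\,\mmd t'
\]
on the ball $B = \{u \in C([0,T]\colon H^N) : \|u\|_{L^\infty_T H^N} \le 2\|u_0\|_{H^N}\}$. Unitarity of $U$ and the bounds above give $\|\Gamma_\eps(u)\|_{L^\infty_T H^N} \le \|u_0\|_{H^N} + T\,C_{\eps,N}(2\|u_0\|_{H^N})^{7/3}$ and $\|\Gamma_\eps(u_1)-\Gamma_\eps(u_2)\|_{L^\infty_T H^N} \le T\,C_{\eps,N}(2\|u_0\|_{H^N})^{4/3}\|u_1-u_2\|_{L^\infty_T H^N}$, so for $T = T(\eps,N,\|u_0\|_{H^N})$ small enough $\Gamma_\eps$ is a contraction of $B$ into itself. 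Its unique fixed point $u^\eps\in C([0,T]\colon H^N)$ solves the Duhamel equation, hence (differentiating in $t$, which lands in $H^{N-3}$) solves \eqref{eq gZK 4/3 smooth} strongly; uniqueness in $C([0,T]\colon H^N)$ and Lipschitz dependence on $u_0$ over bounded sets follow from the same estimates in the usual way. Finally, since $N\gg 1$ (it suffices that $N>5/2$), $H^N(\R^3)\hookrightarrow W^{1,\infty}(\R^3)$, so $u^\eps \in C([0,T]\colon W^{1,\infty}) \subset L^\infty([0,T]\colon W^{1,\infty})$, as claimed.

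The only mild subtlety — and thus the point to be careful about — is that $t\mapsto t^{7/3}$ is not smooth at the origin, so the standard "polynomial/analytic nonlinearity" shortcut for producing smooth solutions is not directly available; what rescues the argument is precisely the outer convolution with $\eta_\eps$, which lets us demand only the $L^2\cap L^\infty \to L^2$ local Lipschitz bound (ii) of $g$, a property needing nothing more than $g\in C^1$ with polynomially bounded derivative. Everything else is routine semilinear theory, with all constants freely depending on $\eps$ and $N$.
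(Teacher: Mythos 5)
Your argument is correct, and it reaches the conclusion by a genuinely different and more elementary route than the paper. The paper proves Proposition \ref{prop smooth} by parabolic regularization: it adds a term $\eta\,\Delta^2 w$ to \eqref{eq gZK 4/3 smooth}, solves the regularized problem, derives $H^N$ energy estimates that are uniform in $\eta$ via integration by parts and the Gronwall inequality, and then removes the regularization with a Bona--Smith limiting argument. You instead observe that the \emph{outer} convolution with $\eta_\eps$ makes the full nonlinearity $u\mapsto \partial_x\bigl(\eta_\eps*(u*\eta_\eps)^{7/3}\bigr)$ a locally Lipschitz map from $L^2$ (a fortiori from $H^N$) into $H^N$, since $f\mapsto\partial_x(\eta_\eps*f)$ absorbs both the derivative and all the Sobolev regularity at the price of $\eps$-dependent constants, which the proposition explicitly permits; a direct contraction on $C([0,T]\colon H^N)$ then closes, and $W^{1,\infty}$ membership is free from Sobolev embedding. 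For the proposition as stated this is a complete and shorter proof. What the paper's heavier route buys, and yours does not, is the a priori inequality \eqref{eq gronwall}, which controls $\|w(t)\|_{H^N}$ by $\exp\bigl(C_N(\eps)\int_0^t\|w\|_{H^1}^{4/3}\bigr)$ and is invoked again later (in the proof of Lemma \ref{lemma conserve approx}) to propagate the conservation laws; your contraction-time $T(\eps,N,\|u_0\|_{H^N})$ does not by itself yield that persistence/blow-up criterion, though one could recover it afterwards by an energy estimate on the solution you construct. Two minor points worth making explicit if you write this up: the mollifier $\eta$ should be taken Schwartz (or compactly supported) so that $\|\langle\nabla\rangle^{N}\partial_x\eta_\eps\|_{L^1}<\infty$, and the meaning of $t^{7/3}$ for real $t$ should be fixed (e.g.\ $t|t|^{4/3}$), as you note.
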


\begin{proof} The proof involves a standard parabolic regularization method. Let 
\begin{equation}\label{eq parabolic} 
\begin{cases}
\partial_t w  + \eta (\Delta^2 w) + \partial_x \Delta w + \partial_x(F^{7/3}_{\eps}(w)) = 0 & \text{ on } \R^3 \times \R; \cr
w(x,0) = u_0(x) & \text{ on } \R^3,
\end{cases}
\end{equation}
where we take $\eta >0.$ The fact that solutions of \eqref{eq parabolic} are smooth can be proved easily with the contraction principle, using the fact that we may write the Duhamel formulation 
of the equation as
\[
w(t) = e^{-\eta t \Delta^2} u_0 + \int_0^t e^{-\eta (t-t')\Delta^2} (\partial_x \Delta w + \partial_x (F_{\eps}^{7/3}u))(t') \, \mmd t'
\]
and simple $H^s$ estimates for the group $\{e^{-t\eta \Delta^2}\}_{t \in \R}$ given on the Fourier side. For more details see, for instance, \cite[Section~3.1]{Erdoganbook} or even \cite{Faminskii}. Of course, this method gives us a maximal existence time depending on $\eta>0.$ In order to remove this restriction, 
we apply $\Delta^k$ to \eqref{eq parabolic} 
and integrate the resulting equation against $\Delta^k w.$ We obtain that 
\[
\partial_t \|\Delta^k w(t)\|_2^2 = - 2 \int (\Delta^k w) \partial_x (\Delta^{k+1} w) - 2 \eta \int (\Delta^k w) (\Delta^2 w) - 2 \int \partial_x \Delta^k (F^{7/3}_{\eps} w) (\Delta^k w).
\]
The first term on the right hand side above is zero, and the second is non-positive. Therefore, we are left with bounding 
\begin{equation}\label{eq third term}
\left| \int \partial_x \Delta^k (F_{\eps}^{7/3} w) (\Delta^k w) \right|.
\end{equation}
By the way we defined the smooth nonlinearity, we obtain that $\partial_x \Delta^k (F_{\eps}^{7/3} w) = (\Delta^k (\eta_{\eps})) * \partial_x(w * \eta_{\eps})^{7/3}.$ This implies that \eqref{eq third term} 
is bounded by 
$$C_k \eps^{-2k-1} \|\Delta^k w\|_2 \|(w * \eta_{\eps})^{4/3} (\partial_x w)* \eta_{\eps} \|_2 \lesssim C_k \eps^{-2k-1} \| \Delta^k w\|_2 \|w\|_{H^1}^{7/3}.$$
Putting this together with what we had before, summing in $2k \le N$ and using first the Gagliardo--Nirenberg interpolation inequality, and then the Cauchy--Schwarz inequality plus properties of the convolution, we obtain
\[
\partial_t \|w(t)\|_{H^N}^2 \le C_{N}(\eps)\|w(t)\|_{H^N} \|w\|_{H^1}^{7/3} \le  C_N(\eps) \|w(t)\|_{H^N}^2 \|w\|_{H^1}^{4/3}.
\]
The standard Gronwall trick implies that 
\begin{equation}\label{eq gronwall}
\|w(t)\|_{H^N} \le \|u_0\|_{H^N} \cdot \exp\left(C_N(\eps) \int_0^t \|w(t')\|_{H^1}^{4/3} \, \mmd t'\right).
\end{equation}
Notice that this last equation does not depend on $\eta,$ and thus it implies that there is $T_0 = T_0(\|u_0\|_{H^N}, N,\eps)$ but not depending on $\eta$
so that $\|w(t)\|_{H^N} \le 2\|u_0\|_{H^N}$ for $t \in [0,T_0].$ This observation implies that the maximum time $T_N$ of solution of \eqref{eq parabolic} so that 
$u \in C^1([0,T_N] \colon H^N)$ for $u_0 \in H^N$ does not depend on $\eta>0.$ 

These estimates, together with the very structure of the equation \eqref{eq parabolic}, enables us to use the Bona-Smith argument \cite{BS}. This implies that, if $w^{\eta,\eps}$ denotes
the solution to \eqref{eq parabolic}, then taking $\eta \to 0,$ $w^{\eta,\eps} \to u^{\eps}$ in $H^N.$ Moreover, because of \eqref{eq gronwall}, the fact that solutions to \eqref{eq parabolic} are smooth
and the fact the maximal time does not depend on $\eta,$ it holds that solutions to \eqref{eq gZK 4/3 smooth} are also smooth for smooth enough initial data, and the problem is also well-posed on high order Sobolev 
spaces. We omit the details, as they are essentially completely contained in the references previously mentioned. 
\end{proof}

We have proved that, for each fixed $\eps >0,$ the solution of \eqref{eq gZK 4/3 smooth} preserves regularity. This will be useful for proving that the solutions to \eqref{eq gZK 4/3} satisfy the conserved quantities. 

On the other hand, in order to do so, we need to prove that solutions of \eqref{eq gZK 4/3 smooth} are well-defined for the range of Sobolev spaces in which we are working. 
This is the content of the next result. 

\begin{theorem}\label{thm smooth sol} Let $u_0 \in H^s(\R^3).$ The following assertions hold: 
\begin{enumerate}
 \item[(i)] If $s \in (3/4,1),$ then there is $T=T(\|u_0\|_s)$ \emph{independent} of $\eps$ so that \eqref{eq gZK 4/3 smooth} is well-posed in 
 $C([0,T] \colon H^s) \cap X^s_T$ with initial data $u_0.$ 
\item[(ii)] If $s \in [1,2),$ then there is $T = T(\|u_0\|_s, \eps)$ so that \eqref{eq gZK 4/3 smooth} is well-posed in 
 $C([0,T] \colon H^s) \cap X^s_T$ with initial data $u_0.$ 
\end{enumerate}
In both cases, the data-to-solution map is Lipschitz continuous.
\end{theorem}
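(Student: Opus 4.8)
## Proof proposal for Theorem \ref{thm smooth sol}

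The plan is to run the very same contraction-mapping scheme that gave Theorem \ref{thm-lwp} and Lemma \ref{lemma well under}, applied this time to the regularized Duhamel operator
\[
\Gamma_{\eps}(u) = U(t)u_0 + \int_0^t U(t-t') \partial_x\bigl(F_{\eps}^{7/3}(u)\bigr)(t') \, \mmd t',
\]
and to check that the mollification by $\eta_{\eps}$ only helps, never hurts, the nonlinear estimates. The crucial structural observation is that $F_{\eps}^{7/3}(u) = \eta_{\eps} * (\eta_{\eps} * u)^{7/3}$, so that $\Delta_k(F_{\eps}^{7/3}(u)) = (\eta_{\eps} * \Delta_k)\bigl((\eta_{\eps}*u)^{7/3}\bigr)$. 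Since $\eta_{\eps}$ is a fixed Schwartz bump with $\|\eta_{\eps}\|_{L^1} = \|\eta\|_{L^1}$, convolution with $\eta_{\eps}$ is bounded with norm $O(1)$ on every mixed-norm space $L^{p}_x L^{q}_{y,T}$ appearing in the definition of $X^s_T$ (by Minkowski's integral inequality, exactly as used inside the proof of Lemma \ref{lemma nonlinear 1}), and it commutes with the Littlewood--Paley projections. Hence Lemma \ref{lemma nonlinear 1}, Lemma \ref{lemma nonlinear 2}, and the high/low frequency analyses of \S 3.1--3.2 apply verbatim with $u^{7/3}$ replaced by $F_{\eps}^{7/3}(u)$ and with $\|u\|_L$, $\|\Delta_m u\|_{L^{p_1}_{x,y,T}}$ etc.\ replaced by the same quantities for $\eta_{\eps}*u$; the latter are all $\lesssim$ the former with constants independent of $\eps$, because $\|\eta_{\eps} * \Delta_m u\|_{L^{p_1}_{x,y,T}} \le \|\eta\|_{L^1}\|\Delta_m u\|_{L^{p_1}_{x,y,T}}$ and similarly for every factor. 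The difference estimate works the same way via the same mean-value-theorem identity ($F_{\eps}^{7/3}(u) - F_{\eps}^{7/3}(v)$ is $\eta_{\eps}$ convolved with $(\eta_{\eps}*u - \eta_{\eps}*v)$ times an integral of $(\eta_{\eps}*u + \theta\,\eta_{\eps}*(v-u))^{4/3}$).

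First I would record the $\eps$-uniform nonlinear bound: for $s \in (3/4,1)$,
\[
\left\| 2^{sk}\,\|\Delta_k(F_{\eps}^{7/3}(u) - F_{\eps}^{7/3}(v))\|_{L^1_x L^2_{y,T}} \right\|_{\ell^2(\N)} \lesssim T^{\delta}\, \|u-v\|_{X^s_T}\bigl(\|u\|_{X^s_T}^{4/3} + \|v\|_{X^s_T}^{4/3}\bigr),
\]
with implied constant independent of $\eps$ (this is precisely \eqref{eq final bound} with the extra harmless convolutions inserted). Combined with the linear bound \eqref{eq group bound} and the retarded estimates, $\Gamma_{\eps}$ is a contraction on $E_a(T)$ with $a = 2C_s\|u_0\|_{H^s}$ and $T \sim \|u_0\|_s^{-\beta}$, uniformly in $\eps$; this proves part (i), with Lipschitz dependence on the data following as in \S 3.3. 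For part (ii) one repeats the argument of Lemma \ref{lemma a priori} and Lemma \ref{lemma well under}: Lemma \ref{lemma nonlinear 2} with $F_{\eps}^{7/3}$ gives an a priori bound $\|\Gamma_{\eps}^j u\|_{X^s_{T''}} \le 2C_s\|u_0\|_s$ (constant independent of $\eps$, but only $1/3$-Hölder continuity of $\Gamma_{\eps}$), and then the interpolation-between-$X^s_T$-and-$X^{\tilde s}_T$ trick of Lemma \ref{lemma well under}, applied with the $\eps$-smoothed nonlinearity, yields convergence of the Picard orbit in $X^r_T$ for $3/4 < r < s$, hence a fixed point in $C([0,T]\colon H^s) \cap X^s_T$; uniqueness descends from the $\tilde s$ level exactly as there. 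Note in this case we do not claim $T$ is independent of $\eps$ — indeed we do not need it to be, and the statement only asserts $T = T(\|u_0\|_s,\eps)$; in fact here $T$ can even be taken independent of $\eps$ by the same a priori bound, but the weaker claim suffices.

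The main obstacle — really the only place that needs care rather than bookkeeping — is verifying that every single step of Lemma \ref{lemma nonlinear 1}/\ref{lemma nonlinear 2} and of \S 3.1--3.2 survives the insertion of $\eta_{\eps}$ with $\eps$-independent constants, in particular that the gradient-gaining steps (where one writes $2^{-k}\|\nabla[\ldots]\|$ or $2^{-2k}\|\nabla^2[\ldots]\|$ and then distributes derivatives by Leibniz) still close: here one uses $\nabla(\eta_{\eps}*f) = \eta_{\eps}*\nabla f$ to move the derivative past the outer convolution, and $\nabla P_{m+i}(\eta_{\eps}*u) = \eta_{\eps}*\nabla P_{m+i}u$ for the inner factors, so the frequency-localization gain $2^{m-k}$ (resp.\ $2^{2(m-k)}$) is unaffected and $\|\eta_{\eps}*\cdot\|_{L^1} \le \|\eta\|_{L^1}$ absorbs the rest. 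One should also note that $F_{\eps}^{7/3}(u)$ is real when $u$ is real and $\eta$ is real and even, so the real-solution framework is preserved, and that $F_{\eps}^{7/3}$ maps Schwartz functions to Schwartz functions so the density arguments defining $X^s_T$ go through. Everything else is a line-by-line transcription of Sections 3 and 4.
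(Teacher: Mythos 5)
Your treatment of part (i) matches the paper's: commute $\eta_{\eps}$ with $\Delta_k$, absorb the outer convolution by Young's inequality with $\|\eta_{\eps}\|_{L^1}=\|\eta\|_{L^1}$, and rerun the estimates of Section 3 verbatim, giving a contraction with $T$ independent of $\eps$. That part is fine.

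For part (ii) there is a genuine gap: the theorem asserts that the data-to-solution map is \emph{Lipschitz} continuous in both cases, and your route cannot deliver this for $s\in[1,2)$. You propose to repeat Lemma \ref{lemma a priori} plus the interpolation trick of Lemma \ref{lemma well under}, explicitly conceding that $\Gamma_{\eps}$ is only $1/3$-H\"older on $X^s_T$; but that argument, by construction, only yields the weak ``almost well-posedness'' conclusion of Theorem \ref{thm weak at} (convergence of the Picard orbit and continuity in the intermediate spaces $X^r_T$, $r<s$), not Lipschitz dependence in $X^s_T$. The whole point of part (ii) — and the reason $T$ is allowed to depend on $\eps$ there — is that the mollifier lets you upgrade the difference estimate to a genuinely Lipschitz one. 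Concretely, in the low-frequency regime $m<k$ of the difference analogue of Lemma \ref{lemma nonlinear 2} one writes
\[
\nabla^2\bigl(\eta_{\eps}*\bigl[(P_m(w_1*\eta_{\eps}))^{7/3}-(P_m(w_2*\eta_{\eps}))^{7/3}\bigr]\bigr)
=(\nabla\eta_{\eps})*\Bigl[\nabla\bigl[(P_m(w_1*\eta_{\eps}))^{7/3}-(P_m(w_2*\eta_{\eps}))^{7/3}\bigr]\Bigr],
\]
so that one of the two derivatives lands on $\eta_{\eps}$ at the price of a factor $C/\eps$, and only \emph{one} derivative has to be distributed over the difference of the two $7/3$-powers. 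A single derivative produces terms of the form $(P_m w_i^{\eps})^{4/3}\,\nabla P_m w_i^{\eps}$, whose difference is controlled \emph{linearly} by $\|w_1-w_2\|_L$ (since $x\mapsto x^{4/3}$ is locally Lipschitz); it is the second derivative falling on the nonlinearity — producing $|\nabla P_m u|^2(P_m u)^{1/3}$ with its merely $1/3$-H\"older factor — that destroys Lipschitz dependence in the unsmoothed problem. With this one-derivative-on-$\eta_{\eps}$ splitting you retain the $2^{2(m-k)}$ low-frequency gain, obtain a bound of the form $\tfrac{C}{\eps}\|w_1-w_2\|_L(\|w_1\|_L^{1/3}+\|w_2\|_L^{1/3})\sum_{m<k}(k-m)_+2^{2(m-k)}\|\Delta_m u\|_{L^{p_1}_{x,y,T}}$, and hence a true contraction on $E_a(T)$ with $T=T(\|u_0\|_s,\eps)$ and Lipschitz data-to-solution map. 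Without this step your proof establishes existence and uniqueness but not the stated Lipschitz continuity (and, as a secondary point, landing the fixed point in $X^s_T$ rather than only in $\bigcap_{r<s}X^r_T$ would additionally require the weak-limit/Fatou argument of Corollary \ref{thm continuity}).
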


\begin{proof}[Proof of part (i).] Define the map
\[
\Gamma_{\eps}(w) = U(t)u_0 + \int_0^t U(t-t')\partial_x (F_{\eps}^{7/3}w)(t') \, \mmd t'.
\]
Exactly in the same way as we did for the IVP \eqref{eq gZK 4/3}, it suffices to control 
\[
\left\|\int_0^t U(t-t')\partial_x (F_{\eps}^{7/3}w_1-F_{\eps}^{7/3}w_2)(t') \, \mmd t'\right\|_{X^s_T} 
\]
\[
\lesssim \left\|2^{sk} \|\Delta_k(\eta_{\eps}*((w_1 * \eta_{\eps})^{7/3}-(w_2 * \eta_{\eps}) ^{7/3}))\|_{L^1_xL^2_{y,T}} \right\|_{\ell^2(\N)}.
\]
As convolution with $\eta_{\eps}$ commutes with $\Delta_k,$ an application of Young's convolution inequality shows that the right hand side above is bounded by 
\[
\left\|2^{sk} \|\Delta_k((w_1 * \eta_{\eps})^{7/3} - (w_2 * \eta_{\eps})^{7/3})\|_{L^1_xL^2_{y,T}} \right\|_{\ell^2(\N)}.
\]
By the analysis undertaken in \S 3, the quantity above is controlled by $C T^{\delta}\|w_1 - w_2\|_{X^s_T}.$ Therefore, the same argument works for proving well-posedness in $C([0,T] \colon H^s) \cap X^s_T,$ 
$s \in (3/4,1),$ of the IVP \eqref{eq gZK 4/3 smooth}. Notice that, as none of the constants involved depended on $\eps > 0,$ the existence time does \emph{not} depend on $\eps > 0$ for such range of $s.$ \\ 

\noindent \textit{Proof of part (ii).} We follow the same path to existence as  before, but now we need an analogue of Lemma \ref{lemma nonlinear 2} for differences. We follow thus the 
outline of proof of Lemma \ref{lemma nonlinear 1}. The $m \ge k$ part can be kept without any modification. On the other hand, in order to estimate the contributions of 
terms $\| \Delta_k (J^{\eps}_m (w_1,w_2)) \|_{L^1_x L^2_{y,T}}$ with $m < k,$ we notice that 
\[
\nabla^2 (\eta_{\eps} * [ (P_m (w_1 * \eta_{\eps}))^{7/3} - (P_m (w_2 * \eta_{\eps}))^{7/3}])
\]
\[
= (\nabla \eta_{\eps}) * \left[\nabla  [ (P_m (w_1 * \eta_{\eps}))^{7/3} - (P_m (w_2 * \eta_{\eps}))^{7/3}] \right].
\]
Therefore, employing the same techniques as in both of those cases, the low-frequency contribution $\Delta_k(J^{\eps}_m(w_1,w_2))$ can be bounded by 
\[
\frac{C}{\eps} (\|w_1\|_L^{1/3} + \|w_2\|_L^{1/3}) \cdot \|w_1 - w_2\|_L \times \left( \sum_{m < k} (k-m)_+ 2^{2(m-k)} \|\Delta_m u\|_{L^{p_1}_{x,y,T}}\right).
\]
The rest of the proof can be carried out in the exact same way as before. 
\end{proof}

Notice that redoing the proof of Lemmata \ref{lemma nonlinear 1} 
in \ref{lemma nonlinear 2} yields only a $\|u-v\|_{X^s_T}^{1/3}$ factor, and thus the operator $\Gamma$ from \S 3 is only $1/3-$H\"older continuous. For that reason we need to work with the smooth versions \eqref{eq gZK 4/3 smooth} 
and then hope to be able to pass to the limit. The next corollary is a major step in that direction. 

\begin{corollary}\label{thm uniform time} Let $u_0 \in H^s(\R^3)$ for $s \in [1,2).$ Denote the solution to \eqref{eq gZK 4/3 smooth} obtained in Theorem \ref{thm smooth sol} by $u^{\eps}.$ Then $u^{\eps}$ extends to a time
$T = T(\|u_0\|_s)$ \emph{independent} of $\eps > 0.$ 
\end{corollary}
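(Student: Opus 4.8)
The plan is to compare the smooth solution $u^{\eps}$ of \eqref{eq gZK 4/3 smooth} with the genuine solution of \eqref{eq gZK 4/3} and exploit the $\eps$-uniform a priori bound we already have at the lower regularity level $\tilde{s} \in (3/4,1)$. First I would recall that, by Theorem \ref{thm smooth sol}(i), for each fixed $\tilde{s} \in (3/4,1)$ the IVP \eqref{eq gZK 4/3 smooth} is well-posed in $C([0,\tilde{T}]\colon H^{\tilde{s}}) \cap X^{\tilde{s}}_{\tilde{T}}$ on a time $\tilde{T} = \tilde{T}(\|u_0\|_{\tilde{s}})$ \emph{independent of} $\eps$, with $\|u^{\eps}\|_{X^{\tilde{s}}_{\tilde{T}}} \le 2C_{\tilde{s}}\|u_0\|_{\tilde{s}}$. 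This gives an $\eps$-uniform control at the $\tilde{s}$ level; the only thing missing is to propagate it up to the level $s \in [1,2)$, and that is exactly where the analogue of Lemma \ref{lemma a priori} for the smooth equation enters.

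The key step is therefore an $\eps$-uniform a priori estimate at regularity $s$. Mimicking the proof of Lemma \ref{lemma a priori}, I apply the $X^s_{T''}$ norm to the Duhamel formula for $\Gamma_{\eps}$ and reduce, via Young's convolution inequality (which, as in the proof of Theorem \ref{thm smooth sol}(i), lets us discard the outer convolution by $\eta_{\eps}$ and replace $F^{7/3}_{\eps}(u^{\eps})$ by $(u^{\eps}*\eta_{\eps})^{7/3}$ at the cost of absolute constants), to estimating $\left\| 2^{sk}\|\Delta_k((u^{\eps}*\eta_{\eps})^{7/3})\|_{L^1_x L^2_{y,T''}}\right\|_{\ell^2(\N)}$. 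The crucial observation is that $\|u^{\eps}*\eta_{\eps}\|_{X^s_{T''}} \le \|u^{\eps}\|_{X^s_{T''}}$ since convolution with $\eta_{\eps}$ commutes with each $\Delta_k$ and is a contraction on every space appearing in \eqref{eq norm sobolev}--\eqref{eq norm strichartz}, so Lemma \ref{lemma nonlinear 2} applied to $v = u^{\eps}*\eta_{\eps}$ together with the frequency sums of \S 3.2 yields
\[
\|\Gamma_{\eps} u^{\eps}\|_{X^s_{T''}} \le C_s\|u_0\|_s + C (T'')^{\delta} \|u^{\eps}\|_{X^s_{T''}}^{7/3},
\]
with $C, \delta$ \emph{independent of} $\eps$. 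A standard continuity/bootstrap argument in $T''$ then shows that $\|u^{\eps}\|_{X^s_{T''}} \le 2C_s\|u_0\|_s$ persists as long as $(T'')^{\delta}(2C_s\|u_0\|_s)^{4/3} \le c$ for a fixed small $c$, i.e. on a time $T = T(\|u_0\|_s)$ independent of $\eps$.

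Finally, to conclude that $u^{\eps}$ genuinely \emph{extends} (rather than merely stays bounded) to this common time $T$, I combine the $\eps$-uniform bound at level $\tilde{s}$ with this one at level $s$: since $\|u^{\eps}(t)\|_{H^s}$ is controlled on $[0,T]$ by the $X^s_T$ bound, the local existence of Theorem \ref{thm smooth sol}(ii) can be reapplied starting from any $t_0 < T$ with a step size bounded below uniformly in $\eps$ (because the local existence time there depends only on $\|u^{\eps}(t_0)\|_s \le 2C_s\|u_0\|_s$ and on $\eps$ only through a factor that is harmless once we know the $X^s$ norm does not blow up — in fact the bootstrap above already supplies the solution on all of $[0,T]$ directly, so the reapplication is only needed to identify it with the maximal solution). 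I expect the main obstacle to be bookkeeping: verifying carefully that \emph{every} constant produced by Lemma \ref{lemma nonlinear 2}, the interpolation inequalities of \S 3.1--3.2, and the convolution-commutation step is genuinely independent of $\eps$, and in particular that the contraction step of Theorem \ref{thm smooth sol}(ii) — whose time a priori depends on $\eps$ through the $1/\eps$ factor in the low-frequency term — is superseded by the $\eps$-free a priori bound above. The resolution is exactly that dichotomy: the $\eps$-dependence only ever appeared as a crutch in the contraction mapping, never in the a priori estimate, so the a priori estimate is what dictates the true lifespan.
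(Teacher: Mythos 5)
Your proposal is correct and follows essentially the same route as the paper: the paper likewise combines the $\eps$-independent a priori machinery (it invokes the fact that Theorem \ref{thm weak at} and Lemma \ref{lemma a priori} carry over verbatim to \eqref{eq gZK 4/3 smooth}, which rests on exactly your observation that convolution with $\eta_{\eps}$ commutes with $\Delta_k$ and is a contraction on all the spaces in \eqref{eq norm main}) with an iteration of the $\eps$-dependent local existence of Theorem \ref{thm smooth sol}(ii). Your write-up merely makes explicit the point the paper leaves implicit, namely that the $\eps$-dependence enters only through the contraction (difference) estimate and never through the a priori bound.
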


\begin{proof} Using Theorem \ref{thm smooth sol}, part (ii), we know that there is $T_{\eps}$ so that $u^{\eps} \in C([0,T_{\eps}] \colon H^s) \cap X^s_{T_{\eps}}.$ 
On the other hand, it is not hard to verify that Theorem \ref{thm weak at} and its proof generalize almost verbatim to solutions of \eqref{eq gZK 4/3 smooth} withouth that the time $T$ for which 
the solution belongs to $L^{\infty}([0,T] \colon H^s(\R^3))$ depends on $\eps>0.$ Therefore, by iterating Theorem \ref{thm smooth sol} part (ii), we conclude the desired assertion. 
\end{proof}

\subsection{Proof of Theorem \ref{thm conserve}} Finally, we prove that the $L^2$ norm \eqref{eq l^2 norm} is preserved by the flow of \eqref{eq gZK 4/3} for all $s \in (3/4,2),$ and that 
the energy \eqref{eq energy} is conserved for $s \in (1,2).$ 

More specifically, we already know by the results in \S 3 and Lemma \ref{lemma well under} and Theorem \ref{thm weak at} that, whenever $u_0 \in H^s(\R^3)$, then there exists a solution 
$u \in C([0,T] \colon H^s(\R^3)) \cap X^s_T, \, T=T(\|u_0\|_s),$ and this solution is unique with such properties. We will prove that 

\begin{lemma}\label{lemma sub convergence} Let $u,u^{\eps}$ be defined as before. Then, for each $s \in (3/4,1),$ there is $T=T(\|u_0\|_s)$ so that $u^{\eps} \to u$ in $X^s_T$ as $\eps \to 0$.
\end{lemma}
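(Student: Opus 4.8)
The plan is to show $u^{\eps} \to u$ in $X^s_T$ for $s \in (3/4,1)$ by a standard Picard-iteration stability argument, exploiting that the nonlinear estimates proved in Section 3 apply uniformly in $\eps$. First I would recall that both $u$ and $u^{\eps}$ solve Duhamel equations with the \emph{same} linear evolution $U(t)u_0$, namely $u = U(t)u_0 + \int_0^t U(t-t')\partial_x(u^{7/3})$ and $u^{\eps} = U(t)u_0 + \int_0^t U(t-t')\partial_x(F_{\eps}^{7/3}(u^{\eps}))$; hence
\[
u - u^{\eps} = \int_0^t U(t-t')\partial_x\bigl(u^{7/3} - F_{\eps}^{7/3}(u^{\eps})\bigr)(t')\,\mmd t'.
\]
I would split the right-hand side as $u^{7/3} - (u^{\eps})^{7/3}$ plus $(u^{\eps})^{7/3} - F_{\eps}^{7/3}(u^{\eps})$. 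For the first difference, the estimates \eqref{eq nonlinear} and \eqref{eq final bound} give, for $T=T(\|u_0\|_s)$ small,
\[
\Bigl\| \int_0^t U(t-t')\partial_x\bigl(u^{7/3}-(u^{\eps})^{7/3}\bigr)\Bigr\|_{X^s_T} \lesssim T^{\delta}\|u - u^{\eps}\|_{X^s_T}\bigl(\|u\|_{X^s_T}^{4/3} + \|u^{\eps}\|_{X^s_T}^{4/3}\bigr),
\]
and since both solutions satisfy the uniform bound $\|u\|_{X^s_T}, \|u^{\eps}\|_{X^s_T} \le 2C_s\|u_0\|_s$ (from Theorem \ref{thm smooth sol}(i) and the contraction in \S 3, whose constants did not depend on $\eps$), this term is $\le \tfrac12 \|u-u^{\eps}\|_{X^s_T}$ after possibly shrinking $T$; this piece gets absorbed into the left-hand side.

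The remaining work is then to prove that the genuine regularization error
\[
R_{\eps} := \Bigl\| \int_0^t U(t-t')\partial_x\bigl((u^{\eps})^{7/3} - \eta_{\eps}*((u^{\eps}*\eta_{\eps})^{7/3})\bigr)\Bigr\|_{X^s_T} \to 0 \quad \text{as } \eps \to 0,
\]
uniformly over the ball $\{ \|w\|_{X^s_T} \le 2C_s\|u_0\|_s\}$ (into which $u^{\eps}$ falls). I would write $(u^{\eps})^{7/3} - \eta_{\eps}*((u^{\eps}*\eta_{\eps})^{7/3})$ as $\bigl((u^{\eps})^{7/3} - (u^{\eps}*\eta_{\eps})^{7/3}\bigr) + \bigl(I - \eta_{\eps}*\bigr)((u^{\eps}*\eta_{\eps})^{7/3})$. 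For the second summand, $\|(I-\eta_{\eps}*)g\|_{L^1_xL^2_{y,T}} \to 0$ for fixed $g$ in the relevant space, and one upgrades this to a uniform-in-$w$ statement using the equicontinuity coming from the uniform $X^s_T$-bound together with the nonlinear Lemma \ref{lemma nonlinear 1} (applied with $v = u^{\eps}*\eta_{\eps}$, noting $\|u^{\eps}*\eta_{\eps}\|_{X^s_T} \le \|u^{\eps}\|_{X^s_T}$ by Young). For the first summand one again invokes Lemma \ref{lemma nonlinear 1} with the pair $(u^{\eps}, u^{\eps}*\eta_{\eps})$: this reduces $R_{\eps}$ to $T^{\delta}$ times $\|u^{\eps} - u^{\eps}*\eta_{\eps}\|_{X^s_T}$ times uniformly bounded factors, and $\|u^{\eps} - u^{\eps}*\eta_{\eps}\|_{X^s_T} \to 0$ because mollification converges to the identity strongly on each of the four component norms of $X^s_T$ (as $\eps \to 0$, at a rate that is uniform on the $X^s_T$-ball by a compactness/density argument, or simply by noting $\|(\,I - \eta_\eps*\,)w\|_{X^{s-\sigma}_T} \lesssim \eps^{\sigma}\|w\|_{X^s_T}$ for small $\sigma>0$ and then interpolating back up using the uniform bound, as in the proof of Theorem \ref{thm weak at}).

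Combining the two contributions, for $T=T(\|u_0\|_s)$ sufficiently small we obtain $\|u - u^{\eps}\|_{X^s_T} \le \tfrac12\|u-u^{\eps}\|_{X^s_T} + o_{\eps \to 0}(1)$, hence $\|u - u^{\eps}\|_{X^s_T} \to 0$, which is the claim. The main obstacle I anticipate is making the convergence of the mollification error uniform over the (moving) family $\{u^{\eps}\}$ rather than for a single fixed function: the clean way around this is the interpolation trick already used in the proof of Theorem \ref{thm weak at}, bounding $\|(I-\eta_{\eps}*)w\|_{X^{r}_T}$ by a negative power of $\eps$ lost from regularity for $r$ slightly below $s$ and then interpolating against the uniform $X^s_T$-bound; I would just need to check that each of the four norms defining $X^s_T$ tolerates a small loss of $x,y$-derivatives when convolving against $\eta_\eps$, which follows from Bernstein's inequality exactly as in \S 3.1.
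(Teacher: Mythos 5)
Your overall architecture --- subtract the two Duhamel formulas, absorb the piece controlled by $T^{\delta}\|u-u^{\eps}\|_{X^s_T}$ via the contraction estimate, and show that the remaining regularization error vanishes --- is the same as the paper's. The gap is in how you treat that regularization error. Your splitting leaves the term $(u^{\eps})^{7/3}-\eta_{\eps}*\bigl((u^{\eps}*\eta_{\eps})^{7/3}\bigr)$, which depends on the \emph{moving} family $\{u^{\eps}\}$, and neither of the two mechanisms you offer for making its decay uniform actually works. A uniform bound $\|u^{\eps}\|_{X^s_T}\le 2C_s\|u_0\|_s$ gives no equicontinuity of the frequency tails: a family of unit $X^s_T$-norm concentrated at frequencies of size $1/\eps$ has mollification error of order one, so there is no ``equicontinuity coming from the uniform $X^s_T$-bound.'' Your fallback, $\|(I-\eta_{\eps}*)w\|_{X^{s-\sigma}_T}\lesssim \eps^{\sigma}\|w\|_{X^s_T}$ followed by interpolation against the uniform $X^s_T$-bound, only produces smallness in $X^r_T$ for $r<s$; to land at $r=s$ you would need uniform control at some regularity strictly \emph{above} $s$, which is not available here. (That interpolation device is used in the proof of Theorem \ref{thm weak at} precisely because there one only claims convergence in $X^r_T$ for $r<s$, i.e.\ in $\tilde{X}^s_T$, not in $X^s_T$ itself.)

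The fix is to reorganize the decomposition so that every error term that must vanish by approximate-identity properties involves only the \emph{fixed} limit $u$: write $F_{\eps}^{7/3}(u^{\eps})-u^{7/3}$ as the sum of $\eta_{\eps}*\bigl((u^{\eps}*\eta_{\eps})^{7/3}-(u*\eta_{\eps})^{7/3}\bigr)$, of $\eta_{\eps}*\bigl((u*\eta_{\eps})^{7/3}-u^{7/3}\bigr)$, and of $\eta_{\eps}*(u^{7/3})-u^{7/3}$. The first piece is $\lesssim T^{\delta}\|u^{\eps}-u\|_{X^s_T}$ times bounded factors (Young's inequality for the outer convolution, then Lemma \ref{lemma nonlinear 1}) and is absorbed into the left-hand side; the second is $\lesssim T^{\delta}\|u*\eta_{\eps}-u\|_{X^s_T}\to 0$; the third tends to zero by dominated convergence in the Littlewood--Paley index --- the last two statements concern the single fixed function $u$, so no uniformity over a family is needed. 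This is exactly what the paper does. Equivalently, you can rescue your own splitting by triangulating $\|u^{\eps}-u^{\eps}*\eta_{\eps}\|_{X^s_T}\le 2\|u^{\eps}-u\|_{X^s_T}+\|u-u*\eta_{\eps}\|_{X^s_T}$ and absorbing the first summand, but as written the uniformity step is not justified.
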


\begin{proof} 
Using the Duhamel formulation of \eqref{eq gZK 4/3} and \eqref{eq gZK 4/3 smooth}, we can conclude that 
\[
\|u^{\eps} - u\|_{X^s_T} \lesssim \left\| \int_0^t U(t-t')\partial_x(F_{\eps}^{7/3}(u^{\eps}) - u^{7/3})(t') \, \mmd t' \right\|_{X^s_T}.
\]
Now, the estimate \eqref{eq nonlinear}, the second term on the right-hand side above is bounded by 
\begin{equation}\label{eq approx1}
\left\| 2^{sk} \|\Delta_k(\eta_{\eps}*((u^{\eps} * \eta_{\eps})^{7/3} - (u * \eta_{\eps})^{7/3}))\|_{L^1_xL^2_{y,T}} \right\|_{\ell^2(\N)} 
\end{equation}
\begin{equation}\label{eq approx2}
+ \left\| 2^{sk} \|\Delta_k(\eta_{\eps}*((u * \eta_{\eps})^{7/3} - u^{7/3}))\|_{L^1_xL^2_{y,T}} \right\|_{\ell^2(\N)} 
\end{equation}
\begin{equation}\label{eq approx3}
+ \left\| 2^{sk} \|\Delta_k(\eta_{\eps}*(u^{7/3}) - u^{7/3})\|_{L^1_xL^2_{y,T}} \right\|_{\ell^2(\N)}.
\end{equation}

The first term \eqref{eq approx1} is clearly bounded by $C T^{\delta} \|u^{\eps} - u\|_{X^s_T}$ by the methods from before, while the \eqref{eq approx2} is 
bounded by $C T^{\delta} \|(u * \eta_{\eps}) - u\|_{X^s_T},$ which converges to zero by approximate identity properties, and 
\eqref{eq approx3} converges to zero by the fact that the summand within the $\ell^2(\N)-$norm is 
pointwise bounded by $2 \times 2^{sk} \|\Delta_k(u^{7/3})\|_{X^s_T},$ and by the properties of approximate identities converges pointwise to zero, which enables us to use the dominated convergence theorem. 

This implies that $u^{\eps} \to u$ in $X^s_T \cap C([0,T] \colon H^s(\R^3)), \, s \in (3/4,1),$ as desired. 
\end{proof} 

Next, we prove the conservation law for solutions of \eqref{eq gZK 4/3 smooth}:

\begin{lemma}\label{lemma conserve approx} Let $u_0 \in H^s(\R^3), \, s \in (3/4,2),$ and $u^{\eps}$ be the solution to \eqref{eq gZK 4/3 smooth} given by the previous methods. The following assertions hold: 
\begin{enumerate}
 \item The quantity 
 \begin{equation}\label{eq conserve approx} 
 M(u^{\eps}(t)) = \|u^{\eps}(t)\|_2^2
 \end{equation}
 is constant for $t \in [0,T].$ 
 \item If, moreover, $s \in [1,2),$ we have that the quantity 
 \begin{equation}\label{eq conserve approx2}
 E_{\eps}(u^{\eps}(t)) = \| \nabla u^{\eps}(t)\|_2^2 - \frac{3}{5}\|(u^{\eps}(t) * \eta_{\eps})\|_{10/3}^{10/3}
 \end{equation}
 is constant for $t \in [0,T].$ 
\end{enumerate}

\end{lemma}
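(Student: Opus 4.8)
The plan is to establish both identities first for solutions with \emph{smooth} initial data --- where Proposition~\ref{prop smooth}, Theorem~\ref{thm smooth sol} and Corollary~\ref{thm uniform time} furnish solutions in $C([0,T]\colon H^N)\cap C^1([0,T]\colon H^{N-3})$ for $N$ as large as we like and $T=T(\|u_0\|_s)$, so that every integration by parts below is legitimate --- and then to transfer the conservation laws to arbitrary $u_0\in H^s$ by approximation, using the Lipschitz dependence of the data-to-solution map for \eqref{eq gZK 4/3 smooth}. Throughout I read $w^{7/3}$ as $|w|^{4/3}w$ (the convention implicit in the paper), which is a $C^2$ function of a smooth field and for which the chain rules $\frac{\mmd}{\mmd w}(|w|^{4/3}w)=\tfrac73|w|^{4/3}$ and $\frac{\mmd}{\mmd w}(|w|^{10/3})=\tfrac{10}{3}|w|^{4/3}w$ hold.

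For the mass, I would pair \eqref{eq gZK 4/3 smooth} with $u^{\eps}$ in $L^2_{x,y}$. The dispersive term contributes $\int u^{\eps}\,\partial_x\Delta u^{\eps}=0$ since $\partial_x\Delta$ is skew-adjoint on $L^2$ (its Fourier symbol is purely imaginary). For the nonlinear term, set $v:=u^{\eps}*\eta_{\eps}$ and use that $\eta_{\eps}$ is even and commutes with $\partial_x$ to shift one factor of $\eta_{\eps}$ onto $u^{\eps}$:
\[
\int u^{\eps}\,\partial_x\big(F^{7/3}_{\eps}(u^{\eps})\big)=\int v\,\partial_x\big(|v|^{4/3}v\big)=\tfrac{7}{10}\int\partial_x\big(|v|^{10/3}\big)=0,
\]
the last integral vanishing by the fundamental theorem of calculus in $x$ (here $v$ is smooth and decaying, so $|v|^{10/3}\to0$ as $|x|\to\infty$). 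This gives $\tfrac{\mmd}{\mmd t}\|u^{\eps}(t)\|_2^2=0$, i.e.\ \eqref{eq conserve approx}.

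For the energy I would differentiate \eqref{eq conserve approx2}. Integrating by parts, $\tfrac{\mmd}{\mmd t}\|\nabla u^{\eps}\|_2^2=-2\int\Delta u^{\eps}\,\partial_t u^{\eps}$; and, moving $\eta_{\eps}$ across once more, $\tfrac{\mmd}{\mmd t}\|v\|_{10/3}^{10/3}=\tfrac{10}{3}\int|v|^{4/3}v\,\big((\partial_t u^{\eps})*\eta_{\eps}\big)=\tfrac{10}{3}\int F^{7/3}_{\eps}(u^{\eps})\,\partial_t u^{\eps}$. Since $\tfrac35\cdot\tfrac{10}{3}=2$, putting $w:=\Delta u^{\eps}+F^{7/3}_{\eps}(u^{\eps})$ and using the equation in the form $\partial_t u^{\eps}=-\partial_x w$ we obtain
\[
\tfrac{\mmd}{\mmd t}E_{\eps}(u^{\eps}(t))=-2\int w\,\partial_t u^{\eps}=2\int w\,\partial_x w=\int\partial_x(w^2)=0,
\]
again vanishing by the fundamental theorem of calculus in $x$. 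For $u_0\in H^N$ with $N$ large this is fully rigorous, since then $u^{\eps}\in C^1([0,T]\colon H^{N-3})$, $F^{7/3}_{\eps}(u^{\eps})$ is smooth in $x$, and all the products and their $x$-derivatives are integrable.

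To reach general $u_0\in H^s$ with $s\in(3/4,2)$ (resp.\ $s\in[1,2)$ for the energy), I would fix $\eps>0$, pick $u_{0,n}\in\mathcal S(\R^3)$ with $u_{0,n}\to u_0$ in $H^s$, and note that for $n$ large, $\|u_{0,n}\|_s\le 2\|u_0\|_s$, so by Theorem~\ref{thm smooth sol} and Corollary~\ref{thm uniform time} all the solutions $u^{\eps}_n$ of \eqref{eq gZK 4/3 smooth} live on a common interval $[0,T]$, $T=T(\|u_0\|_s)$; since the $H^1$ norm of $u^{\eps}_n$ stays bounded on $[0,T]$, the higher Sobolev norm cannot blow up there (the $H^N$ Gronwall bound underlying Proposition~\ref{prop smooth}, cf.\ \eqref{eq gronwall}), so $u^{\eps}_n\in C([0,T]\colon H^N)$ and the previous step applies to each $u^{\eps}_n$. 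By the Lipschitz dependence in Theorem~\ref{thm smooth sol}, $u^{\eps}_n\to u^{\eps}$ in $C([0,T]\colon H^s)\cap X^s_T$; letting $n\to\infty$ in $M(u^{\eps}_n(t))=\|u_{0,n}\|_2^2$ yields \eqref{eq conserve approx} from $L^2$ convergence, and letting $n\to\infty$ in $E_{\eps}(u^{\eps}_n(t))=E_{\eps}(u_{0,n})$ yields \eqref{eq conserve approx2}, using $H^1$ convergence for the gradient term and Young's inequality $\|(f-g)*\eta_{\eps}\|_{10/3}\lesssim_{\eps}\|f-g\|_2$ for the $L^{10/3}$ term. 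I expect the only genuine obstacle to be this bookkeeping --- guaranteeing that the smooth approximants $u^{\eps}_n$ genuinely persist in $H^N$ up to the $\eps$-independent time $T(\|u_0\|_s)$, which is exactly where Corollary~\ref{thm uniform time} together with the Gronwall-type estimate \eqref{eq gronwall} enters --- rather than the essentially algebraic identities above.
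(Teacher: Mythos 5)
Your computation for smooth solutions is exactly the paper's: pair the equation with $u^{\eps}$ for the mass and with $\Delta u^{\eps}+F^{7/3}_{\eps}(u^{\eps})$ for the energy, using the evenness of $\eta_{\eps}$ to move one mollification across the pairing so that the nonlinear term becomes a total $x$-derivative. Where you diverge is in the passage to rough data. The paper runs an open--closed connectedness argument on the sets $S_1,S_2$ of times at which the conserved quantity equals its initial value: closedness comes from $u^{\eps}\in C([0,T]\colon H^s)$, and openness comes from mollifying the solution at an interior time $t_1$ and observing that the Gronwall bound \eqref{eq gronwall} gives a neighbourhood of validity independent of the inner mollification parameter $\delta$. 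You instead mollify the data at $t=0$, use \eqref{eq gronwall} to propagate $H^N$ regularity of the approximants over the whole $\eps$-independent interval $[0,T(\|u_0\|_s)]$, and conclude by the Lipschitz dependence in Theorem \ref{thm smooth sol}. Both are legitimate; yours is the more direct ``approximate once, pass to the limit globally'' scheme, while the paper's localization in time sidesteps the question of whether the smooth approximants survive up to the full time $T$.

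One small imprecision to repair in your version: for part (1) with $s\in(3/4,1)$ you invoke boundedness of the $H^1$ norm of $u^{\eps}_n$ on $[0,T]$, which is not available from the local theory at that regularity (the $H^1$ norms of the approximate data $u_{0,n}$ need not be uniformly bounded). The fix is immediate and in the spirit of \eqref{eq gronwall}: for fixed $\eps$, H\"older and the smoothing of the mollifier give $\|(w*\eta_{\eps})^{4/3}\,(\partial_x w)*\eta_{\eps}\|_2\lesssim_{\eps}\|w\|_2^{7/3}$, so the $H^N$ growth is controlled by the $L^2$ (equivalently $H^s$) norm of $w$, which \emph{is} uniformly bounded on $[0,T(\|u_0\|_s)]$ by the well-posedness in $X^s_T$. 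With that substitution your argument closes.
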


\begin{proof} We first note that the proposition holds whenever we are dealing with \emph{smooth} solutions to \eqref{eq gZK 4/3 smooth}. Indeed, for (1), multiplying that equation by $u^{\eps}$ and integrating on $\R^3$ yields, together
with a couple of integrations by parts, that the derivative of \eqref{eq conserve approx} is zero, which yields the claim. For (2), we integrate \eqref{eq gZK 4/3 smooth} against $\Delta u^{\eps} + F_{\eps}^{7/3}(u^{\eps})$ and integrations
by parts in the same way as previously prescribed yields the result. 

In order to pass to low regularity, we consider the sets 
\[
S_1 = \left\{ t \in [0,T] \colon \|u^{\eps}(t)\|_2^2  = \|u_0\|_2^2\right\}
\]
and
\[
S_2 = \left\{ t \in [0,T] \colon \|\nabla u^{\eps}(t)\|_2^2 - \frac{3}{5}\|u^{\eps}(t) * \eta_{\eps}\|_{10/3}^{10/3} = 
 \|\nabla u_0\|_2^2 - \frac{3}{5}\|u_0\|_{10/3}^{10/3} \right\}.
\]

As we know that $u^{\eps} \in C([0,T] \colon H^s)$ whenever $s \in (3/4,2),$ then the expression defining \eqref{eq conserve approx} can be shown to be continuous with $t,$ and thus $S_1 \subset [0,T]$ is closed. 
All we have to do it to show that it is also open. But this is a standard connectivity argument: if $t_1 \in S_1 \cap (0,T),$ then considering the IVP \eqref{eq gZK 4/3 smooth} 
with initial value $u^{\eps}(t_1) * \varphi_{\delta}$ shows that the formal manipulations that we need in order to conclude that \eqref{eq conserve approx} hold rigorously for this new IVP 
on a neighbourhood of $t_1,$ independently of $\delta,$ by \eqref{eq gronwall}. Taking a limit as $\delta \to 0$ implies the desired result. 

Similarly for $S_2,$ the expression in \eqref{eq conserve approx2} is continuous in time for $t \in [0,T],$ by Theorem \ref{thm smooth sol}, Corollary \ref{thm uniform time} and the Gagliardo--Nirenberg interpolation inequality. As the set 
$S_2 \subset [0,T]$ is closed, we just need to conclude it is open once more. But this follows for $s \in [1,2)$ by well-posedness of \eqref{eq gZK 4/3 smooth} in that range, together with and approximation argument as before. We 
skip the details.
\end{proof}

\begin{proof}[Proof of Theorem \ref{thm conserve}] \textit{Part (i):} Let first $s \in (3/4,1).$ Then, it holds that, whenever $u_0 \in H^s,$ 
\[
\|u^{\eps}(t)\|_2 = \|u_0\|_2^2, \hskip10pt  \forall \, t \in [0,T].
\]
By Theorem \ref{thm smooth sol}, the existence time $T$ above depends only on the $H^s-$norm of the initial datum. Now, by Lemma \ref{lemma sub convergence}, we have that, for such $T,$ 
\[
\|u^{\eps} - u\|_{X^s_T} \to 0 \, \Rightarrow \|u_0\|_2^2 = \|u^{\eps}(t)\|_2^2 \to \|u(t)\|_2^2 \; \text{ as }\; \eps \to 0.
\]
This finishes the proof in this case. For $s \in [1,2),$ we simply use that $H^s$ are nested, and thus we can reduce to the previous case. \\

\noindent\textit{Part (ii):} Assume first, by mollification, that $u_0 \in H^3(\R^3).$ We know, from Theorem \ref{thm weak at} applied to solutions of \eqref{eq gZK 4/3} and \eqref{eq gZK 4/3 smooth},  Corollary \ref{thm continuity} and Lemma \ref{lemma sub convergence}, 
that
\[
\|u^{\eps} - u\|_{X^s_T} \lesssim 4C_s\|u_0\|_s,
\]
whenever $s \in [1,2),$ and 
\[
\|u^{\eps} - u\|_{X^{\tilde{s}}_T} \to 0, \, \, \tilde{s} \in (3/4,1), \,
\]
as $\eps \to 0,$ and $T = T(\|u_0\|_s)$ is sufficiently small. Interpolating between these two estimates implies that $u^{\eps} \to u$ in $X^s_T \cap C([0,T] \colon H^s)$ whenever 
$s \in [1,2).$ Therefore, whenever $s \in [1,2),$ we see that 
\[
E_{\eps}(u_0) = E_{\eps}(u^{\eps}(t)) \to E(u(t))\, \text{ as } \eps \to 0 \text{ for } \, t \in [0,T]
\]
follows from the aforementioned convergence and the Gagliardo--Nirenberg interpolation inequality. The left-hand side above, however, converges by approximate identity properties to $E(u_0).$ 

Now, for general $u_0 \in H^s(\R^3), \, s \in (1,2),$ we solve \eqref{eq gZK 4/3} for initial data $u_{0,\eps} = u_0 * \eta_{\eps}.$ Using the notation from Theorem \ref{thm weak at}, it holds that
\begin{equation}\label{eq almost there}
E(u_{0,\eps}) = E(u_{\eps}(t)), \, \forall \, t \in [0,T].
\end{equation}
On the other hand, from Theorem \ref{thm weak at} itself and properties of approximate identities, the left-hand side of \eqref{eq almost there} converges to $E(u_0),$ while the right-hand side 
converges to $E(u(t)),$ for each $t \in [0,T]$ as long as $s \in (1,2).$ This concludes the proof of the result. 
\end{proof}

\subsection{Proof of Theorem \ref{thm global}} In order to prove Theorem \ref{thm global}, we need to work once more with the approximations $u_{0,\eps}$ to our initial data. Theorem \ref{thm conserve} shows that
the solutions $u_{\eps}$ satisfy the conservation of energy \eqref{eq energy} for $t \in [0,T], T = T(\|u_{0,\eps}\|_1) \gtrsim T(\|u_0\|_1).$ Therefore, for $t \in [0,T],$ 
\[
\| \nabla u_{\eps}(t)\|_2^2 \le \|u_0\|_1^2 + \frac{3}{5}\|u_{\eps}(t)\|_{10/3}^{10/3} \le \|u_0\|_1^2 + \frac{3}{5} \,C_{opt}^{10/3} \|\nabla u_{\eps}(t)\|_2^2 \|u_0\|_2^{4/3},
\]
where $C_{opt}$ denotes the best constant in the Gagliardo--Nirenberg interpolation inequality 
\[
\|w\|_{10/3} \le C_{opt} \|\nabla w\|_2^{3/5} \|w\|_2^{2/5}.
\]
Therefore, if $\|u_0\|_2 < \left(\frac{5}{3}\right)^{3/4} \frac{1}{C_{opt}^{5/2}},$ using conservation of \eqref{eq l^2 norm} for $u_{\eps},$ we get that 
\begin{equation}\label{eq uniform all}
\|u_{\eps}(t)\|_{H^1} \le C_{\|u_0\|_2,\|u_0\|_{H^1}},
\end{equation}
for all times $t$ in which $u_{\eps}$ is well-defined. 
This proves already that, by reiterating Theorem \ref{thm weak at} and Corollary \ref{thm continuity}, 
$u_{\eps} \in (C\cap L^{\infty})(\R \colon H^1(\R^3)).$ In particular, by Lemma \ref{lemma a priori}, there is another universal constant $\tilde{C}_{\|u_0\|_2,\|u_0\|_{H^1}}$ so that, for each $t_0 \in \R,$ 
\begin{equation}\label{eq uniform all 2} 
\| u_{\eps}( \cdot + t_0)\|_{X^1_T} \le \tilde{C}_{\|u_0\|_2,\|u_0\|_{H^1}},
\end{equation}
for each $T$ sufficiently small depending only on $\|u_0\|_2, \|u_0\|_{H^1}.$ Applying \eqref{eq uniform all} and 
the standard functional analysis arguments mentioned before, we get that the solution $u$ extends to all the real line as a function in $L^{\infty}(\R \colon H^s).$ In addition to it, 
by Corollary \ref{thm continuity} applied to \eqref{eq gZK 4/3} with initial data $u(t_0)$ iteratively, we get that 
\[
\|u(\cdot + t_0 )\|_{X^1_T} \le \tilde{C}_{\|u_0\|_2,\|u_0\|_{H^1}} . 
\]
This implies, in particular, that $u \in C([t_0,t_0+T] \colon H^1(\R^3)),$ and as $t_0 \in \R$ was arbitrary, it holds that $u \in (L^{\infty} \cap C)(\R \colon H^1(\R^3)),$ as desired. 

We remark that, by the results in \cite{W} (see also \cite[Chapter~6]{LP}), we have that $\frac{1}{C_{opt}} = \frac{\|\phi\|_2^{2/5}}{(5/3)^{3/10}},$ where 
$\phi \in H^1(\R^3)$ is some positive, radial solution to 
\begin{equation}\label{eq elliptic} 
\Delta \phi - \phi + \phi^{7/3} = 0.
\end{equation}
Therefore, the above arguments work whenever 
\[
\|u_0\|_2 < \|\phi\|_2. 
\]
This gives us an explicit quantification of how small the initial data has to be in order to have global existence results. 

\subsection{Consequences of the analysis of \eqref{eq gZK 4/3 smooth}: Pointwise convergence to the initial data} At last, we prove that for each $u_0 \in H^s, \, s> \frac{3}{4},$ 
the flow of \eqref{eq gZK 4/3} converges \emph{pointwise} to $u_0$ as $t \to 0.$ In order to do so, we employ classical ideas from the theory of maximal functions, which have
been recently employed in the context of nonlinear Schr\"odinger equations by Compaan, Luc\`a and Staffilani \cite{CLS}. See also \cite{ET1, ET2} for previous work by Erdo\u{g}an and Tzirakis 
on convergence results for the KdV equation and the nonlinear Schr\"odinger in one dimension. 

We begin with a standard lemma translating maximal estimates into pointwise consequences. Here we note that we will work with \eqref{eq gZK 4/3 smooth}, but we pick carefully the function $\eta$ and the 
initial data. In fact, we can choose it so that $\eta_{\eps} * f (z) = P_{1/\eps} f(z),$ with $P_N = \sum_{j \le N} \Delta_j$ defined as before, and we let $u_{0,\eps} = \eta_{\eps} * u_0$ be the 
initial data we work with. We will abuse notation and still denote by $u^{\eps}$ the solution to this new version of \eqref{eq gZK 4/3 smooth}. 

Notice that this specification does not change any of the previously discussed properties of the solutions $u^{\eps}$ to \eqref{eq gZK 4/3 smooth}.

 \begin{lemma}\label{lemma maximal} Suppose that, for $u_0 \in H^s(\R^3),$ we have that 
\[
\|u^{\eps} - u\|_{L^4_{x,y} L^{\infty}_T} \to 0 \text{ as } \eps \to 0.
\]
Then it holds that $u(z,t) \to u_0(z)$ as $t \to 0$ for almost every $z \in \R^3.$ 
\end{lemma}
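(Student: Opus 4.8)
The plan is to run the classical scheme that upgrades a maximal-function bound into a pointwise convergence statement, comparing the solution $u$ with the smooth approximations $u^{\eps}$. First I would record that the $u^{\eps}$ are genuinely regular up to the initial time: since $u_{0,\eps}=P_{1/\eps}u_0$ is frequency localized, it lies in $H^N(\R^3)$ for every $N$, and by Proposition \ref{prop smooth}, Corollary \ref{thm uniform time} and the smoothing effect of the mollified nonlinearity $F^{7/3}_{\eps}$, the solution $u^{\eps}$ belongs to $C([0,T]\colon H^N(\R^3))$ for $N$ arbitrarily large, on the common time $T=T(\|u_0\|_s)$. Choosing $N>3/2$ and using the embedding $H^N(\R^3)\hookrightarrow C_b(\R^3)$, the map $t\mapsto u^{\eps}(z,t)$ is continuous on $[0,T]$ for every $z\in\R^3$, so
\[
u^{\eps}(z,t)\longrightarrow u_{0,\eps}(z)\qquad\text{as }t\to 0,\ \text{ for every }z\in\R^3.
\]

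Next I would fix $\lambda>0$ and let $E_{\lambda}=\{z\in\R^3\colon \limsup_{t\to 0}|u(z,t)-u_0(z)|>\lambda\}$; since the set of $z$ at which pointwise convergence fails is $\bigcup_{n\ge 1}E_{1/n}$, it suffices to prove $|E_{\lambda}|=0$ for each $\lambda>0$. For a.e.\ $z$ and every $\eps>0$, the triangle inequality gives
\[
\limsup_{t\to 0}|u(z,t)-u_0(z)|\ \le\ \|(u-u^{\eps})(z,\cdot)\|_{L^{\infty}_T}\ +\ \limsup_{t\to 0}|u^{\eps}(z,t)-u_{0,\eps}(z)|\ +\ |u_{0,\eps}(z)-u_0(z)|,
\]
and the middle term vanishes by the continuity established above. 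Hence $E_{\lambda}\subset A_{\eps}\cup B_{\eps}$, with $A_{\eps}=\{z\colon \|(u-u^{\eps})(z,\cdot)\|_{L^{\infty}_T}>\lambda/2\}$ and $B_{\eps}=\{z\colon |u_{0,\eps}(z)-u_0(z)|>\lambda/2\}$. By Chebyshev's inequality and the hypothesis of the lemma,
\[
|A_{\eps}|\ \le\ \Big(\tfrac{2}{\lambda}\Big)^{4}\int_{\R^3}\|(u-u^{\eps})(z,\cdot)\|_{L^{\infty}_T}^{4}\,\dz\ =\ \Big(\tfrac{2}{\lambda}\Big)^{4}\|u-u^{\eps}\|_{L^4_{x,y}L^{\infty}_T}^{4}\ \xrightarrow[\;\eps\to 0\;]{}\ 0,
\]
while $|B_{\eps}|\le (2/\lambda)^{2}\|u_{0,\eps}-u_0\|_{L^2}^{2}\to 0$ because $P_{1/\eps}u_0\to u_0$ in $H^s(\R^3)\subset L^2(\R^3)$. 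Letting $\eps\to 0$ in $|E_{\lambda}|\le |A_{\eps}|+|B_{\eps}|$ yields $|E_{\lambda}|=0$, and taking the union over $\lambda=1/n$ completes the argument.

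I expect the only genuinely delicate point to be the first paragraph: one must make sure that the smooth solutions $u^{\eps}$ retain $H^N$ regularity, for all $N$, on the \emph{full} interval $[0,T]$ with $T$ independent of $\eps$. This is precisely where the regularizing convolution in $F^{7/3}_{\eps}$ is used, to bootstrap the local $H^N$ solutions of Proposition \ref{prop smooth} up to the $\eps$-uniform existence time supplied by Corollary \ref{thm uniform time}. Once that is in hand, the rest is soft: two applications of Chebyshev's inequality together with the maximal hypothesis and the $L^2$-convergence of the Littlewood--Paley truncations of $u_0$.
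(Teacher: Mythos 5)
Your argument is correct and is exactly the standard Chebyshev/maximal-function scheme that the paper delegates to \cite{LinaresRamos2}: smoothness of $u^{\eps}$ near $t=0$, a three-term triangle inequality, and Chebyshev applied to the $L^4_{x,y}L^{\infty}_T$ hypothesis and to $\|u_{0,\eps}-u_0\|_{L^2}$. One remark: the point you flag as delicate (propagating $H^N$ regularity of $u^{\eps}$ up to the full $\eps$-independent time $T$) is not actually needed, since the middle term only involves $\limsup_{t\to 0}$ and hence only requires continuity of $u^{\eps}$ at $t=0^+$ in $H^N$, which the short, $\eps$-dependent existence interval of Proposition \ref{prop smooth} already provides; the $L^{\infty}_T$ norm of $u-u^{\eps}$ over all of $[0,T]$ is controlled directly by the hypothesis.
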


\begin{proof} The proof is analogous to that of \cite[Lemma~3.8]{LinaresRamos2}, so we omit it. 
\end{proof}

\begin{proof}[Proof of Theorem \ref{thm pointwise}] We focus on proving the asserted convergence from Lemma \ref{lemma maximal}. Indeed, 
an analogous strategy to that of Proposition \ref{thm retarded max est} (iii) implies 
\[
\left\| \int_0^t U(t-t') \partial_x \Delta_k f(t') \, \mmd t' \right\|_{L^4_{x,y} L^{\infty}_T} \lesssim 2^{(3/4)^+ k} \| \Delta_k f\|_{L^1_x L^2_{y,T}}. 
\]
By the Duhamel formulations of \eqref{eq gZK 4/3} and \eqref{eq gZK 4/3 smooth}, we get that $\|u - u^{\eps}\|_{L^4_{x,y} L^{\infty}_T}$ is bounded by 
\begin{align}\label{eq bound point}
C_s \|(I-P_{1/\eps})u\|_{H^s}  + \left\| 2^{sj} \| \Delta_j (\eta_{\eps} * ( u^{\eps} * \eta_{\eps})^{7/3} - u^{7/3})\|_{L^1_x L^2_{y,T}}\right\|_{\ell^2_j},
\end{align}
where $s > \frac{3}{4}.$ By using the same decomposition as in Lemma \ref{lemma sub convergence}, the second term in \eqref{eq bound point} is bounded by the sum 
\[
\eqref{eq approx1} + \eqref{eq approx2} + \eqref{eq approx3} \to 0 \text{ as } \eps \to 0,
\]
where we argue exactly as in Lemma \ref{lemma sub convergence}, now keeping in mind that the correction term $\|(I-P_{1/\eps}) u_0\|_{H^s} \to 0$ as $\eps \to 0$ if 
$u_0 \in H^s(\R^3).$ This concludes the proof, and by Lemma \ref{lemma maximal} we deduce the desired pointwise convergence of the flow.
\end{proof}

\section*{Acknowledgements}
F. L. was partially supported by CNPq and FAPERJ, Brazil. J.P.G.R. acknowledges financial support from CNPq, Brazil.

\end{document}